\newtheorem{Lemma}{Lemma}[section]
\newtheorem{Theorem}{Theorem}[section]
\newtheorem{Definition}{Definition}[section]
\newtheorem{Proposition}{Proposition}[section]
\newtheorem{Remark}{Remark}[section]
\newtheorem{Corollary}{Corollary}[section]
\numberwithin{equation}{section} 
\newcommand{\LV}{\left|}
\newcommand{\RV}{\right|}
\newcommand{\LN}{\left\|}
\newcommand{\RN}{\right\|}
\newcommand{\LB}{\left[}
\newcommand{\RB}{\right]}
\newcommand{\LC}{\left(}
\newcommand{\RC}{\right)}
\newcommand{\LCB}{\left\{}
\newcommand{\RCB}{\right\}}
\newcommand{\R}{\mathbb{R}} 
\newcommand{\beq}{\begin{equation}}
\newcommand{\eeq}{\end{equation}}
\newcommand{\bes}{\begin{eqnarray}}
\newcommand{\ees}{\end{eqnarray}}
\def\bega{\begin{array}}
\def\enda{\end{array}}
\def\begi{\begin{itemize}}
\def\endi{\end{itemize}}
\def\bel{\begin{equation}\label}
\title[Lipschitz metric for the Novikov equation]{Lipschitz metric for the Novikov equation}
\author[H. Cai]{Hong Cai} 
\address{Hong Cai\newline
School of Mathematical Sciences, Xiamen University, Fujian, Xiamen, 361005, China, and School of Mathematics, Georgia Institute of Technology, Atlanta, GA 30332, USA.}
\email{caihong19890418@163.com}
\author[G. Chen]{Geng Chen} 
\address{Geng Chen \newline
Department of Mathematics, University of Kansas, Lawrence, KS 66045}
\email{gengchen@ku.edu}
\author[R. M. Chen]{Robin Ming Chen} 
\address{Robin Ming Chen \newline
Department of Mathematics, University of Pittsburgh, Pittsburgh PA 15260}
\email{mingchen@pitt.edu}
\author[Y. Shen]{Yannan Shen}
\address{Yannan Shen \newline
Department of Mathematics, California Sate University, 
Northridge, CA 91330}
\email{yannan.shen@csun.edu}
\begin{document}

\maketitle

\begin{abstract}
We consider the Lipschitz continuous dependence of solutions for the Novikov equation with respect to the initial data. In particular, we construct a Finsler type optimal transport metric which renders the solution map Lipschitz continuous on bounded set of $H^1(\R)\cap W^{1,4}(\R)$, although it is not Lipschitz continuous under the natural Sobolev metric from energy law due to the finite time gradient blowup. By an application of Thom's transversality Theorem, we also prove that when the initial data are in an open dense set of $H^1(\R)\cap W^{1,4}(\R)$, the solution is piecewise smooth. This generic regularity result helps us extend the Lipschitz continuous metric to the general weak solutions.
\end{abstract}

\section{Introduction}
Many evolutionary partial differential equations (PDEs) have the general form
\begin{equation*}
u_t + Lu = 0, \quad u(0) = u_0.
\end{equation*}
Classical well-posedness theory suggests the existence of a continuous semigroup of solutions, at least for a short time. For a large class of semi-linear PDEs, basic techniques such as Picard and Duhamel iteration can be applied to obtain Lipschitz continuity of the semigroup, that is, for any pair of solutions $u, v$, it holds
\begin{equation}\label{Lip_semigroup}
{d \over dt} \| u(t) - v(t) \| \lesssim \|u(t) - v(t) \|
\end{equation}
for a suitable (Sobolev) norm. Typical examples include the Korteweg-de Vries (KdV) equation, the nonlinear Schr\"odinger (NLS) equation, the semi-linear wave equation, and so on. 

On the other hand, a noteworthy exception is provided by many {\it quasi-linear} equations. Due to the dominating nonlinearity, the initial information of the data can determine the later dynamics in a substantial way. In particular, solutions with smooth initial data can lose regularity in finite time, and one cannot in general expect \eqref{Lip_semigroup} to hold true under the natural energy norms.

In this paper, we would like to address the issue of the Lipschitz continuity of the flow map using the following quasi-linear equation, namely the Novikov equation
\begin{equation}\label{Novikov}
 u_t-u_{xxt}+4u^2u_x=3uu_xu_{xx}+u^2u_{xxx}.
\end{equation}
This equation was derived by Novikov \cite{Nov} in a symmetry classification of nonlocal PDEs with cubic nonlinearity, and can in some sense be related to the well-known Camassa-Holm equation \cite{ch,ff}. In fact writing the Novikov equation \eqref{Novikov} in the weak form
\begin{equation}\label{weakNov}
u_t + u^2u_x + \partial_x(1 - \partial_x^2)^{-1} \LC u^3 + {3\over2} uu_x^2 \RC + (1 - \partial_x^2)^{-1} \LC {1\over2} u_x^3 \RC = 0,
\end{equation}
one may recognize the similarities with the Camassa-Holm equation, which, in a nonlocal form, reads
\begin{equation}\label{weakCH}
u_T + uu_x + \partial_x (1 - \partial_x^2)^{-1} \LC u^2 + {1\over2}u_x^2 \RC = 0.
\end{equation}

Analytically, the Novikov equation also shares many properties in common with the Camassa-Holm equation, among which the two most remarkable features are the breaking waves and peakons. From examining the weak formulation \eqref{weakNov}, a transport theory can be applied to derive the blow-up criterion which asserts that singularities are caused by the focusing of characteristics. This in combined with the $H^1$-conservation of solutions indicates that the exact blow-up scenario is in the sense of {\it wave-breaking}, i.e., the solution remains bounded but its slope becomes infinite in finite time. Some results on this issue can be found, for instance, in \cite{CGLQ,JN}.

As an example, the wave-breaking phenomenon can also be manifested by the so-called {\it multi-peakon solutions}. The Novikov multi-peakon solution takes the form
\begin{equation}\label{intro_peakon}
u(t, x) = \sum^N_{i=1} p_i(t) e^{|x - q_i(t)|}
\end{equation}
subject to the following equations of motion for the peak positions $q_i(t)$ and amplitudes $p_i(t)$ \cite{HLS,HW}
\begin{equation}\label{HT}
\begin{cases}
\displaystyle \dot{q}_i = \sum^N_{j,k = 1} p_jp_k e^{-|q_i - q_j| - |q_i - q_k|}, \\
\displaystyle \dot{p}_i = p_i \sum^N_{j,k=1} p_jp_k \textrm{sgn}(q_i - q_j) e^{-|q_i - q_j| - |q_i - q_k|}.
\end{cases}
\end{equation}
It can be seen that a single peakon always travels to the right since $\dot{q}_i = u(q_i)^2$. Therefore for a Novikov peakon pair, only over-taking collisions may take place \cite{MK}, which differs from the Camass-Holm case where head-on collisions are also possible. In a typical situation when two peakons of anti-strength ($p_1p_2 < 0$) cross each other at time $t_*$, then as $t \to t_*^-$,
\begin{equation}\label{interaction}
\begin{split}
& p_1(t) \to +\infty\ (\text{or } -\infty), \quad p_2(t) \to -\infty\ (\text{or } +\infty), \quad p_1(t) + p_2(t) \to p_*,\\
& q_1(t), q_2(t) \to q_*, \quad q_1(t) < q_2(t) \ \text{ for } t < t_*^-,
\end{split}
\end{equation}
for some $p_*, q_* \in \mathbb{R}$; see \cite{MK}. In particular we have $\|u_x\|_{L^\infty} \to \infty$. In this case, one needs to be careful with the meaning of continuing a solution beyond a collision.

In a recent work \cite{CCL}, we managed to find a way to uniquely extend the Novikov solution beyond the point of collision, such as  multi-peakon solutions, while keeping the ``total energy" conserved. Moreover, the result in \cite{CCL} applies to the general case of continuing solution after wave-breaking. Compared to the Camassa-Holm equation, the strong nonlinearity and nonlocal effects in the Novikov equation necessitates the need to work in a higher regularity space. Thanks to a higher order conservation law \eqref{2.9}, which will serve as the ``total energy", one can close all the estimates in $H^1(\mathbb{R})\cap W^{1,4}(\mathbb{R})$. This extra regularity enhancement also results in the exact conservation of the $H^1$ norm of solutions for all time, which is in contrast to the Camassa-Holm case, where there are still possible concentration of $u_x^2 dx$ \cite{BCZ,BC2,HR}. In fact, the energy concentration in the Novikov equation occurs at the level of $u_x^4 dx$.

\subsection{Main result}\label{subsec_main}
Having established the existence and uniqueness of conservative solutions, our main goal here is to show that these solutions form a continuous semigroup. Because of the conservation laws \eqref{2.8}-\eqref{2.9}, the $H^1(\mathbb{R})\cap W^{1,4}(\mathbb{R})$ norm seems to be a natural metric for the stability theory. However, the previous multi-peakon interaction reveals the opposite. From the result of \cite[Theorem 1.1]{CCL}, for each time $t$, we introduce the measure $\nu_t$ whose absolutely continuous part with respect to Lebesgue measure has density $u^4 + 2u^2u_x^2 - {1\over3}u_x^4$. Consider a time $t_*$ when a pair of peakons collide according to \eqref{interaction}. Then as $t \to t_*^-$, $\nu_t$ converges weakly to some $\nu_{t_*}$ which contains a Dirac mass at the point $q_*$. The energy conservation implies that
\begin{equation*}
\begin{split}
\int_{\mathbb{R}} \LC u^2 + u_x^2 \RC (t_*) \ dx & = \lim_{t \to t_*^-} \int_{\mathbb{R}} \LC u^2 + u_x^2 \RC (t) \ dx, \\
\int_{\mathbb{R}} \LC u^4 + 2u^2u_x^2 - {1\over3}u_x^4 \RC (t_*) \ dx - \nu_{t_*}(\{q_*\}) & = \lim_{t\to t_*^-} \int_{\mathbb{R}} \LC u^4 + 2u^2u_x^2 - {1\over3}u_x^4 \RC (t) \ dx.
\end{split}
\end{equation*}
Therefore if we choose as from above a sequence of two-peakons of anti-strength $u^\epsilon$ defined by $u^\epsilon = u(t-\epsilon, x)$, then
\begin{equation*}
\lim_{\epsilon\to0} \LC \|u(0) - u^\epsilon(0)\|_{H^1} + \|u(0) - u^\epsilon(0)\|_{W^{1,4}} \RC = 0,
\end{equation*}
but
\begin{equation}\label{intro_0}
\begin{split}
\lim_{\epsilon\to0} & \LC {1\over 3} \|u^\epsilon_x(t_*)\|^4_{L^4} - {1\over 3} \|u_{x}(t_*)\|^4_{L^4} \RC \\
& = \nu_{t_*}(\{q_*\}) - \lim_{\epsilon\to0} \int_{\mathbb{R}} \LB \LC (u^\epsilon)^4 + 2(u^\epsilon)^2(u^\epsilon)_x^2 \RC - \LC u^4 + 2u^2u_x^2 \RC \RB (t_*) \ dx = \nu_{t_*}(\{q_*\}) > 0.
\end{split}
\end{equation}
Therefore the flow is clearly discontinuous with respect to the $W^{1,4}$ norm. Numerical evidence verifying  (\ref{intro_0}) is given in the Section \ref{sec_peakon}. 

A further message one can take from this example is that the main obstacle in establishing the Lipschitz continuous dependence on initial data lies in the concentration phenomenon of certain energy density due to possible focusing of wave fronts; see Figure \ref{concentration figure}.
\begin{figure}[hb]
  \includegraphics[scale=1.3]{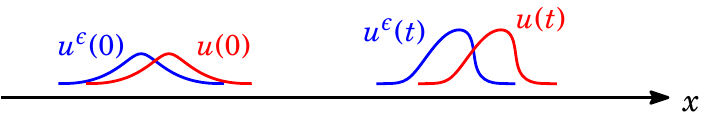}
  \caption{A sketch of the solution $u$ to the Novikov equation and a small perturbation $u^\epsilon$. Here $u^\epsilon$ is a backward shift of $u$. At time $t \to t_*^-$, both $u_x^4$ and $(u^\epsilon_x)^4$ approach to the delta functions, and hence the $W^{1,4}$ distance becomes large. A nature choice is a transport metric!}
  \label{concentration figure}
\end{figure}

Similar situation occurs in a number of other quasilinear equations, including the Hunter-Saxton equation, the Camassa-Holm equation, and the nonlinear variational wave equation. A class of optimal transport metrics was constructed to determine the minimum cost to transport an energy measure from one solution to the other, such that the corresponding flow map remains uniformly Lipschitz on bounded time intervals for this new geodesic distance; see \cite{BC2015,BF,BHR,GHR2011,GHR2013}. Compared with the previous literature, the major difficulty in dealing with the Novikov equation lies in the fact that equation \eqref{Novikov} exhibits cubic nonlinearity, which requires one to work in a stronger regularity space, and therefore changes the energy concentration nature. Hence one needs to accordingly adjust terms in the metric of the infinitesimal tangent vectors. On the other hand, two complicated non-local convolution terms make the Lipschitz estimate fairly subtle.

The main theorem in this paper can be stated as:

\begin{Theorem}\label{thm_Lip geo}
The geodesic distance $d$, defined in Definition \ref{Definition 7.2}, renders Lipschitz continuous the flow generated by the equation \eqref{2.1}. In particular, consider two initial data $u_0(x)$ and $\tilde{u}_0(x)$ which are absolute continuous and belong to $H^1(\R)\cap W^{1,4}(\R)$. Then for any $T>0$, the corresponding solutions $u(t, x)$ and $\tilde{u}(t,x)$ satisfy
\begin{equation*}
d\big(u(t),\tilde{u}(t)\big)\leq C\  d(u_0,\tilde{u}_0),
\end{equation*}
when $t\in[0,T]$, where the constant $C$ depends only on  $T$ and $H^1(\R) \cap W^{1,4}(\R)$--norm of initial data.
\end{Theorem}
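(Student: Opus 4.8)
### Proof Proposal

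The plan is to follow the by-now classical strategy for optimal transport (Finsler) metrics on solutions of wave-breaking equations, as pioneered for the Camassa--Holm and nonlinear variational wave equations, but with the modifications forced by the cubic nonlinearity and the $W^{1,4}$-level energy concentration. The essential idea is that $d(u(t),\tilde u(t))$ is defined as an infimum over paths of a length functional built from the norm of infinitesimal tangent vectors, and that this norm is designed so that it grows at most exponentially along the flow. Hence the first task is to work at the linearized level: given a smooth reference solution and a first-order perturbation $(v,w)$ (say, a shift in $u$ together with the induced shift in the energy measure $\mu$ with density $u^2+u_x^2$ and the higher energy $\nu$ with density $u^4+2u^2u_x^2-\frac13 u_x^4$), one writes down the linearized evolution equations for $(v,w)$ and for the perturbation of the energy measures. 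The key estimate will be a differential inequality
\[
\frac{d}{dt}\,\|(v,w)\|_{u(t)} \;\le\; C\big(\|u\|_{H^1\cap W^{1,4}}\big)\,\|(v,w)\|_{u(t)},
\]
where $\|\cdot\|_{u}$ is the Finsler norm attached to the point $u$. Integrating this along an arbitrary path connecting $u_0$ to $\tilde u_0$ and taking the infimum yields the theorem with $C=e^{CT}$.

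The key steps, in order, are: (i) recall from \cite{CCL} the semilinear reformulation \eqref{2.1} in Lagrangian-type variables, in which the flow is smooth and the energy variables evolve by ODEs coupled through the nonlocal convolution terms; (ii) derive the linearized system governing a generic tangent vector, carefully tracking the two nonlocal terms $\partial_x(1-\partial_x^2)^{-1}(u^3+\tfrac32 uu_x^2)$ and $(1-\partial_x^2)^{-1}(\tfrac12 u_x^3)$ — their Fréchet derivatives involve $u^2 v$, $u u_x w$, and $u_x^2 w$ type terms, all of which must be bounded in $L^1\cap L^\infty$ by the conserved norms via Young's inequality and the $H^1\cap W^{1,4}$ control; (iii) choose the weights in the infinitesimal norm $\|(v,w)\|_u$ — a weighted combination of $\|v\|_{H^1}$, $\|v\|_{W^{1,4}}$, and the total mass of the perturbation of the energy measure, with the weight depending on the base energy — so that the ``bad'' terms coming from stretching/compression of characteristics are absorbed; (iv) prove the resulting Gr\"onwall inequality on $[0,T]$; (v) pass from the infinitesimal (Finsler) estimate to the finite geodesic distance $d$ of Definition \ref{Definition 7.2} by a standard path-length argument, using that piecewise regular (or smooth) paths are dense, which is where the generic regularity result alluded to in the introduction enters to justify that $d$ is well-defined and finite on the whole space; and (vi) handle the time instants at which characteristics actually focus, showing the length functional does not jump there because the metric was built to be continuous across wave-breaking (the energy that concentrates is accounted for as transported mass rather than as a change in the $W^{1,4}$ density).

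I expect the main obstacle to be step (iii) combined with the estimates in step (ii): designing the infinitesimal metric so that \emph{every} term produced by differentiating the cubic and nonlocal nonlinearities is either of the ``good'' sign or controlled by the metric itself. Unlike the Camassa--Holm case, here one must simultaneously control a perturbation in $H^1$ \emph{and} in $W^{1,4}$, and the coupling between these two levels — e.g. a term like $u_x^2 v_x$ appearing in the $W^{1,4}$ estimate, which is quartic in the base solution's derivative — forces a delicate choice of relative weights, and forces one to exploit the exact higher-order conservation law \eqref{2.9} rather than merely an a priori bound. A secondary difficulty is the behavior of the metric at wave-breaking times: one must verify that the Finsler length of a path is lower semicontinuous and that the geodesic distance does not collapse, which requires re-expressing the $W^{1,4}$-energy in the Lagrangian variable where it stays absolutely continuous, and checking that the ``transport cost'' of the concentrating part matches the $\nu_{t_*}(\{q_*\})$ computed in \eqref{intro_0}. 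Once these are in place, the Gr\"onwall step and the passage to $d$ are routine.
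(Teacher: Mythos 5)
Your overall roadmap matches the paper's: derive a Gr\"onwall inequality for the Finsler norm of tangent vectors along smooth solutions (Lemma \ref{lem_est}), prove generic regularity via Thom's transversality to obtain dense families of piecewise-smooth paths (Theorem \ref{thm_generic}, Corollary \ref{Corollary 4.1}), extend the length estimate to piecewise regular paths using the semi-linear Lagrangian system (Theorem \ref{thm_length}), and finally define $d$ on general weak data by density (Definition \ref{Definition 7.2}). However, your step (iii) --- the actual choice of infinitesimal norm --- contains a genuine gap. You propose a weighted combination of $\|v\|_{H^1}$, $\|v\|_{W^{1,4}}$ and the total mass of the perturbed energy measures. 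This would not close. The paper's norm \eqref{2.17} is instead a weighted $L^1$ integral with four specific integrands: $|w|(1+u_x^2)^2$, $|v+u_xw|(1+u_x^2)^2$, $|v_x+u_{xx}w|(1+u_x^2)$, and $|4(u_x+u_x^3)(v_x+u_{xx}w)+(1+u_x^2)^2w_x|$, each weighted against a \emph{single} base density $\mu=(1+u_x^2)^2$, not $u^2+u_x^2$ or $u^4+2u^2u_x^2-\tfrac13u_x^4$. The crucial feature you miss is the third term: it measures the change in $\arctan u_x$, not $u_x$ itself. Because $\arctan$ saturates, this term stays finite across wave-breaking where $u_x\to\pm\infty$; a norm built on $\|v\|_{W^{1,4}}$ would blow up at the collision time $t_*$, precisely where \eqref{intro_0} shows the $W^{1,4}$ distance is discontinuous, so the Gr\"onwall estimate would fail to extend past wave-breaking and the resulting distance would degenerate.

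Moreover, the Gr\"onwall estimate in the paper relies on algebraic identities that are specific to this $L^1$ structure. For example, the proof of \eqref{2.30} rewrites $3u^2v=3u^2(v+u_yw)-3u^2u_yw$ and integrates $(u_y^3w)_y$ against $e^{-|x-y|}$ by parts so that the time derivative of each $I_\ell$ is bounded by $C(I_1+I_2+I_3+I_4)$; the weights $(1+u_x^2)^2$ and $(1+u_x^2)$ are calibrated precisely so that every term produced by differentiating the cubic nonlinearities lands back on one of the four integrands. If you replace the integrands by Sobolev norms of $v$ you lose this closure: a term like $\int u_x^2\,v_x\,dx$ generated by the nonlocal convolution cannot be bounded by $\|v\|_{H^1}+\|v\|_{W^{1,4}}$ when $u_x$ is large, because it lacks the $1/(1+u_x^2)$ damping encoded in the $\arctan$ variable. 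In short, your strategy is the right one at the level of Steps (i), (ii), (iv)--(vi), but Step (iii) requires the four-term transport-cost structure with the $\arctan$ reparametrization, which is the technical heart of the paper's Lemma \ref{lem_est}.
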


\subsection{Structure of the article}\label{subsec_plan}
We construct the metric $d$ and prove that the solution map is Lipschitz continuous under this metric in several steps.

\begin{itemize}
\item[1.] We construct a Lipschitz metric for smooth solution (Sections \ref{sec_tangent}).
\item[2.] We prove by an application of the Thom's Transversality theorem that the piecewise smooth solutions with only generic singularities are dense in $H^1\cap W^{1,4}$ (Section \ref{sec_generic regularity}).
\item[3.] We first extend the Lipschitz metric to piecewise smooth solutions with only generic singularities, and then to general weak solution by the ``density" result established in Step 2 (Section \ref{sec_metric}).
\item[4.] We compare  our metric with other metrics, such as the Sobolev metric and Kantorovich-Rubinstein or Wasserstein metric (Section \ref{sec_comparison}).
\end{itemize}

A more detailed explanation is given below.

\medskip

\paragraph{\em Step 1: Metric for smooth solutions}
We begin, in Section \ref{sec_pre}, by collecting several important local and global conservation laws of equation \eqref{Novikov}, which motivates the definition of an energy conservative solution. Then we recall the main theorem on the existence and uniqueness of such solutions established in \cite{CCL}, cf. Theorem \ref{thm_exist}.

To keep track of the cost of the energy transportation, we are led to construct the geodesic distance. That is, for two given solution profiles $u(t)$ and $u^\epsilon(t)$, we consider all possible smooth deformations/paths $\gamma^t: \theta \mapsto u^\theta(t)$ for $\theta\in [0,1]$ with $\gamma^t(0) = u(t)$ and $\gamma^t(1) = u^\epsilon(t)$, and then measure the length of these paths through integrating the norm of the tangent vector $d\gamma^t/d\theta$; see Figure \ref{homotopy figure} (a). The distance between $u$ and $u^\epsilon$ will be calculated by the optimal path length
\begin{equation*}
d\LC u(t), u^\epsilon(t) \RC = \inf_{\gamma^t}\|\gamma^t\| := \inf_{\gamma^t}\int^1_0 \| v^\theta(t) \|_{u^\theta(t)} \ d\theta, \quad \text{where } v^\theta(t) = {d\gamma^t\over d\theta}.
\end{equation*}
\begin{figure}[h]
  \includegraphics[page=5, scale=1]{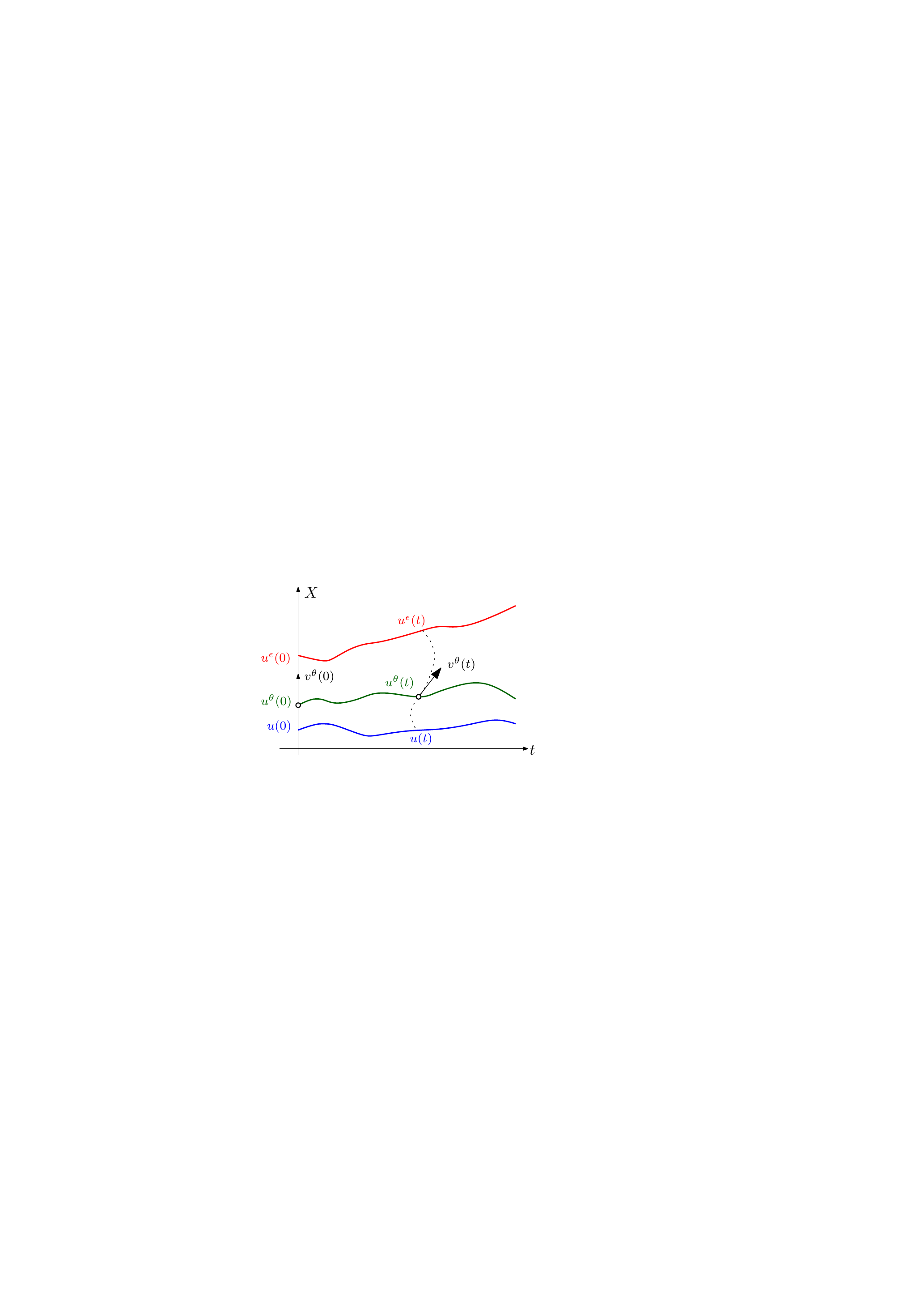}
  \caption{Compare two solutions $u(x)$ and $u^\epsilon(x)$ at a given time $t$.}
  \label{homotopy figure}
\end{figure}

Here the subscript $u^\theta(t)$ emphasizes the dependence of the norm on the flow $u$.

The next step is to define a Finsler norm for an infinitesimal tangent vector, by measuring the cost (with energy density $\mu$) in shifting from one solution to the other one. From Figure \ref{2storder figure}, in measuring the cost of transporting $u$ to $u^\epsilon$ on the $x$-$u$ plane, we notice that the tangent flow $v$ only measures the vertical displacement (dashed arrow) between two solutions. In order to provide enough freedom for  planar transports, one needs to add a quantity, named as $w$, to measure the (horizontal) shift on $x$. It is nature to consider both vertical and horizontal shifts (solid arrows) in the energy space to estimate the cost of transport.

\begin{figure}[h]
  \includegraphics[page=2, scale=0.9]{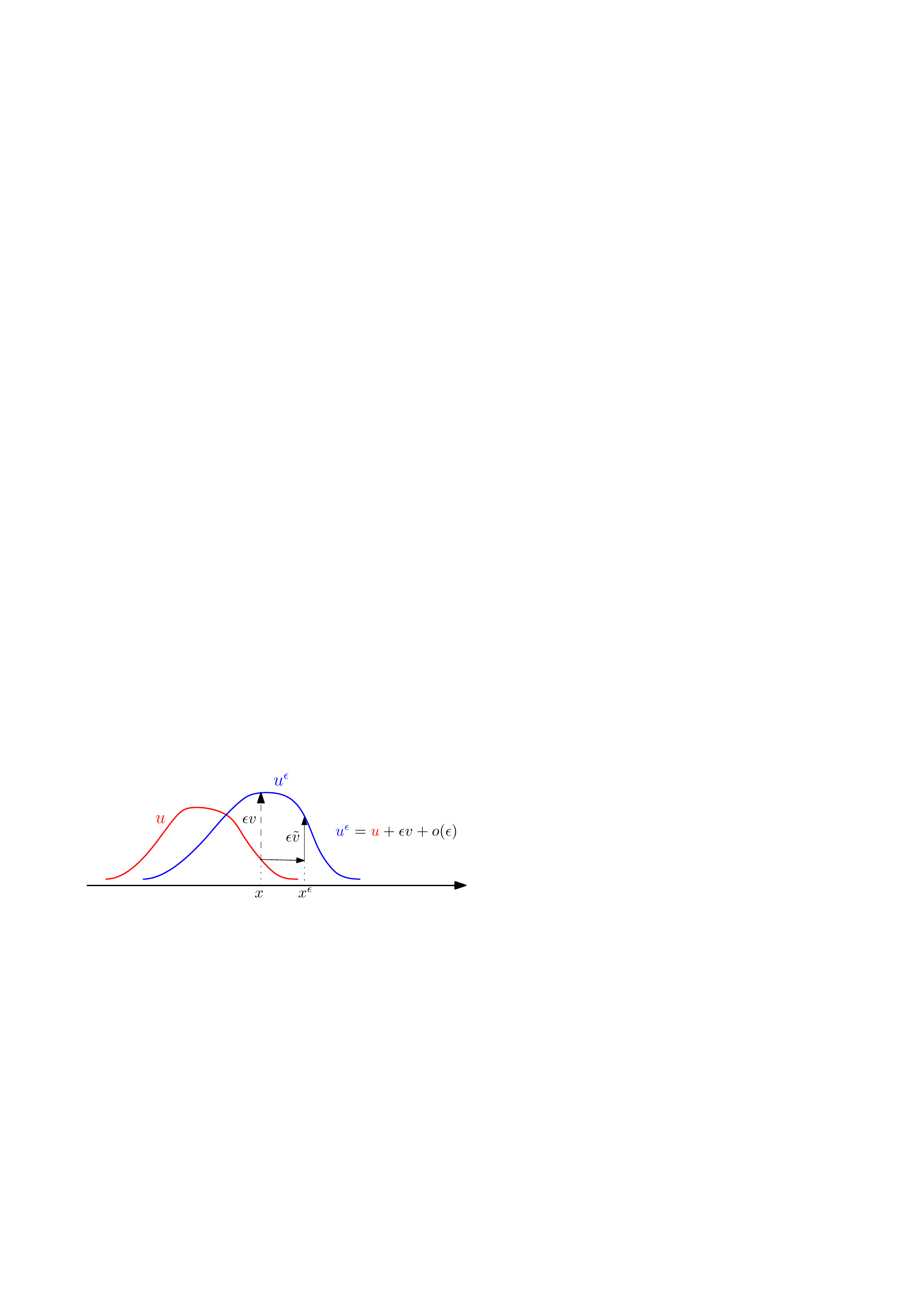}
  \caption{A sketch of how to deform from $u$ to $u^\epsilon$: (a) a vertical shift $\epsilon v$; (b) a horizontal shift $\epsilon u_x w$ followed by a vertical displacement $\epsilon v$. We denote the total shift as $\epsilon \hat v=\epsilon (v+u_x w)$ as in \eqref{1.7}. Here $x^\epsilon:=x+\epsilon w(x)$. }
  \label{2storder figure}
\end{figure}

With this in mind,, the cost function basically includes
\begin{equation}\label{1.6}
\begin{split}
\|(w, \hat v)\|_u=&\int_\R \LCB [\text{horizontal shift}]+[\text{vertical shift}]+[\text{change in } u]\RCB\,d\mu\\
&\qquad+\int_\R [\text{change in the base measure with density } \mu]\,dx,
\end{split}
\end{equation}
where, for example, as shown in Figure \ref{2storder figure}, it follows from Taylor's expansion that
\beq{\label{1.7}}
\begin{split}
[\hbox{change in } u] & = o(\epsilon) \hbox{ order of } (u^\epsilon(x^\epsilon) - u(x)) =  \hat v = v(x) + u_x(x) w(x).
\end{split}
\eeq
A more detailed description is given in Section \ref{sec_tangent}.

This way for an infinitesimal tangent vector $v$, we can accordingly define its Finsler norm to be 
\[
\|v\|_u = \inf_{w\in \mathcal{A}} \|(w, \hat v)\|_u, \quad \text{ with }\quad \hat v = v + u_x w, 
\]
where the admissible set $\mathcal{A}$ (defined in (\ref{2.16})) involves information of the characteristics of the equation that helps select the ``reasonable" transports. 



The explicit definition of the Finsler norm is provided in \eqref{abs Finsler} and \eqref{2.17}. The next thing we do is to investigate how this norm of the tangent vector propagates in time along any solution. The key estimate on the growth of the norm of the tangent vector is given in Lemma \ref{lem_est}.

The implication of Lemma \ref{lem_est} is that, for any $T>0$, if all solutions $u^\theta(t, \cdot)$ are sufficiently regular for $t\in [0, T]$, such that the tangent vectors are well-defined on $\gamma^t$, then it holds that
\begin{equation}\label{rough est}
\|\gamma^t\| \leq C \|\gamma^0\| \qquad \forall\ t \in [0, T],
\end{equation}
where $C$ only depends on the total energy of the initial data. Therefore it is natural to define the geodesic distance
\begin{equation*}
d \LC u, u^\epsilon \RC := \inf \LCB \|\gamma^t\|; \ \gamma^t:\ [0, 1] \to H^1\cap W^{1, 4}, \ \gamma^t(0) = u, \ \gamma^t(1) = u^\epsilon \RCB,
\end{equation*}
and one can expect from \eqref{rough est} that the solution map is Lipschitz continuous under this metric.
\vspace{.2cm}

\paragraph{\em Step 2: Generic regularity}
However, smooth solutions do not always remain smooth for all time. In fact for the Novikov equation \eqref{Novikov}, the gradient of the solution can blow up in finite time, and therefore a smooth path of initial data $\gamma^0$ may lose regularity at later time $T$ so that the tangent vector $d\gamma^T/dt$ may not be well-defined (see Figure \ref{homotopy figure} (b)); even if it does exist, it is not obvious that the estimate \eqref{rough est} should remain valid. The idea is to show that there are sufficiently many paths which are initially smooth, and remain (piecewise) smooth later in time. We prove in Section \ref{sec_generic regularity}
\begin{Theorem}\label{thm_generic}
Let $T>0$ be given, then there exists an open dense set 
\[\mathcal{D}\subset \Big(\mathcal{C}^3(\mathbb{R})\cap H^1(\mathbb{R})\cap W^{1,4}(\mathbb{R})\Big),\]
such that, for $u_0\in \mathcal{D}$, the conservative solution $u=u(t,x)$ of \eqref{2.1} is twice continuously differentiable in the complement of finitely many characteristic curves, within the domain $[0,T]\times \mathbb{R}.$
\end{Theorem}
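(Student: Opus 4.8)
The plan is to follow the classical strategy for generic regularity results of this type (as in the works on the Camassa--Holm and variational wave equations), adapted to the higher-regularity setting forced by the cubic nonlinearity of the Novikov equation. The essential idea is to pass to characteristic (Lagrangian) coordinates, in which the conservative solution becomes a smooth solution of a semilinear system; in those coordinates the loss of regularity of $u$ is converted into the vanishing of a Jacobian-type quantity (the ``stretching factor'' relating Lagrangian to Eulerian variables), and wave breaking corresponds precisely to this factor touching zero. One then formulates the statement ``the solution is $\mathcal C^2$ away from finitely many characteristics within $[0,T]\times\mathbb R$'' as a \emph{transversality} condition on the map defined by the Lagrangian flow, and invokes Thom's transversality theorem to show it holds for an open dense set of initial data.

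\medskip

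Concretely, I would proceed as follows. \textbf{Step A (Lagrangian reformulation).} Introduce the characteristic $y\mapsto x(t,y)$ with $\dot x = u^2$, and the variables $v$ (essentially $\arctan u_x$ or an analogous bounded substitute for the gradient) and an energy variable tracking $u^4+2u^2u_x^2-\tfrac13 u_x^4$ along characteristics. Using the weak form \eqref{weakNov} and the conservation laws \eqref{2.8}--\eqref{2.9}, derive the semilinear ODE system satisfied by $(x, u, v, \text{energy})$ as functions of $(t,y)$; the nonlocal convolution terms become smooth functions of the Lagrangian unknowns because the kernel $e^{-|x|}$ pulls back nicely. Note that $u_0\in\mathcal C^3$ gives $\mathcal C^2$ (or better) regularity of the Lagrangian data, hence of the flow in $(t,y)$ by standard ODE theory. \textbf{Step B (singularities = zeros of the stretching factor).} Show that $u(t,\cdot)$ fails to be $\mathcal C^2$ at a point $x(t,y_0)$ only if the partial derivative $\xi(t,y_0) := x_y(t,y_0)$ vanishes (wave breaking), and conversely that on the open set $\{\xi>0\}$ one can invert back to Eulerian coordinates and recover $\mathcal C^2$ regularity of $u$. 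So it suffices to control the zero set of the smooth function $\xi(t,y)$ on $[0,T]\times\mathbb R$. \textbf{Step C (transversality).} The generic condition is that $\xi$ vanishes transversally: at each zero $(t_0,y_0)$ of $\xi$ one has $\xi_y\neq 0$ (or, where $\xi_y$ also vanishes, a second-order nondegeneracy so that only isolated ``generic'' fold/cusp singularities occur), and that the singular set, being a one-dimensional submanifold of the compact strip, meets each time slice in finitely many points and consists of finitely many arcs — each of which is a characteristic curve. Using Thom's transversality theorem applied to the jet map $u_0 \mapsto \big((t,y)\mapsto (\xi, \xi_y, \xi_{yy})(t,y)\big)$ — with the parameter space the Banach (or Fréchet) space $\mathcal C^3\cap H^1\cap W^{1,4}$ — one gets that for $u_0$ in a residual, in fact open dense (because of compactness of $[0,T]\times\mathbb R$ after accounting for decay at infinity), set $\mathcal D$, the map is transversal to the relevant stratified subset, which forces the singular set to have the stated structure. \textbf{Step D (back to Eulerian variables).} Translate the structure of the singular set in $(t,y)$ back to $(t,x)$: finitely many characteristic curves outside of which $x_y>0$ and $u$ is $\mathcal C^2$, yielding the theorem.

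\medskip

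The main obstacle, I expect, is \textbf{Step C}: verifying the genericity hypotheses of Thom's theorem in this infinite-dimensional PDE setting. One must (i) identify precisely which finite-codimension ``bad'' strata in jet space need to be avoided so that the zero set of $\xi$ is a nice finite union of characteristic arcs — this requires a careful local normal-form analysis of wave breaking for Novikov, complicated by the two nonlocal convolution terms which, although smooth, contribute globally coupled lower-order terms to the evolution of $\xi$ and its $y$-derivatives; and (ii) prove that the dependence $u_0\mapsto(\text{Lagrangian flow})$ is smooth enough and that the transversality can be achieved by perturbations \emph{within} the admissible class $\mathcal C^3\cap H^1\cap W^{1,4}$ (finite-dimensional perturbations localized in $y$ should suffice, but one must check they do not destroy the needed decay/integrability). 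A secondary technical point is handling the non-compact spatial domain: one reduces to a compact region either by exploiting that $\xi\to 1$ and $u_x\to 0$ at spatial infinity (so breaking can only occur in a bounded $y$-interval depending on the energy and $T$), or by a partition/exhaustion argument. Once the transversality is in place, Steps A, B, D are essentially bookkeeping built on the existence/uniqueness theory of \cite{CCL} (Theorem \ref{thm_exist}).
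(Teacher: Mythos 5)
Your overall architecture (Lagrangian reformulation, reduce to a compact region by the spatial decay, apply Thom's transversality to cut out an open dense set, then translate back to Eulerian coordinates) matches the paper's approach. Steps A, B, and D are essentially what is done there. The problem is in Step C: you propose to apply Thom's theorem to the jet map $u_0\mapsto (\xi,\xi_y,\xi_{yy})$ where $\xi:=x_y$ is the Lagrangian--Eulerian Jacobian. This choice is wrong and the argument would not go through.

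The reason is that the Jacobian vanishes to \emph{high order automatically} at every singular point, not just at degenerate ones. In the paper's coordinates one has $x_Y = \xi\cos^4(\alpha/2)$ with $\xi>0$ bounded away from zero, where $\alpha=2\arctan u_x$; wave breaking is $\alpha=\pi$. Since $\cos(\alpha/2)$ enters to the fourth power (a consequence of the $W^{1,4}$ energy weighting forced by the cubic nonlinearity), at any point with $\alpha=\pi$ one has not just $x_Y=0$ but also $\partial_Y x_Y=\partial_Y^2 x_Y=\partial_Y^3 x_Y=0$ identically, regardless of the initial data. So the ``bad stratum'' $\{\xi=\xi_y=\xi_{yy}=0\}$ contains the \emph{entire} singular set and is not a finite-codimension condition that generic perturbation of $u_0$ can avoid; Thom's theorem applied to these jets yields nothing. (For Camassa--Holm the analogous Jacobian vanishes to second order, so the same objection applies there.)

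What the paper does instead is apply the transversality argument to the bounded gradient variable $\alpha=2\arctan u_x$ itself, which satisfies a clean semilinear ODE in the $(t,Y)$ coordinates and whose jets are \emph{not} automatically degenerate at $\alpha=\pi$. The generic conditions that are actually imposed are
\[
(\alpha,\alpha_Y,\alpha_{YY})\neq(\pi,0,0)\qquad\text{and}\qquad(\alpha,\alpha_Y,\alpha_t)\neq(\pi,0,0),
\]
and Lemma \ref{Lemma 4.3} shows one can build 3-parameter families of perturbed solutions realizing full rank of $D_\vartheta(\alpha,\alpha_Y,\alpha_{YY})$ and of $D_\vartheta(\alpha,\alpha_Y,\alpha_t)$, which is the hypothesis Thom's theorem needs (Lemma \ref{Lemma 4.4}). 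The second of the two conditions, which you only gesture at, is what guarantees that each connected singular arc $\{\alpha=\pi\}$ can be written as a graph $Y=\phi_j(t)$; then $\tfrac{d}{dt}x(t,\phi_j(t))=x_t=u^2$ because $x_Y=0$ there, so the image is a characteristic curve. Your Step A already introduces $\arctan u_x$ as a variable, so the fix is simply to carry that choice through Step C rather than switching to the Jacobian.
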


The generic regularity is itself of great interest because it shows very detailed structures of singularities. As a consequence, in Corollary \ref{Corollary 4.1}, we prove the existence of regular paths  connecting any two solutions of Theorem \ref{thm_generic}.

Similar generic regularity result was first established for the variational wave equation \cite{BC} and used in establishing a Lipschitz metric in \cite{BC2015}. This idea was applied to the Camassa-Holm equation in \cite{LZ}. The proof of Theorem \ref{thm_generic} relies on an application of Thom's Transversality Theorem.
\vspace{.2cm}

\paragraph{\em Step 3: Metric for general weak solutions}
For general weak solutions, we first use the semi-linear system established in \cite{CCL} to extend the Lipschitz metric from smooth solutions to piecewise smooth solutions, cf. Theorem \ref{thm_length}, Definition \ref{def5.1} and Definition \ref{Definition 7.1}. In the second step, we generalize this metric on the space $H^1(\R)\cap W^{1,4}(\R)$ and prove the Lipschitz property. This step is now very nature because the set of regular paths, that are roughly speaking
piecewise smooth solutions, is a dense set of the solution space.
\vspace{.2cm}

%
\paragraph{\em Step 4: Compare with other metrics} See details in Section 6.

\begin{Remark}
Currently, there are several parallel techniques in constructing the Lipschitz metrics.
These include: the Finsler distance used in this paper (first established in \cite{BC,BC2015} for variational wave equation);  method in \cite{GHR2011,GHR2013} for Camassa-Holm equation, which uses semi-linear equations on Lagrangian coordinates; and a direct transport method in \cite{BF} combing a convergence argument from multi-peakon solutions.

Notice that, since we mainly work on the original coordinates and do not need the existence of sufficiently many special solutions such as multi-peakon solutions, our way to construct the metric is quite transparent and robust, as long as one has good knowledge about the energy concentration. In Section \ref{appendix CH}, we also apply this method to the Camassa-Holm equation, and the computation for the key estimates is greatly simplified. 
\end{Remark}

We also point out at the end of the article that the semi-linear system introduced in \cite{CCL} can be successfully implemented to simulate the peakon interactions. This is usually considered to be a quite challenging problem. Most of the current methods treat directly the ODEs governing the dynamics of peakons. 
Yet to the authors' knowledge, no schemes have been proposed to realize the energy concentration exactly at the collision time. This is largely due to the blow-up of the momentum density when two peakons collide. Here we take advantage of the semi-linear system in \cite{CCL} which makes use of a set of new variables that dilates the interacting characteristics and in particular, allows one to trace the behavior of peakons even at the instant of collision. We present in Section \ref{sec_peakon} several numerical experiments to both showcase explicitly the energy concentration of a peakon-antipeakon interaction, and the merits of using our scheme. Our method can be easily employed to other integrable system to analyze the peakon interactions.

%

\section{Preliminary results}\label{sec_pre}
In this section we recall some useful estimates and the existence and uniqueness of global conservative solution of the Novikov equation. For more details the reader can refer to \cite{CCL}.

First we rewrite the Cauchy problem for the Novikov equation \eqref{Novikov} as
 \begin{equation}\label{2.1}
\begin{cases}
       u_t+u^2u_x+\partial_x P_1+P_2=0,\\
       u(0,x)=u_0(x),
        \end{cases}
 \end{equation}
where 
\begin{equation}\label{2.2}
P_1:=p*\LC \frac{3}{2}uu_x^2+u^3 \RC ,\quad P_2:=\frac{1}{2}p*\LC u_x^3 \RC,
\end{equation}
with $p=\frac{1}{2}e^{-|x|}$ being the Green's function for $(1 - \partial_x^2)^{-1}$ on $\R$.

For smooth solutions, differentiating equation \eqref{2.1} with respect to $x$ we have
 \begin{equation}\label{2.3}
       u_{xt}+u^2u_{xx}+\frac{1}{2}uu_x^2-u^3+P_1+\partial_x P_2=0.
 \end{equation}
Further manipulation leads to the following local conservation laws
\begin{align}\label{2.6}
      & \LC \frac{u^2+u_{x}^2}{2} \RC_t+\LC \frac{u^2u^2_{x}}{2}+uP_1+u\partial_x P_2 \RC_x=0, \\
\label{2.7}
 \begin{split}
      & \LC u^4+2u^2u_x^2-\frac{1}{3}u_x^4 \RC_t+ \LC 2u^4u_x^2-\frac{1}{3}u^2u_x^4+\frac{4}{3}u^3(P_1+\partial_x P_2)\RC_x\\
       &\qquad+\frac{4}{3}\LC(P_1+\partial_x P_2)^2-(P_2+\partial_x P_1)^2\RC_x=0,
  \end{split}
\end{align}
which indicates two conserved quantities
\begin{align}\label{2.8}
    \mathcal{E}(t) & =\int_{\R}  (u^2+u_{x}^2)(t,x)\,dx=\mathcal{E}(0) ,\\
\label{2.9}
    \mathcal{F}(t) & =\int_{\R}  (u^4+2u^2u_x^2-\frac{1}{3}u_x^4)(t,x)= \mathcal{F}(0) \,dx.
 \end{align}
Therefore we can bound
\begin{align*}
\|u\|_{L^\infty}^2 & \leq \|u\|_{H^1}^2=\mathcal{E}(0),\\
\|u_x\|_{L^4}^4 & =3\int_\R (u^4+2u^2u_x^2)\,dx-3\mathcal{F}(t)
    \leq 3\LC 2\mathcal{E}(0)^2-\mathcal{F}(0) \RC,
\end{align*}
which in turn implies that
   \begin{equation}\label{2.11}
    \|u_x\|_{L^3}^3\leq \sqrt{3\mathcal{E}(0)\big(2\mathcal{E}(0)^2-\mathcal{F}(0)\big)}=: K.   \end{equation}
 Now, we are able to bound $P_i(t)$ and the derivatives $\partial P_i$ for $i=1,2$ as follows.
   \begin{equation}\label{2.12}
  \begin{split}
 & \|P_1(t)\|_{L^\infty},  \|\partial_x P_1(t)\|_{L^\infty}\leq \|p\|_{L^\infty}\|\frac{3}{2}uu_x^2+u^3\|_{L^1}\leq \frac{3}{4}\mathcal{E}(0)^\frac{3}{2},\\
   & \|P_1(t)\|_{L^2},  \|\partial_x P_1(t)\|_{L^2}\leq \|p\|_{L^2}\|\frac{3}{2}uu_x^2+u^3\|_{L^1}\leq \frac{3}{2\sqrt{2}}\mathcal{E}(0)^\frac{3}{2},\\
   & \|P_2(t)\|_{L^\infty},  \|\partial_x P_2(t)\|_{L^\infty}\leq \frac{1}{2}\|p\|_{L^\infty}\|u_x^3\|_{L^1}\leq \frac{1}{4}K,\\
   & \|P_2(t)\|_{L^2},  \|\partial_x P_2(t)\|_{L^2}\leq \frac{1}{2}\|p\|_{L^2}\|u_x^3\|_{L^1}\leq \frac{1}{2\sqrt{2}}K.
    \end{split} \end{equation}
    
Now we state the theorem on the existence and uniqueness of conservative solutions to the Cauchy problem \eqref{2.1}.
\begin{Theorem}[\cite{CCL}]\label{thm_exist}
Let $u_0\in H^1(\R)\cap W^{1,4}(\R)$ be an absolute continuous function. Then the Cauchy problem \eqref{2.1} admits a unique energy {\bf\em conservative} solution $u(t,x) $ defined for all $(t,x)\in \R^+\times\R$ in the following sense.
\begin{itemize}
\item[(i)] For any fixed $t\geq 0$, $u(t,\cdot)\in H^1{(\R)}\cap W^{1,4}{(\R)}$.
The map $t\rightarrow u(t,\cdot)$ is Lipschitz continuous under the $L^4(\R)$ metric.

\item[(ii)] The solution $u=u(t,x)$ satisfies the initial condition of \eqref{2.1} in $L^4(\R),$ and
\beq\label{nv_weak}
\iint_{\Lambda}
	\left\{-u_x\, \bigl(\phi_t+u^2\,\phi_x)+ \LC -\frac{3}{2}u u_x^2-u^3+P_1 +\partial_x P_2\RC \phi\right\}\, dx\, dt +\int_{\R}u_{0,x}\phi(0,x)\,dx=0
\eeq
for every test function $\phi\in C_c^1(\Lambda)$ with $\Lambda=\Big\{(t,x)\,\Big|\  t\in [0, \infty), x\in \R\Big\}$.

\item[(iii)] There exists a family of Radon measures $\{\mu_t,\, t\in \R^+\}$,  depending continuously on time and w.r.t the topology of weak convergence of measures.
For every $t\in \R^+$,  the absolutely continuous part of $\mu_t$ with respect to the Lebesgue measure has density $u_x^4(t,\cdot)$, which provides a
measure-valued solution to the balance law
\bel{weak_en}
 \int_{\R^+} \left\{ \int(\phi_t+u^2\phi_x)d\mu_t+\int\Big(4u^3 u_x^3-4u_x^3 (P_1+\partial_x P_2)\Big)\phi\, dx \right\}dt - \int_\R u^4_{0,x}\phi(0,x)dx=0,
\eeq
for every test function $\phi\in C_c^1(\Lambda)$.
\end{itemize}
Moreover, the solution also satisfies the following properties.
\begin{itemize}
\item[(1)] $u(t,x)$ is H\"older continuous with exponent $3/4$ on both $t$ and $x$.
\item[(2)] The first energy density $u^2+u_x^2$ is conserved for any time $t\geq 0$, i.e.
\bel{energy1}
\mathcal{E}(t)=\| u(t)\|^2_{H^1}=\| u_0\|^2_{H^1}\quad\hbox{for any}\quad t\geq0;
\eeq

\item[(3)] The second energy density $u^4 + 2u^2u^2_x - {1\over3}u^4_x$ is conserved in the following sense.
\begin{itemize}
\item[(a)] An energy inequality is satisfied: 
\bel{energy2}
\mathcal{F}(t) =\int_\R \LC u^4 + 2u^2u^2_x - {1\over3}u^4_x \RC(t,x)\ dx \geq \mathcal{F}(0)\quad\hbox{for any}\quad t\geq0.
\eeq
\item[(b)]  Denote  a family of Radon measures $\left\{\nu_t,\  t\in \R^+\right\}$, such that
\[\nu_t(\mathcal{A})=\int_\mathcal{A} \LC u^4 + 2u^2u^2_x  \RC(t,x)\ dx-{1\over3}\mu_t(\mathcal{A})\]
for any  Lebesgue  measurable set $\mathcal{A}$ in $\R$. Then
for any $t\in \R^+$,
\[
\nu_t(\R)=\nu_0(\R)=\mathcal{F}(0)=\int_\R \LC u^4 + 2u^2u^2_x - {1\over3}u^4_x \RC(0,x)\ dx.
\]

For any $t\in \R^+$,  the absolutely continuous part of $\nu_t$ with respect to Lebesgue measure has density $u^4 + 2u^2u^2_x - {1\over3}u^4_x$.
For almost every $t\in \R^+$, the singular part of $\nu_t$ is concentrated on the set where
$u=0$.
\end{itemize}
\item[(4)]
A continuous dependence result holds. Consider a sequence of initial data ${u_0}_n$ such that
$\|{u_0}_n-u_0\|_{H^1\cap W^{1,4}}\rightarrow 0$, as $n\rightarrow\infty$. Then
the corresponding solutions $u_n(t,x)$ converge to $u(t,x)$ uniformly for $(t,x)$
in any bounded sets.
\end{itemize}
\end{Theorem}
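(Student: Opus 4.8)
The plan is to reduce the quasilinear Cauchy problem \eqref{2.1} to a semilinear system by passing to Lagrangian-type coordinates that desingularize the gradient blow-up. Since the energy concentration for the Novikov equation occurs at the level of $u_x^4\,dx$ (rather than $u_x^2\,dx$ as for Camassa--Holm), the independent variable replacing the spatial variable must absorb the cumulative energy $\int(u^2+u_x^2+u_x^4)\,dx$. Concretely, I would introduce the characteristic $x=x(t,y)$ defined by $\partial_t x=u^2(t,x(t,y))$ together with an ``energy variable'' normalized so that, at $t=0$, $y$ carries the measure $(1+u_{0,x}^2+u_{0,x}^4)\,dx$; one then works with the unknowns $x(t,y)$, $U(t,y):=u(t,x(t,y))$, a rescaled gradient, and energy-fraction variables chosen so that they stay bounded even where $u_x\to\infty$ (for instance by keeping track of $x_y$, of $u_x\cdot x_y$, and of the portions of the two energy densities carried by each $y$-interval). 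Rewriting \eqref{2.1} and \eqref{2.3} in these variables, and expressing the nonlocal terms $P_1,P_2$ from \eqref{2.2} as integrals over $y$ against the now bounded densities, the evolution becomes a system of ODEs in a Banach space with a locally Lipschitz right-hand side.

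With the semilinear reformulation in hand, local existence and uniqueness follow from the contraction mapping principle (Picard iteration) in a suitable space of bounded continuous functions of $y$. To globalize I would invoke the conservation laws: $\mathcal{E}(t)=\mathcal{E}(0)$ from \eqref{2.8} controls $\|u\|_{L^\infty}$ and $\|u\|_{H^1}$; \eqref{2.9} together with \eqref{2.11} controls $\|u_x\|_{L^3}$ and $\|u_x\|_{L^4}$; and \eqref{2.12} then bounds $P_1,P_2,\partial_xP_1,\partial_xP_2$ in $L^\infty\cap L^2$ uniformly in time. These a priori estimates prevent the Lagrangian unknowns from escaping to infinity in finite time, so the local solution extends to all $t\ge 0$, the energy-fraction variables remaining bounded by construction.

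Next I would translate the global Lagrangian solution back to Eulerian coordinates. One must verify that $y\mapsto x(t,y)$ is nondecreasing (so that $u(t,\cdot)$ is well defined), and that it fails to be strictly increasing only on a set of times of measure zero — this is precisely where wave breaking, i.e.\ concentration of $u_x^4\,dx$ into a singular measure $\mu_t$, occurs. Defining $\mu_t$ as the push-forward of the $y$-energy measure under $x(t,\cdot)$, a direct computation using \eqref{2.6}--\eqref{2.7} shows that $u$ solves the weak equation \eqref{nv_weak} and that $\mu_t$ solves the balance law \eqref{weak_en}; conservation \eqref{energy1} is immediate, the inequality \eqref{energy2} reflects that the singular part of $-\tfrac13 u_x^4$ can only increase $\mathcal{F}$, and the localization of the singular part of $\nu_t$ on $\{u=0\}$ comes from the structure of the blow-up set in Lagrangian variables. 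The H\"older-$3/4$ regularity of $u$ in $(t,x)$ follows from the $H^1$-bound (giving $C^{1/2}$ in $x$) combined with the transport relation $\partial_t x=u^2$ and the $L^4$-bound on $u_x$.

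Finally, uniqueness and continuous dependence. For uniqueness I would show that any energy-conservative weak solution in the sense of (i)--(iii), when pulled back along its own characteristics, necessarily satisfies the same semilinear system with the same data, hence coincides with the constructed solution by the uniqueness part of the Banach-space ODE theory; the delicate point is that the characteristic ODE $\dot x=u^2$ must be shown to possess a well-defined forward flow despite $u$ being merely H\"older, which uses the one-sided Lipschitz/monotonicity properties inherited from the energy bounds. Continuous dependence (property (4)) then follows from the Lipschitz dependence of solutions of the semilinear system on their data, together with continuity of the coordinate change and its inverse in the data, with extra care near times of energy concentration. I expect the main obstacles to be: (a) designing the change of variables so that both the cubic nonlinearity and the two nonlocal terms $P_1,P_2$ become Lipschitz functions of the new unknowns — genuinely harder than for Camassa--Holm, and the reason one is forced into $H^1\cap W^{1,4}$; and (b) the uniqueness argument, where one must exclude non-conservative continuations past wave breaking and control the flow of the low-regularity velocity field. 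These are exactly the points carried out in \cite{CCL}.
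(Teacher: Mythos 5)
Your proposal reproduces the strategy of \cite{CCL}, the reference from which Theorem~\ref{thm_exist} is quoted and whose construction the present paper merely recalls in Section~\ref{sec_generic regularity}: pass to a characteristic/energy coordinate so that \eqref{2.1} becomes a globally well-posed semilinear Banach-space ODE system for $(x,u,\alpha,\xi)$, push the solution back to Eulerian variables, and verify the conservative-solution properties (i)--(iii) and (1)--(4) together with uniqueness. One technical detail would need correcting for the semilinearization to close cleanly: you normalize the new variable by the density $1+u_{0,x}^2+u_{0,x}^4$, whereas \cite{CCL} uses the perfect square $(1+u_{0,x}^2)^2$ (see the definition of $Y$ and \eqref{aluxi}); this is precisely what makes $\alpha=2\arctan u_x$ and $\xi=(1+u_x^2)^2/Y_x$ the right unknowns, since then $\cos^4(\alpha/2)=(1+u_x^2)^{-2}$ and the nonlocal terms in \eqref{4.4} together with the right-hand sides of \eqref{4.1} become manifestly bounded and Lipschitz in $(u,\alpha,\xi)$, whereas with your non-factoring density the corresponding algebraic/trigonometric parametrization is substantially messier.
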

 
\section{The norm of tangent vector for smooth solutions}\label{sec_tangent}
To illustrate the ideas on how to construct the Lipschitz metric, in this section we first consider smooth solutions to \eqref{2.1}. We take a family of perturbed solutions $u^\epsilon(x)$ to \eqref{2.1}, which can be written as
\begin{equation}\label{2.13}
u^\epsilon(x)=u(x)+\epsilon v(x)+o(\epsilon).
\end{equation}
Here in this section, we make an abuse of notation of using $f(t)$ or $f(x)$ to denote a function $f(t,x)$. 

A straightforward calculation yields that the first order perturbation $v$ satisfies
\begin{equation}\label{2.14}
\begin{split}
v_t+u^2v_x&+2uvu_x+\frac{1}{2}\LC \int_x^{\infty}-\int^x_{-\infty}\RC e^{-|x-y|}\LC 3u_y v_y u+\frac{3}{2}u^2_y v+3u^2v \RC \,dy\\
&+\frac{3}{4}\int^{\infty}_{-\infty}e^{-|x-y|}u^2_yv_y\,dy=0.
\end{split}\end{equation}
Differentiating \eqref{2.14} with respect to $x$, one obtains
\begin{equation}\label{2.15}\begin{split}
v_{xt}+u^2v_{xx}&+uu_xv_x+\frac{1}{2}u^2_xv +2uvu_{xx}-3u^2v+\frac{1}{2}\int_\R e^{-|x-y|} \LC 3u_y v_y u+\frac{3}{2}u^2_y v+3u^2v \RC \,dy\\
&+\frac{3}{4}\LC \int_x^{\infty}-\int^x_{-\infty}\RC e^{-|x-y|}u^2_yv_y\,dy=0.
\end{split}\end{equation}

As explained in Section \ref{subsec_plan}, to obtain enough freedom in measuring the shift from one solution to the other one, we need to add a quantity $w(t,x)$ measuring the horizontal shift:
\beq{\label{wdef}}
x^{\epsilon}:=x+\epsilon w(x) +o(\epsilon).
\eeq
To focus only on reasonable transports between two solutions, we select $w(t,x)$ by propagating along characteristics the shifts $w_0(x)$ as the initial data.
%
That is, we require that when $x(t)$ is a characteristic emanating from $x_0$ then $x^\epsilon(t)$ is also a characteristic emanating from $x^\epsilon_0$, so
\begin{equation*}
\frac{d}{dt}x^\epsilon(t)= \LC u^\epsilon(x^\epsilon) \RC^2 \qquad{\hbox{when}}\qquad
\frac{d}{dt}x(t)= u^2(x).
\end{equation*}
Thus, using (\ref{2.13}), (\ref{wdef}) and taking the limit as $\epsilon\rightarrow 0$, we have 
\begin{equation}\label{2.18}
w_t+u^2w_x=2u(v+u_x w).
\end{equation}

So the Finsler norm on the space of infinitesimal tangent vector $v$ takes the form
\begin{equation}\label{Finsler v}
\|v\|_u: = \inf_{w\in \mathcal{A}} \|(w, \hat v)\|_u, \quad \text{ with }\quad \hat v = v + u_x w, 
\end{equation}
where the admissible set is defined as
\begin{equation}\label{2.16}
\qquad\mathcal{A} = \LCB {{\hbox{solutions $w(t,x)$ of \eqref{2.18} with  smooth initial data $w_0(x)$} }}  \RCB.
\end{equation}
Note that $u$ is always smooth in this section, hence in (\ref{2.18}) $w$ can be solved with a given initial data  $w(0,x) = w_0(x)$.


To motivate the explicit construction of $\|(w,v)\|_u$, we consider a reference solution $u$ together with a perturbation $u^\epsilon$, as shown in Figure \ref{2storder figure}. Recall that we are interested in determining the cost of transporting the energy (with density $\mu \approx u_x^4$) from $u$ to $u^\epsilon$. In fact we will choose $\mu = (1 + u_x^2)^2$, and the cost should account for the following
\begin{equation}\label{abs Finsler}
\begin{split}
\|(w, \hat{v})\|_u & =\int_\R \LCB [\text{change in } x]+[\text{change in } u]+[\text{change in  arctan} u_x]\RCB(1+u_x^2)^2 e^{-|x|}\,dx\\
& \quad +\int_\R [\text{change in the base measure with density } (1+u_x^2)^2] e^{-|x|}\,dx\\
& =: I_1+I_2+I_3+I_4\,.
\end{split}
\end{equation}

More precisely, we have
\begin{equation}\label{2.17}
\begin{split}
\|v\|_u
&=\inf_{w \in \mathcal{A}}\int_\R \LCB |w|(1+u_x^2)^2+|v+u_x w|(1+u_x^2)^2+|v_x+u_{xx}w|(1+u_x^2) \right.\\
&\qquad\qquad \left. +|4(u_x+u_x^3)(v_x+u_{xx}w)+(1+u_x^2)^2w_x|\RCB\, e^{-|x|}\,dx\\
&=: \inf_{w \in \mathcal{A}} \LC I_1+I_2+I_3+I_4 \RC.
\end{split}\end{equation}
We note that when $u\in L^1$, the $e^{-|x|}$ term is not necessary.
Below we briefly explain how to obtain \eqref{2.17}, using (\ref{2.13}) and (\ref{wdef}).
\begin{itemize}
\item For [change in $x$] in $I_1$,
\[
{1\over \epsilon}\LB x^\epsilon - x \RB=w+o(\epsilon).
\]
\item For [change in $u$] in $I_2$,
\[
{1\over \epsilon}\LB u^\epsilon(x^\epsilon) - u(x) \RB = v(x) + u_x(x) w(x)+ o(\epsilon).
\]
\item For [change in $\arctan u_x$] in $I_3$, 
\begin{equation*}
\begin{split}
\arctan u^\epsilon_x(x^\epsilon) & = \arctan \LB u_x(x^\epsilon) + \epsilon v_x(x^\epsilon) + o(\epsilon) \RB \\
& = \arctan \LB u_x(x) + \epsilon w(x) u_{xx}(x) + \epsilon v_x(x) + o(\epsilon) \RB \\
& = \arctan u_x(x) + \epsilon {v_x(x) + w(x)u_{xx}(x) \over 1 + u_x^2(x)}+ o(\epsilon) .
\end{split}
\end{equation*}
\item For  [change in the base measure with density $(1+u_x^2)^2$] in $I_4$, using the following identities
\begin{equation*}
\begin{split}
\LC u^\epsilon_x(x^\epsilon) \RC^2 & = u_x^2(x^\epsilon) + 2\epsilon u_x(x^\epsilon) v_x(x^\epsilon) + o(\epsilon) \\
& = u_x^2(x) + 2\epsilon w(x) u_x(x) u_{xx}(x) + 2\epsilon v_x(x) u_x(x) + o(\epsilon),
\end{split}
\end{equation*}
we obtain that 
\begin{equation*}
\begin{split}
\LC 1 + \LC u^\epsilon_x(x^\epsilon) \RC^2 \RC^2dx^\epsilon - \LC 1 + u_x^2(x) \RC^2  dx
= \LC 4\epsilon (v_x + w u_{xx})(u_x + u_x^3) + \epsilon w_x (1 + u_x^2)^2 + o(\epsilon)\RC dx.
\end{split}
\end{equation*}
\end{itemize}

Next we want to understand how this norm of the tangent vector changes in time. The main result of this section is the following.
\begin{Lemma}\label{lem_est} Let $T>0$ be given, 
and $u(t,x)$ be a smooth solution to \eqref{2.1} when $t\in[0,T]$. Assume that the first order perturbation $v$ satisfies equation \eqref{2.14}. Then it follows that 
\begin{equation}\label{2.19}
\|v(t)\|_{u(t)}\leq \mathcal{C}(T)\|v(0)\|_{u(0)},
\end{equation}
for some constant $\mathcal{C}(T)$ depending only on initial total energy $\mathcal{E}(0)$, $\mathcal{F}(0)$ and $T$.
\end{Lemma}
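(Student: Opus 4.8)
The plan is to establish a Gronwall-type inequality for the four pieces $I_1,\dots,I_4$ of the Finsler norm. First I would fix a smooth solution $u$ on $[0,T]$ and a smooth first-order perturbation $v$ solving \eqref{2.14}, together with an arbitrary admissible $w\in\mathcal{A}$ solving the transport equation \eqref{2.18}. The key observation is that $\|v(t)\|_{u(t)}$ is an infimum over $w\in\mathcal{A}$, and the flow \eqref{2.18} preserves admissibility: if $w(t,\cdot)$ solves \eqref{2.18} with smooth data, then it stays a valid competitor for all later times. So it suffices to show that for \emph{every} fixed admissible triple $(w,\hat v)$, the integrand quantity $\|(w,\hat v)\|_{u(t)}$ grows at most exponentially, i.e. $\frac{d}{dt}\|(w,\hat v)\|_{u(t)}\le \mathcal{C}\,\|(w,\hat v)\|_{u(t)}$ with $\mathcal{C}$ depending only on $\mathcal{E}(0)$, $\mathcal{F}(0)$; taking the infimum over $w$ at time $0$ then yields \eqref{2.19} with $\mathcal{C}(T)=e^{\mathcal{C}T}$.

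The main computation is differentiating each $I_j$ in time. The natural device is to pass to Lagrangian/characteristic coordinates: let $x(t,\xi)$ solve $\dot x = u^2(t,x)$, so that $dx$ along a characteristic evolves by $\frac{d}{dt}(dx)=2uu_x\,dx$, and the weight $e^{-|x|}$ has a controlled material derivative (here one uses $|\dot x|=u^2\le\mathcal{E}(0)$, with care at $x=0$ where $e^{-|x|}$ is only Lipschitz — this contributes a bounded, favorable-signed term or is handled by an approximation argument). Along characteristics, equation \eqref{2.18} gives $\frac{d}{dt}w = 2u\hat v$ (so $|w|$ grows controllably since $|u|\le\mathcal E(0)^{1/2}$ and $\hat v$ appears in $I_2$), and from \eqref{2.14} one computes $\frac{d}{dt}\hat v = \frac{d}{dt}(v+u_xw)$, where the $v$-equation and $v_x$-equation \eqref{2.15} supply the needed expressions; the nonlocal terms $P_1,P_2,\partial_xP_1,\partial_xP_2$ and their $v$-linearizations are bounded in $L^\infty$ and $L^2$ by the a priori bounds \eqref{2.12} and \eqref{2.11}. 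For $I_3$ one differentiates $\arctan$-type quantity $\frac{v_x+u_{xx}w}{1+u_x^2}$, and for $I_4$ one differentiates $4(u_x+u_x^3)(v_x+u_{xx}w)+(1+u_x^2)^2 w_x$; crucially, the worst terms — those involving $u_{xx}$, $u_{xxx}$, or $w_x$, which are not controlled by the energy — should cancel or recombine into multiples of the $I_j$'s themselves, exactly as the metric was designed to do. The cubic structure means one must track factors of $u$ and $u_x$ carefully, using $\|u\|_{L^\infty}$, $\|u_x\|_{L^3}^3\le K$, $\|u_x\|_{L^4}^4$ bounded, and $\|u_x\|_{L^2}^2\le\mathcal E(0)$; every surviving coefficient must be estimated by these conserved quantities alone.

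The hard part will be the bookkeeping in the $I_4$-estimate: the term $(1+u_x^2)^2 w_x$ requires differentiating $w_x$, whose equation (obtained by differentiating \eqref{2.18}) contains $u^2 w_{xx}$ and $u_{xx}w$ terms, and one must verify that after combining with the time derivatives of $(1+u_x^2)^2$ and of the base measure these dangerous high-order terms either integrate by parts into lower-order controllable pieces or cancel against contributions from $I_3$. A parallel subtlety is the pair of nonlocal convolution terms in \eqref{2.14}–\eqref{2.15}: although pointwise bounded, their $x$-derivatives produce the $\operatorname{sgn}$-jump terms $\frac12(\int_x^\infty-\int_{-\infty}^x)$, and one must check these are dominated by $\int_\R |v_x|(1+u_x^2)\,dx\lesssim I_3$-type quantities via Hölder and Young's convolution inequality with the exponentially decaying kernel. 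I expect the remaining estimates (for $I_1$, $I_2$, and the low-order parts of $I_3$) to be routine applications of the a priori bounds \eqref{2.11}–\eqref{2.12} once the cancellation structure for the top-order terms is in place.
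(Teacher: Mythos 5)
Your outline reproduces the paper's strategy: fix an admissible $w$ propagated by \eqref{2.18}, show $\frac{d}{dt}\|(w,\hat v)\|_{u(t)}\le C\|(w,\hat v)\|_{u(t)}$ with $C$ depending only on the conserved energies, and take the infimum over $w$ at time zero; the paper's computation is exactly the $I_1,\ldots,I_4$ bookkeeping you describe, carried out via the Eulerian transport identity $\frac{d}{dt}\int|f|e^{-|x|}\,dx\le\int|f_t+(u^2f)_x|e^{-|x|}\,dx+\|u\|_{L^\infty}^2\int|f|e^{-|x|}\,dx$ (equivalent to your Lagrangian framing). The only miscalibration is where you locate the real work: you flag $I_4$ as hard and $I_2,I_3$ as routine, but in the paper's execution $I_2$ and $I_3$ are the delicate cases — e.g.\ the local term $\tfrac{3}{2}uu_x^2w(1+u_x^2)^2$ in the $I_2$ derivative carries an uncontrolled $u_x^4w$ factor and must first be rewritten as a convolution $-\tfrac12(\int_x^\infty-\int_{-\infty}^x)(e^{-|x-y|}\tfrac32 uu_y^2w)_y\,dy$ and integrated by parts before it recombines into multiples of $I_1,\ldots,I_4$ — whereas $I_4$ closes by the same mechanisms once $w_{xt}+u^2w_{xx}$ is read off from differentiating \eqref{2.18}.
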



\begin{proof}
It suffices to show that
\begin{equation}\label{est on w and tilde v}
{d \over dt} \LN \LC w(t), v(t) \RC \RN_{u(t)} \leq C \LN \LC w(t), v(t) \RC \RN_{u(t)},
\end{equation}
for  any $v$ and $w$ satisfying (\ref{2.14}), (\ref{2.15}) and (\ref{2.18}). Here $C$ is a constant depending only on the initial energy $\mathcal{E}(0)$ and $\mathcal{F}(0)$. In rest of this paper, unless specified, we will use $C$ to denote a constant depending on the initial energy $\mathcal{E}(0)$, $\mathcal{F}(0)$.

To prove \eqref{est on w and tilde v}, first, notice that for any smooth function $f$, we have
\begin{equation*}
\begin{split}
&\quad\frac{d}{dt}\int_\R |f|e^{-|x|}\,dx\\
=& \int_\R (|f|e^{-|x|})_t+(u^2 |f|e^{-|x|})_x\,dx\\
=&\int_\R \text{ sign}(f)[f_t+(u^2 f)_x]e^{-|x|}\,dx 
-\int_{0}^{\infty}u^2 |f| e^{-x}dx+\int_{-\infty}^{0}u^2 |f| e^x dx\\
\leq& \int_\R |f_t+(u^2 f)_x|e^{-|x|}\,dx +\|u\|_{L^\infty}^2\int_\R |f| e^{-|x|}dx.
\end{split}
\end{equation*}
Therefore in the following we only have to show that  
$
|f_t+(u^2 f)_x|e^{-|x|}
$ is bounded by the integrands in $I_1+I_2+I_3+I_4$,
with $f$ being $w(1+u_x^2)^2,\ (v+u_x w)(1+u_x^2)^2,\ (v_x+u_{xx}w)(1+u_x^2)$ and $4(u_x+u_x^3)(v_x+u_{xx}w)+(1+u_x^2)^2w_x$, respectively. 

To simplify the computation, we drop all the $e^{-|x|}$ terms (this can be thought of as assuming that the solution has compact support). In the general case, the same results remain valid when the factor $e^{-|x|}$ is inserted back.
\medskip

%
%
%

{\bf 1.} For $I_1$ in \eqref{2.17}, it follows from a direct computation that
\begin{equation}\label{2.20}
\big((1+u_x^2)^2\big)_t+\big(u^2(1+u_x^2)^2\big)_x=(u^3_x+u_x)(4u^3-4(P_1+\partial_x P_2)+2u).
\end{equation}
By \eqref{2.18} and \eqref{2.20}, we have
\begin{equation}\label{2.23}
\begin{split}
&\LC w(1+u_x^2)^2 \RC_t  + \LC u^2w(1+u_x^2)^2 \RC_x\\
= &\ (w_t+u^2w_x)(1+u_x^2)^2+w\LB \LC (1+u_x^2)^2 \RC_t + \LC u^2(1+u_x^2)^2 \RC_x \RB\\
= &\ 2u(v+u_xw)(1+u_x^2)^2+w(u^3_x+u_x) \LC 4u^3-4(P_1+\partial_x P_2)+2u \RC.
\end{split}
\end{equation}
This in turn yields the estimate
\begin{equation}\label{2.24}
\begin{split}
{dI_1\over dt} = \frac{d}{dt}\int_\R |w|(1+u_x^2)^2\,dx & \leq C\int_\R |v+u_xw|(1+u_x^2)^2\,dx+C\int_\R |w|(1+u_x^2)^2\,dx\\
& \leq C (I_1 + I_2).
\end{split}
\end{equation}

{\bf 2.} The estimates for the second and third terms are much more delicate. For $I_2$, from \eqref{2.3}, \eqref{2.14}, \eqref{2.18} and \eqref{2.20}, we obtain
\begin{equation}\label{2.25}
\begin{split}
& \quad\LC (v+u_xw)(1+u_x^2)^2 \RC_t+ \LC u^2(v+u_xw)(1+u_x^2)^2 \RC_x\\
& = \LB v_t+u^2v_x+u_x(w_t+u^2w_x)+w\LC u_{xt}+u^2u_{xx}\RC \RB (1+u_x^2)^2\\
&\quad +(v+u_xw)\LB \LC(1+u_x^2)^2 \RC_t+\LC u^2(1+u_x^2)^2\RC_x \RB\\
& = \LB -\frac{1}{2}\LC\int_x^{\infty}-\int^x_{-\infty}\RC e^{-|x-y|}\LC 3u_y v_y u+\frac{3}{2}u^2_y v+3u^2v\RC \,dy-\frac{3}{4}\int^{\infty}_{-\infty}e^{-|x-y|}u^2_yv_y\,dy \right.\\
&\quad\  \left.-2uvu_x+2uu_x(v+u_xw)+w\LC-\frac{uu_x^2}{2}+u^3-P_1-\partial_x P_2\RC\RB (1+u_x^2)^2\\
&\quad +(v+u_xw)(u^3_x+u_x)\LC 4u^3-4(P_1+\partial_x P_2)+2u\RC\\
& = \LB -\frac{1}{2}\LC\int_x^{\infty}-\int^x_{-\infty}\RC e^{-|x-y|}\LC 3u_y v_y u+\frac{3}{2}u^2_y v+3u^2v\RC \,dy-\frac{3}{4}\int^{\infty}_{-\infty}e^{-|x-y|}u^2_yv_y\,dy \right.\\
&\quad \ \left.+\frac{3 }{2}uu_x^2w+w(u^3-P_1-\partial_x P_2)\RB (1+u_x^2)^2+(v+u_xw)(u^3_x+u_x)(4u^3-4(P_1+\partial_x P_2)+2u)\\
& =: (I_{21}+I_{22}+I_{23}+I_{24})(1+u_x^2)^2+(v+u_xw)(u^3_x+u_x)(4u^3-4(P_1+\partial_x P_2)+2u).
\end{split}
\end{equation}

For the local term $I_{23}=\frac{3}{2}u u_x^2w(x)$, we have
\begin{equation*}
\begin{split}
I_{23}=&-\frac{1}{2}\LC \int_x^{\infty}-\int^x_{-\infty}\RC \LC e^{-|x-y|}\frac{3}{2}u u_y^2w\RC_y\,dy\\
=& -\frac{1}{2}\LC \int_x^{\infty}-\int^x_{-\infty}\RC e^{-|x-y|} \LC 3uu_yu_{yy}w+\frac{3}{2}u^3_yw+\frac{3}{2}uu^2_yw_y\RC\,dy\\
&+\frac{3}{4}\int_{-\infty}^\infty e^{-|x-y|}u u_y^2w\,dy.
\end{split}
\end{equation*} 
Hence
\begin{align*}
&\quad I_{21}+I_{23}\\
& = -\frac{1}{2}\LC \int_x^{\infty}-\int^x_{-\infty}\RC e^{-|x-y|}\LB 3uu_y(v_y+u_{yy}w)+\frac{3}{2}uu^2_yw_y+\frac{3}{2}u^2_y (v+u_yw)+3u^2v \RB \,dy\\
&\quad +\frac{3}{4}\int_{-\infty}^\infty e^{-|x-y|}u u_y^2w\,dy\\
& = -\frac{1}{2}\LC \int_x^{\infty}-\int^x_{-\infty}\RC e^{-|x-y|}\LCB \frac{3}{2}u\LB 4u_y(v_y+u_{yy}w)+(1+u_y^2)w_y\RB -3uu_y(v_y+u_{yy}w) \right. \\
& \quad \left. -\frac{3}{2}(uw)_y+\frac{3}{2}u_y w+\frac{3}{2}u^2_y (v+u_yw)+3u^2v\RCB\,dy+\frac{3}{4}\int_{-\infty}^\infty e^{-|x-y|}u u_y^2w\,dy\\
& = -\frac{1}{2}\LC \int_x^{\infty}-\int^x_{-\infty}\RC e^{-|x-y|}\LCB \frac{3}{2}u\LB 4u_y(v_y+u_{yy}w)+(1+u_y^2)w_y\RB -3uu_y(v_y+u_{yy}w) \right.\\
&\quad \left. +\frac{3}{2}u_y w+\frac{3}{2}u^2_y (v+u_yw)+3u^2v\RCB\,dy-\frac{3}{4}\LC \int_x^{\infty}-\int^x_{-\infty}\RC \frac{\partial e^{-|x-y|}}{\partial y}uw\,dy\\
&\quad -\frac{3}{2}uw(x)+\frac{3}{4}\int_{-\infty}^\infty e^{-|x-y|}u u_y^2w\,dy\\
& = -\frac{1}{2}\LC \int_x^{\infty}-\int^x_{-\infty}\RC e^{-|x-y|}\LCB \frac{3}{2}u\LB 4u_y(v_y+u_{yy}w)+(1+u_y^2)w_y \RB -3uu_y(v_y+u_{yy}w) \right. \\
&\quad \left. +\frac{3}{2}u_y w+\frac{3}{2}u^2_y (v+u_yw)+3u^2v\RCB\,dy+\frac{3}{4}\int_{-\infty}^\infty e^{-|x-y|}(uw+u u_y^2w)\,dy-\frac{3}{2}uw(x).
\end{align*}

Note that
 \begin{equation*}
3u^2v(y)=3u^2(v+u_yw)-3u^2u_yw,
\end{equation*}
which, together with the previous equality and the Sobolev inequality, implies that
\begin{equation}\label{2.27}
\begin{split}
&\quad \LV \int_\R (I_{21}+I_{23})(1+u_x^2)^2\,dx \RV \\
& \leq C\int_\R \LC \int_\R e^{-|x-y|}(1+u_x^2)^2\,dx\RC \LB \LV \frac{3}{2}u \LC 4u_y(v_y+u_{yy}w)+(1+u_y^2)w_y \RC \RV \right.\\
&\quad + \LV 3uu_y(v_y+u_{yy}w) \RV + \frac{3}{2}|u_y w|+\frac{3}{2}|u^2_y (v+u_yw)|+3|u^2(v+u_yw)|+3|u^2u_yw| \\
&\quad \left. +\frac{3}{4}|uw|+\frac{3}{4}|u u_y^2w|\RB \,dy+C\int_\R |w|(1+u_x^2)^2\,dx\\
& \leq C\int_\R \LV 4u_x(v_x+u_{xx}w)+(1+u_x^2)w_x \RV (1+u_x^2)\,dx+C\int_\R |w|(1+u_x^2)^2\,dx\\
&\quad +C\int_\R |v_x+u_{xx}w|(1+u_x^2)\,dx+C\int_\R |v+u_xw|(1+u_x^2)^2\,dx\\
& \leq C (I_4 + I_1 + I_3 + I_2),
\end{split}
\end{equation}
where we have used the fact that 
\[\int_\R e^{-|x-y|}(1+u_x^2)^2\,dx\leq \int_\R e^{-|x-y|}\,dx+2\int_\R u_x^2\,dx+\int_\R u_x^4\,dx\leq C.\]

For the second term $I_{22}$, we have
\begin{equation*}
\begin{split}
3u^2_yv_y=3u^2_y(v_y+u_{yy}w)-3u_y^2u_{yy}w=3u^2_y(v_y+u_{yy}w)-(u_y^3w)_y+u_y^3w_y.
\end{split}\end{equation*}
The last term can be bounded as
\begin{equation}\label{2.28}
\begin{split}
u_y^3w_y&\leq |(1+u_y^2)u_yw_y|\leq |4u_y^2(v_y+u_{yy}w)+(1+u_y^2)u_yw_y|+|4u_y^2(v_y+u_{yy}w)|\\
&\leq C |4u_y(v_y+u_{yy}w)+(1+u_y^2)w_y|(1+u_y^2)+|4u_y^2(v_y+u_{yy}w)|.
\end{split}
\end{equation} 
Then it holds that
\begin{align}
&\quad\LV \frac{3}{4}\int_\R \int_\R e^{-|x-y|}u^2_yv_y\,dy(1+u_x^2)^2\,dx\RV \nonumber\\
& \leq C \int_\R \LC \int_\R e^{-|x-y|}(1+u_x^2)^2\,dx \RC \LC |u^2_y(v_y+u_{yy}w)|+|4u_y(v_y+u_{yy}w)+(1+u_y^2)w_y|(1+u_y^2)\RC \,dy \nonumber\\
& \quad + C \LV \int_\R \LC \int_\R e^{-|x-y|}(u_y^3w)_y\,dy \RC(1+u_x^2)^2\,dx \RV \label{2.29} \\
& \leq C \int_\R |v_x+u_{xx}w|(1+u_x^2)\,dx+C \int_\R \LV 4u_x(v_x+u_{xx}w)+(1+u_x^2)w_x \RV(1+u_x^2)\,dx \nonumber\\
&\quad +C \int_\R\int_\R \LV \frac{\partial e^{-|x-y|}}{\partial y} \RV(1+u_x^2)^2\,dx \cdot |u_y^3w|\,dy \nonumber\\
& \leq C (I_3 +I_4 +I_1). \nonumber
\end{align}
Plugging \eqref{2.27} and \eqref{2.29} into \eqref{2.25}, we can conclude that
\begin{equation}\label{2.30}
\begin{split}
{dI_2 \over dt} & = \frac{d}{dt}\int_\R |v+u_xw|(1+u_x^2)^2\,dx \leq C(I_1 + I_2 + I_3 + I_4). 
\end{split}
\end{equation}

{\bf 3.} To estimate the time derivative of $I_3$, using \eqref{2.3}, \eqref{2.15} and \eqref{2.18} we obtain
\begin{equation}\label{2.31}
\begin{split}
&\quad\LC (v_x+u_{xx}w)(1+u_x^2)\RC_t+\LC u^2(v_x+u_{xx}w)(1+u_x^2)\RC_x\\
& =\LB v_{xt}+u^2v_{xx}+u_{xx}(w_t+u^2w_x)+w(u_{xxt}+u^2u_{xxx})\RB (1+u_x^2)\\
&\quad \ \  +(v_x+u_{xx}w)\LB (1+u_x^2)_t+\big(u^2(1+u_x^2)\big)_x\RB\\
& =\LB -uu_xv_x-\frac{1}{2}u_x^2v-2uvu_{xx}+3u^2v-\frac{1}{2}\int^{\infty}_{-\infty}e^{-|x-y|}(3u_y v_y u+\frac{3}{2}u^2_y v+3u^2v)\,dy \right. \\
&\quad \ \ -\frac{3}{4}\LC\int_x^{\infty}-\int^x_{-\infty}\RC e^{-|x-y|}u^2_yv_y\,dy+2uu_{xx}(v+u_x w)+w(3u^2u_x-3uu_xu_{xx}\\
&\quad \ \ -\partial_x P_1-P_2)\Big](1+u_x^2)+(v_x+u_{xx}w)\LB 2u^3u_x-2u_x(P_1+\partial_x P_2)+uu_x^3+2uu_x \RB\\
& = \LB-\frac{1}{2}u_x^2v+3u^2(v+u_xw)-\frac{1}{2}\int^{\infty}_{-\infty}e^{-|x-y|}(3u_y v_y u+\frac{3}{2}u^2_y v+3u^2v)\,dy \right.\\
&\left. \quad \ \ -\frac{3}{4}\LC \int_x^{\infty}-\int^x_{-\infty}\RC e^{-|x-y|}u^2_yv_y\,dy-w(\partial_x P_1+P_2)\RB (1+u_x^2)\\
&\quad +(v_x+u_{xx}w)\LB 2u^3u_x-2u_x(P_1+\partial_x P_2)+uu_x \RB.
\end{split}
\end{equation}
Note that 
\begin{equation*}
\begin{split}
-\frac{1}{2}u_x^2v&=-\frac{1}{2}u_x^2(v+u_xw)+\frac{1}{2}u_x^3w\\
&=-\frac{1}{2}u_x^2(v+u_xw)-\frac{1}{2}\LC \int_x^{\infty}-\int^x_{-\infty}\RC \LC e^{-|x-y|}\frac{1}{2}u_y^3w(y)\RC_y\,dy\\
&=-\frac{1}{2}u_x^2(v+u_xw)+\frac{1}{4}\int_{-\infty}^\infty e^{-|x-y|}u_y^3w\,dy\\
&\quad-\frac{1}{4}\LC \int_x^{\infty}-\int^x_{-\infty}\RC e^{-|x-y|}(3u_y^2u_{yy}w+u_y^3w_y)\,dy.
\end{split}
\end{equation*}
From the above inequality and \eqref{2.28}, the first and fourth terms in the last equality of \eqref{2.31} can be estimated as
\begin{align}
&\quad\LV \int_\R \LCB -\frac{1}{2}u_x^2v-\frac{3}{4}\LC \int_x^{\infty}-\int^x_{-\infty}\RC e^{-|x-y|}u^2_yv_y(y)\,dy\RCB(1+u_x^2)\,dx \RV \nonumber\\
&= \LV -\frac{1}{2}\int_\R u_x^2(v+u_xw)(1+u_x^2)\,dx+\frac{1}{4}\int_\R\int_\R e^{-|x-y|}u_y^3w\,dy(1+u_x^2)\,dx \right.\nonumber\\
&\quad \ \ \left. -\frac{1}{4}\int_\R \LC \int_x^{\infty}-\int^x_{-\infty}\RC e^{-|x-y|}[3u_y^2(v_y+u_{yy}w)+u_y^3w_y]\,dy(1+u_x^2)\,dx \RV \nonumber\\
&\leq C \int_\R |v+u_xw|(1+u_x^2)^2\,dx+\frac{1}{4}\int_\R \LC \int_\R e^{-|x-y|}(1+u_x^2)\,dx \RC |u_y^3w|\,dy \nonumber\\
&\quad +C \int_\R \LC \int_\R e^{-|x-y|}(1+u_x^2)\,dx \RC \LB |v_y+u_{yy}w|(1+u_y^2) \right. \label{2.32}\\
&\quad \quad \quad \left. +|4u_y(v_y+u_{yy}w)+(1+u_y^2)w_y|(1+u_y^2)+|4u_y^2(v_y+u_{yy}w)|\RB \,dy \nonumber\\
&\leq  C \int_\R|v+u_xw|(1+u_x^2)^2\,dx+C\int_\R |v_x+u_{xx}w|(1+u_x^2)\,dx \nonumber\\
&\quad+C\int_\R |w|(1+u_x^2)^2\,dx+C\int_\R |4u_x(v_x+u_{xx}w)+(1+u_x^2)w_x|(1+u_x^2)\,dx \nonumber \\
& \leq C(I_2 + I_3 + I_1 + I_4). \nonumber
\end{align} 
 On the other hand, for the third term in the last equality of \eqref{2.31}, we notice that
 \begin{equation}\label{2.33}
 \begin{split}
3u_yv_yu&=3uu_y(v_y+u_{yy}w)-3uu_yu_{yy}w\\
&=3uu_y(v_y+u_{yy}w)-\frac{3}{2}\big(uu_y^2w\big)_y+\frac{3}{2}u^3_yw+\frac{3}{2}uu^2_yw_y,
\end{split}\end{equation}
and
 \begin{equation} \label{2.34}
 \begin{split}
\frac{3}{2}uu^2_yw_y & \leq \frac{3}{4} \LV u(1+u_y^2)u_yw_y \RV\\
& \leq \frac{3}{4}|u| \cdot \LV (1+u_y^2)u_yw_y+4u^2_y(v_y+u_{yy}w) \RV +3|u| \cdot \LV u^2_y(v_y+u_{yy}w) \RV\\
& \leq C|u|\cdot \LV (1+u_y^2)w_y+4u_y(v_y+u_{yy}w)\RV (1+u_y^2)+3|u| \cdot \LV u^2_y(v_y+u_{yy}w)\RV.
\end{split}
\end{equation}
Thus, by \eqref{2.33}, \eqref{2.34} and integration by parts, we bound the third term in the following way.
\begin{align}
&\quad\frac{1}{2}\LV \int_\R \LC\int_\R e^{-|x-y|}(3u_y v_y u+\frac{3}{2}u^2_y v+3u^2v)\,dy \RC(1+u_x^2)\,dx \RV \nonumber\\
& \leq C\int_\R \LC \int_\R e^{-|x-y|}(1+u_x^2)\,dx \RC \LCB 3|uu_y(v_y+u_{yy}w)|+3|u||u^2_y(v_y+u_{yy}w)| \right. \nonumber\\
&\quad +|u| \LV(1+u_y^2)w_y+4u_y(v_y+u_{yy}w) \RV (1+u_y^2)+\frac{3}{2}|u_y^2(v+u_yw)|\nonumber \\
&\quad \left. +3|u^2(v+u_yw)| +3|u^2u_yw|\RCB\,dy+C\int_\R\int_\R \LV \frac{\partial e^{-|x-y|}}{\partial y}\RV (1+u_x^2)\,dx \cdot |uu_y^2w|\,dy   \nonumber  \\
& \leq C\int_\R |v_x+u_{xx}w|(1+u_x^2)\,dx+C\int_\R |w|(1+u_x^2)^2\,dx   \label{2.35} \\
&\quad +C\int_\R |4u_x(v_x+u_{xx}w)+(1+u_x^2)w_x|(1+u_x^2)\,dx+C\int_\R|v+u_xw|(1+u_x^2)^2\,dx \nonumber\\
& \leq C (I_3 + I_1 + I_4 + I_2). \nonumber
\end{align}
Plugging \eqref{2.32} and \eqref{2.35} into \eqref{2.31}, it holds that
\begin{equation}\label{2.36}
{dI_3\over dt} = \frac{d}{dt}\int_\R |v_x+u_{xx}w|(1+u_x^2)\,dx \leq C(I_1 + I_2 + I_3 + I_4).
\end{equation}

{\bf 4.} To estimate the time derivative of $I_4$, differentiating \eqref{2.18} with respect to $x$ we get 
\begin{equation}\label{2.22}
w_{tt}+u^2w_{xx}=2u_x(v+u_x w)+2u(v_x+u_{xx}w).
\end{equation}
Using  \eqref{2.3}, \eqref{2.15}, \eqref{2.18}, \eqref{2.20} and \eqref{2.22}, it holds that
\begin{align*}
&\quad\LC 4(u_x+u_x^3)(v_x+u_{xx}w)+(1+u_x^2)^2 w_x\RC_t+\LC u^2\big(4(u_x+u_x^3)(v_x+u_{xx}w)+(1+u_x^2)^2 w_x\big)\RC_x\\
& = 4\LB (1+3u_x^2)(u_{xt}+u^2u_{xx})+2uu_x^2(1+u_x^2))\RB (v_x+u_{xx}w)+4\LB v_{xt}+u^2v_{xx}+u_{xx}(w_t+u^2w_x) \right. \\
&\quad \left.+w(u_{xxt}+u^2u_{xxx})\RB (u_x+u_x^3)+(1+u_x^2)^2(w_{xt}+u^2w_{xx})+w_x\LB \LC (1+u_{x}^2)^2\RC_t+\LC u^2(1+u_x^2)^2\RC_x\RB \\
& = 4\LB (1+3u_x^2)\LC -\frac{1}{2}uu^2_x+u^3-P_1-\partial_xP_2\RC +2uu_x^2(1+u_x^2))\RB (v_x+u_{xx}w) \\
&\quad +4(u_x+u_x^3)\LB -uu_xv_x -\frac{1}{2}u_x^2v-2uvu_{xx}+3u^2v-\frac{1}{2}\int_\R e^{-|x-y|}(3u_y v_y u+\frac{3}{2}u^2_y v+3u^2v)\,dy \right.\\
&\quad \left.-\frac{3}{4}\LC \int_x^{\infty}-\int^x_{-\infty}\RC e^{-|x-y|}u^2_yv_y\,dy+2uu_{xx}(v+u_xw)+w\LC 3u^2u_x-3uu_xu_{xx}-\partial_{x} P_1-P_2 \RC\RB \\
&\quad +(1+u_x^2)^2\LB 2u_x(v+u_xw)+2u(v_x+u_{xx}w)\RB +w_x(u^3_x+u_x)\LB 4u^3-4(P_1+\partial_x P_2)+2u\RB \\
& = \LB 6uu_x^2+2u+4(1+3u^2_x)\LC u^3-P_1-\partial_xP_2 \RC\RB(v_x+u_{xx}w)+2u_x(1+u_x^2)\LB v+u_x(1+u_x^2)w\RB \\
&\quad -3(u_x+u_x^3)\LC \int_x^{\infty}-\int^x_{-\infty}\RC e^{-|x-y|}u^2_yv_y\,dy+4(u_x+u_x^3)\LB 3u^2(v+u_xw)-w(\partial_{x} P_1+P_2) \right. \\
&\quad \left. -\frac{1}{2}\int_\R e^{-|x-y|}(3u_y v_y u+\frac{3}{2}u^2_y v+3u^2v)\,dy \RB +w_x(u^3_x+u_x)\LB 4u^3-4(P_1+\partial_x P_2)+2u \RB.
\end{align*}
For the second and third terms in the last equality, we have
\begin{equation}\label{2.38}
\begin{split}
&\quad 2u_x(1+u_x^2)\LB v+u_x(1+u_x^2)w\RB -3(u_x+u_x^3)\LC \int_x^{\infty}-\int^x_{-\infty}\RC e^{-|x-y|}u^2_yv_y\,dy\\
&=-u_x(1+u_x^2)\LC \int_x^{\infty}-\int^x_{-\infty}\RC \LB \LC e^{-|x-y|}\big(v+u_y(1+u_y^2)w\big)\RC_y+3e^{-|x-y|}u^2_yv_y\RB \,dy\\
&=-u_x(1+u_x^2)\LC\int_x^{\infty}-\int^x_{-\infty}\RC e^{-|x-y|}\LC v_y+u_{yy}w+3u^2_y(v_y+u_{yy}w)+u_y(1+u_y^2)w_y\RC\,dy\\
&\quad+u_x(1+u_x^2)\int_\R e^{-|x-y|}\LC v+u_y(1+u_y^2)w\RC \,dy\\
&=-u_x(1+u_x^2)\LC \int_x^{\infty}-\int^x_{-\infty}\RC e^{-|x-y|}\LB v_y+u_{yy}w+4u^2_y(v_y+u_{yy}w)+u_y(1+u_y^2)w_y \right.\\
&\qquad\qquad\qquad  \left. -u^2_y(v_y+u_{yy}w)\RB\,dy+u_x(1+u_x^2)\int_\R e^{-|x-y|}\big(v+u_yw+u_y^3w\big)\,dy.\\
\end{split}
\end{equation}
While for the last term,
\begin{equation}\label{2.39}
\begin{split}
&\quad w_x(u^3_x+u_x)(4u^3-4(P_1+\partial_x P_2)+2u)\\
&= \LC 4u^3-4(P_1+\partial_x P_2)+2u \RC  \LB w_x(u^3_x+u_x)+4u^2_x(v_x+u_{xx}w)-4u^2_x(v_x+u_{xx}w) \RB.
\end{split}
\end{equation}
Finally, the term $-2(u_x+u_x^3)\int^{\infty}_{-\infty}e^{-|x-y|}(3u_y v_y u+\frac{3}{2}u^2_y v+3u^2v)(y)\,dy$ can be estimated in a similar way as is done to obtain \eqref{2.35}.
We thus conclude from all of the above that
\begin{equation}\label{2.40}
{dI_4 \over dt} = \frac{d}{dt}\int_\R \LV 4(u_x+u_x^3)(v_x+u_{xx}w)+(1+u_x^2)^2 w_x \RV\,dx \leq C(I_1 + I_2 + I_3 + I_4).
\end{equation}

Combining the inequalities \eqref{2.24}, \eqref{2.30}, \eqref{2.36} and \eqref{2.40} together, we obtain the desired inequality \eqref{est on w and tilde v}, and hence \eqref{2.19}.
\end{proof}

%

\section{Generic regularity for the Novikov equation: Proof of Theorem  \ref{thm_generic}.}\label{sec_generic regularity}
\setcounter{equation}{0}
In this section, we use the Thom's transversality Theorem to prove generic regularity result given in Theorem  \ref{thm_generic}. 

\subsection{The semi-linear system on new coordinates}
As a start, we first briefly review the semi-linear system introduced in \cite{CCL}, which will be used later. Please find detail calculations and derivations in \cite{CCL}.

Following the idea in \cite{CCL}, we introduce new coordinates $(t,Y)$, such that
\begin{equation*}
Y\equiv Y(t,x):=\int_0^{x^c (0;t,x)} (1+u^2_x(0,x'))^2\,dx',
\end{equation*}
where $a\mapsto x^c(a;t,x)$ denotes a characteristic passing through the point $(t,x)$. Here
the equation of characteristic is $$\frac{dx(t)}{dt}=u^2(t,x(t)).$$

In fact, $Y=Y(t,x)$ is a characteristic coordinate, which satisfies $Y_t+u^2Y_x=0$ for any $(t,x)\in\mathbb{R}^+\times\mathbb{R}$. 
We also denote
\begin{equation}\label{aluxi}
\alpha=2\arctan u_x \quad\text{ and } \quad\xi=\frac{(1+u_x^2)^2}{Y_x},
\end{equation}
then one can derive a semi-linear system
\begin{equation}\label{4.1}
\begin{cases}
& u_t(t,Y)=-\partial_x P_1-P_2,\\
& \alpha_t(t,Y)=-u\sin^2\frac{\alpha}{2}+2u^3\cos^2\frac{\alpha}{2}-2\cos^2\frac{\alpha}{2}(P_1+\partial_x P_2),\\
& \xi_t(t,Y)=\xi[(2u^3+u)-2(P_1+\partial_x P_2)]\sin \alpha.
\end{cases}
\end{equation}
Here the initial conditions given as 
\begin{equation}\label{4.3}
\begin{cases}
& u(0,Y)=u_0(x(0,Y)),\\
& \alpha(0,Y)=2\arctan u_{0,x}(x(0,Y)),\\
& \xi(0,Y)=1,
\end{cases}
\end{equation}
where
\begin{equation}\label{4.4}
\begin{split}
&\displaystyle P_1(Y)=\frac{1}{2}\int_{-\infty}^\infty e^{-|\int_{\bar{Y}}^Y( \xi\cos^4\frac{\alpha}{2})(t,\tilde{Y})\,d\tilde{Y}|}\LC \frac{3}{8}u\sin^2 \alpha+u^3\cos^4\frac{\alpha}{2}\RC \xi(t,\bar{Y})\,d\bar{Y},\\
&\displaystyle \partial_x P_1(Y)=\frac{1}{2}\LC \int^\infty_Y-\int_{-\infty}^Y \RC e^{-|\int_{\bar{Y}}^Y( \xi\cos^4\frac{\alpha}{2})(t,\tilde{Y})\,d\tilde{Y}|}\LC \frac{3}{8}u\sin^2\alpha+u^3\cos^4\frac{\alpha}{2}\RC \xi(t,\bar{Y})\,d\bar{Y},\\
&\displaystyle P_2(Y)=\frac{1}{8}\int_{-\infty}^\infty e^{-|\int_{\bar{Y}}^Y( \xi\cos^4\frac{\alpha}{2})(t,\tilde{Y})\,d\tilde{Y}|}\LC \xi\sin \alpha\sin^2\frac{\alpha}{2}\RC (t,\bar{Y})\,d\bar{Y},\\
&\displaystyle \partial_x P_2(Y)=\frac{1}{8}\LC \int^\infty_Y-\int_{-\infty}^Y \RC e^{-|\int_{\bar{Y}}^Y( \xi\cos^4\frac{\alpha}{2})(t,\tilde{Y})\,d\tilde{Y}|}\big(\xi\sin \alpha\sin^2\frac{\alpha}{2}\big)(t,\bar{Y})\,d\bar{Y}.\\
\end{split}
\end{equation}

By  expressing the solution $u(t,Y)$ in terms of the original variables $(t,x)$, one obtains a weak solution of the Cauchy problem \eqref{2.1} as stated in Theorem \ref{thm_exist}. Furthermore, this solution is proved to be the unique conservative solution in \cite{CCL}. Indeed, the following result is also proved in \cite{CCL}.

\begin{Lemma}\label{Lemma 4.1}
Let $(x,u,\alpha,\xi)(t,Y)$ be the solution to the system \eqref{4.1}--\eqref{4.3} with $\xi>0$. Then the set of points
\begin{equation}\label{4.5}
\mathrm{Graph}(u)=\{(t,x(t,Y),u(t,Y)):\quad (t,Y)\in \mathbb{R}^+\times\mathbb{R}\}
\end{equation}
is the graph of a unique conservative solution to the Novikov equation \eqref{2.1}.
\end{Lemma}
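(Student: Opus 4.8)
The plan is to reverse the change of variables that produced \eqref{4.1}: starting from a solution $(u,\alpha,\xi)(t,Y)$ of the semilinear ODE system with $\xi>0$, I would first reconstruct the spatial variable $x=x(t,Y)$, then show that the resulting moving graph is the graph of a single-valued function $u(t,x)$, and finally verify directly that this $u$ is a conservative weak solution in the sense of Theorem \ref{thm_exist} and that it is the only one.

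\emph{Step 1: recover $x$ and check compatibility.} Since $\alpha/2=\arctan u_x$ formally forces $\cos^4(\alpha/2)=(1+u_x^2)^{-2}$ and $\xi=(1+u_x^2)^2/Y_x$, one expects $x_Y=\xi\cos^4(\alpha/2)$ and $x_t=u^2$. So I would \emph{define}
\[
x(0,Y):=\sup\Big\{x:\ \int_0^x(1+u_{0,x}^2)^2\,dx'\le Y\Big\},\qquad x(t,Y):=x(0,Y)+\int_0^t u^2(s,Y)\,ds,
\]
and verify the compatibility $\partial_t x_Y=\partial_Y x_t$, i.e. $\partial_t\big(\xi\cos^4(\alpha/2)\big)=2u\tan(\alpha/2)\,\xi\cos^4(\alpha/2)$; a short computation inserting the $\alpha_t$ and $\xi_t$ equations of \eqref{4.1} shows both sides equal $u\,\xi\sin\alpha\cos^2(\alpha/2)$, the nonlocal terms $P_1+\partial_xP_2$ cancelling by design. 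Because $\xi>0$ and $\cos^4(\alpha/2)\ge0$, the map $Y\mapsto x(t,Y)$ is nondecreasing, and the a priori bounds \eqref{2.12} on $u,P_i$ make it proper, hence onto $\mathbb{R}$.

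\emph{Step 2: $u$ is well defined; Step 3: weak form, energies, measures.} On any interval where $x(t,\cdot)$ is constant we have $x_Y=\xi\cos^4(\alpha/2)=0$, so $\cos(\alpha/2)=0$ there; differentiating the $u$-equation of \eqref{4.1} in $Y$ and comparing with the $\alpha$- and $\xi$-equations shows that $\partial_Y u=\tan(\alpha/2)\,\partial_Y x$ is propagated, so $u$ is also constant on such intervals. Hence $u(t,x):=u(t,Y)$ for any $Y$ with $x(t,Y)=x$ is well defined, absolutely continuous in $x$ with $\partial_x u=\tan(\alpha/2)$ a.e., lies in $H^1\cap W^{1,4}$, and $t\mapsto u(t,\cdot)$ is Lipschitz into $L^4$ (with $3/4$-Hölder regularity jointly in $(t,x)$). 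I would then plug $u(t,x)$ into the weak formulation \eqref{nv_weak}, change variables $x\mapsto Y$ via $dx=\xi\cos^4(\alpha/2)\,dY$, and use \eqref{4.1} together with the representations \eqref{4.4} of $P_1,P_2,\partial_xP_1,\partial_xP_2$ to match the bulk and boundary terms; the same substitution turns $\mathcal E(t),\mathcal F(t)$ into $Y$-integrals of expressions in $(u,\alpha,\xi)$ whose $t$-derivatives vanish along \eqref{4.1}, giving \eqref{energy1}--\eqref{energy2}. Defining $\mu_t$ as the pushforward under $x(t,\cdot)$ of $\big(\xi\sin^4(\alpha/2)\big)\,dY$ yields the Radon measures of item (iii) with absolutely continuous density $u_x^4=\tan^4(\alpha/2)$, weak-continuous in $t$, and a further ODE computation gives the balance law \eqref{weak_en}; its singular part is carried by $\{\cos(\alpha/2)=0\}$, where the $\alpha$-equation forces $u=0$ for a.e.\ $t$, which also identifies $\nu_t$ in item (3)(b). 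This shows $\mathrm{Graph}(u)$ is the graph of a conservative solution, and continuous dependence (item (4)) follows from continuous dependence of the ODE solutions of \eqref{4.1}--\eqref{4.3} on their data.

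\emph{Step 4: uniqueness.} Conversely, given any conservative solution $\bar u$ as in Theorem \ref{thm_exist}, I would define along its characteristics ($\dot x=\bar u^2$) the coordinate $\bar Y$, and set $\bar\alpha=2\arctan\bar u_x$, $\bar\xi=(1+\bar u_x^2)^2/\bar Y_x$; using the local conservation laws \eqref{2.6}--\eqref{2.7} and the weak/measure identities one checks that $(\bar u,\bar\alpha,\bar\xi)(t,\bar Y)$ solves \eqref{4.1}--\eqref{4.3}. Since the right-hand sides of \eqref{4.1} are Lipschitz on bounded sets of the relevant space — the nonlocal terms \eqref{4.4} being Lipschitz in $(u,\alpha,\xi)$ because of the exponential kernel and the bounds \eqref{2.12} — the system has a unique solution, so $\bar u$ coincides with the $u$ built in Steps 1--3.

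The main obstacle is the degeneracy of the change of variables exactly where $x_Y=\xi\cos^4(\alpha/2)$ vanishes, i.e.\ on the set carrying the concentrated energy: Steps 2--3 must handle this carefully to keep $u$ single-valued and absolutely continuous and to account correctly for the singular parts of $\mu_t,\nu_t$; and in Step 4 the delicate point is showing that the Lagrangian quantities extracted from a merely weak solution satisfy \eqref{4.1} with equality (not just an energy inequality), which is where the \emph{exact} conservation built into the notion of conservative solution is essential.
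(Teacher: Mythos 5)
The paper does not actually prove Lemma~\ref{Lemma 4.1} here --- it is stated as a result taken from the companion work \cite{CCL}, so there is no internal proof to compare against. That said, your outline (reconstruct $x(t,Y)$ from $x_Y=\xi\cos^4(\alpha/2)$ and $x_t=u^2$, check the compatibility $\partial_t x_Y=\partial_Y x_t$, show $u$ is constant on the fibers where $x(t,\cdot)$ degenerates, push forward the $Y$-measures to get $\mu_t$ and $\nu_t$, verify the weak form by the change of variables $dx=\xi\cos^4(\alpha/2)\,dY$, and obtain uniqueness from Lipschitz dependence of the semilinear ODE system) is precisely the Lagrangian change-of-variables strategy used in \cite{CCL} and in the corresponding Camassa--Holm and variational-wave literature. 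Your compatibility calculation does check out: substituting the $\alpha_t$ and $\xi_t$ equations of \eqref{4.1} one finds
\[
\partial_t\bigl(\xi\cos^4\tfrac{\alpha}{2}\bigr)
= u\,\xi\sin\alpha\cos^2\tfrac{\alpha}{2}
= 2u\,u_Y
= \partial_Y(u^2),
\]
with the $P_1+\partial_xP_2$ contributions cancelling exactly. One small correction to Step~2: the identity you write as $u_Y=\tan(\alpha/2)\,x_Y$ is a $0\cdot\infty$ expression at the degenerate set $\cos(\alpha/2)=0$; the quantity that actually propagates (from the initial data $\xi(0,\cdot)=1$, $\alpha(0,\cdot)=2\arctan u_{0,x}$) is the non-degenerate form
\[
u_Y=\tfrac12\,\xi\sin\alpha\cos^2\tfrac{\alpha}{2}=\xi\sin\tfrac{\alpha}{2}\cos^3\tfrac{\alpha}{2},
\]
which vanishes exactly where $x_Y$ does and yields directly that $u(t,\cdot)$ is constant on each fiber $x(t,\cdot)^{-1}(x)$, so the graph is single-valued. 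With that rephrasing, Steps~1--2 are sound; Steps~3--4 are, as you note, where the real work (verification of \eqref{nv_weak}, \eqref{weak_en} and uniqueness) is done in \cite{CCL}.
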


\subsection{Families of perturbed solutions}
To begin with, we state the following technical lemma (cf. \cite{LZ}) which will be used  later.
\begin{Lemma}\label{Lemma 4.2}
Consider an ODE system
\[\frac{d}{dt}\mathbf{u}^\varepsilon=f(\mathbf{u}^\varepsilon),\quad \mathbf{u}^\varepsilon(0)=\mathbf{u}_0+\varepsilon_1\mathbf{v}_1+\cdots+\varepsilon_m\mathbf{v}_m,\]
where $\mathbf{u}^\varepsilon(t): \mathbb{R}\to \mathbb{R}^n$, $f$ is a Lipschitz continuous function. The system is well-posed in $[0,t^*)$. Let the matrix 
\[D_\varepsilon\mathbf{u}^\varepsilon(0)=(\mathbf{v}_1,\mathbf{v}_2,\cdots,\mathbf{v}_m)\in\mathbb{R}^{n\times m},\]
have rank  $\text{\bf rank } (D_\varepsilon\mathbf{u}^\varepsilon(0))=k$. Then for any $t\in [0,t^*)$, $$\text{\bf rank }(D_\varepsilon\mathbf{u}^\varepsilon(t))=k.$$

\end{Lemma}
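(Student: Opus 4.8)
\textbf{Proof proposal for Lemma \ref{Lemma 4.2}.}

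The plan is to study the evolution of the matrix of sensitivities and show that its rank is invariant in time. First I would introduce, for each multi-index parameter $\varepsilon=(\varepsilon_1,\dots,\varepsilon_m)$, the variational (sensitivity) matrix
\[
M(t):=D_\varepsilon\mathbf{u}^\varepsilon(t)=\big(\partial_{\varepsilon_1}\mathbf{u}^\varepsilon(t),\dots,\partial_{\varepsilon_m}\mathbf{u}^\varepsilon(t)\big)\in\mathbb{R}^{n\times m},
\]
evaluated along a fixed reference value of $\varepsilon$ (one may simply take $\varepsilon=0$, or keep $\varepsilon$ generic; the argument is the same). Differentiating the ODE $\frac{d}{dt}\mathbf{u}^\varepsilon=f(\mathbf{u}^\varepsilon)$ in each $\varepsilon_j$ and using that $f$ is (say) $C^1$ — or, in the merely Lipschitz case, working with the a.e.-defined Jacobian and the standard theory of linear ODEs with $L^\infty$ coefficients — one gets the linear variational equation
\[
\frac{d}{dt}M(t)=A(t)\,M(t),\qquad A(t):=Df\big(\mathbf{u}^\varepsilon(t)\big),
\]
with initial datum $M(0)=(\mathbf{v}_1,\dots,\mathbf{v}_m)$, which by hypothesis has rank $k$.

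Next I would invoke the fundamental matrix $\Phi(t)$ solving $\dot\Phi=A(t)\Phi$, $\Phi(0)=\mathrm{Id}_n$, which exists on $[0,t^*)$ and is invertible there (its determinant satisfies Liouville's formula $\det\Phi(t)=\exp\int_0^t \operatorname{tr}A(s)\,ds\neq0$). By uniqueness for the linear system we have the representation $M(t)=\Phi(t)M(0)$. Since $\Phi(t)$ is an invertible $n\times n$ matrix, left-multiplication by $\Phi(t)$ is a linear isomorphism of $\mathbb{R}^n$, hence preserves the dimension of the column space: $\operatorname{rank}M(t)=\operatorname{rank}\big(\Phi(t)M(0)\big)=\operatorname{rank}M(0)=k$ for every $t\in[0,t^*)$. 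This is exactly the claimed identity $\textbf{rank}(D_\varepsilon\mathbf{u}^\varepsilon(t))=k$.

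The only genuine technical point — and the one I expect to need the most care — is the differentiability of the flow in the parameters when $f$ is assumed merely Lipschitz rather than $C^1$: strictly speaking $D_\varepsilon\mathbf{u}^\varepsilon$ need not exist classically. In the intended application (the system \eqref{4.1}) the right-hand side is in fact smooth in the relevant variables, so this is a non-issue; in general one either upgrades the hypothesis to $f\in C^1$, or interprets $D_\varepsilon\mathbf{u}^\varepsilon$ via Rademacher's theorem together with the fact that Lipschitz dependence on initial data gives a Lipschitz (hence a.e. differentiable) map $\varepsilon\mapsto\mathbf{u}^\varepsilon(t)$, and runs the same fundamental-matrix argument with $A(t)\in L^\infty$. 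Everything else is the standard linear-ODE fact that an invertible propagator preserves rank, so no further estimates are required.
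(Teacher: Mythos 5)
Your proof is correct and is the standard argument; note that the paper itself does not give a proof of this lemma, but simply cites it from the reference \cite{LZ}, so there is no in-paper proof to compare against. Your route---differentiating the flow in the parameters to obtain the linear variational equation $\dot M(t)=A(t)M(t)$ with $A(t)=Df(\mathbf{u}^\varepsilon(t))$, representing $M(t)=\Phi(t)M(0)$ with the invertible propagator $\Phi$, and observing that left multiplication by an invertible $n\times n$ matrix preserves the rank of $M(0)=(\mathbf{v}_1,\dots,\mathbf{v}_m)$---is exactly the intended content of the lemma.

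One remark on the caveat you raise. You are right that the stated hypothesis ($f$ merely Lipschitz) is, strictly speaking, not enough to make the variational equation meaningful: the suggested fix of using Rademacher's theorem to get an a.e.\ Jacobian does \emph{not} automatically repair this, because the trajectory $t\mapsto\mathbf{u}^\varepsilon(t)$ is a one-dimensional curve in $\mathbb{R}^n$, hence a Lebesgue-null set when $n\geq2$, and there is no guarantee that $Df$ exists anywhere along it. In other words, ``$A(t)\in L^\infty$'' is not something you get for free from Rademacher. The correct reading, as you anticipate, is that the lemma is intended for $f\in C^1$ (and indeed $f$ is smooth in the application to the system \eqref{4.1}, modulo the verification of the bounds on the nonlocal terms that the paper carries out in Lemma \ref{Lemma 4.3}). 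So the argument stands once the hypothesis is interpreted as $C^1$; under a literal Lipschitz-only hypothesis one would need an additional structural assumption to make $D_\varepsilon\mathbf{u}^\varepsilon$ well defined.
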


Now, for a fixed solution of \eqref{4.1}, we are going to construct several families of perturbed solutions.
\begin{Lemma}\label{Lemma 4.3}
Let $(u,\alpha,\xi)$ be a smooth solution of the semi-linear  system \eqref{4.1} and let a point $(t_0,Y_0)\in \mathbb{R}^+\times\mathbb{R}$ be given.

{\rm (1)} If $(\alpha,\alpha_Y,\alpha_{YY})(t_0,Y_0)=(\pi,0,0)$, then there exists a 3--parameter family of smooth solutions $(u^\vartheta,\alpha^\vartheta,\xi^\vartheta)$ of \eqref{4.1}, depending smoothy on $\vartheta\in\mathbb{R}^3$, such that the following holds.

{\rm (i)} when $\vartheta=0\in\mathbb{R}^3$, one recovers the original solution, namely $(u^0,\alpha^0,\xi^0)=(u,\alpha,\xi)$.

{\rm (ii)} At the point $(t_0,Y_0)$, when $\vartheta=0$ one has
\begin{equation}\label{4.6}
\text{\bf rank } D_\vartheta(\alpha^\vartheta,\alpha_Y^\vartheta,\alpha_{YY}^\vartheta)=3.
\end{equation}

{\rm (2)} If $(\alpha,\alpha_Y,\alpha_{t})(t_0,Y_0)=(\pi,0,0)$, then there exists a 3--parameter family of smooth solutions $(u^\vartheta,\alpha^\vartheta,\xi^\vartheta)$ of \eqref{4.1}, depending smoothy on $\vartheta\in\mathbb{R}^3$, satisfying {\rm (i)--(ii)} as above, with \eqref{4.6} replaced by 
\begin{equation}\label{4.7}
\text{\bf rank } D_\vartheta(\alpha^\vartheta,\alpha_Y^\vartheta,\alpha_{t}^\vartheta)=3.
\end{equation}
\end{Lemma}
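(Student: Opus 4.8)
The strategy is to perturb the initial data for the semi-linear system \eqref{4.1}--\eqref{4.3} along three suitably chosen directions, solve forward in time, and then invoke Lemma \ref{Lemma 4.2} to transport the rank condition from $t=0$ to the target time $t_0$. Concretely, I would work backward along the characteristic through $(t_0,Y_0)$: let $\bar Y$ be the label of that characteristic, so that the point $(t_0,Y_0)$ corresponds in Lagrangian coordinates to $(t_0,\bar Y)$ where (after possibly relabeling) the $Y$-coordinate is frozen along characteristics. Since the system \eqref{4.1} is semi-linear with smooth (indeed real-analytic in $u,\alpha,\xi$ away from the non-local terms) right-hand side, and the non-local terms $P_1, \partial_x P_1, P_2, \partial_x P_2$ in \eqref{4.4} depend smoothly on the profile, the solution map from initial data $(u_0(\cdot), \alpha_0(\cdot), 1)$ to the solution at time $t_0$ is smooth on the relevant function spaces; this is exactly the setting of Lemma \ref{Lemma 4.2} once we restrict to a finite-dimensional family.

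For part (1): choose the three-parameter perturbation of the initial data so that, at $t=0$, the directional derivatives $\partial_{\vartheta_1},\partial_{\vartheta_2},\partial_{\vartheta_3}$ of the triple $(\alpha,\alpha_Y,\alpha_{YY})$ — evaluated at the \emph{initial} label that will flow to $(t_0,Y_0)$ — are linearly independent. The simplest device is to pick perturbations of $u_{0,x}$ (equivalently of $\alpha_0$) supported near the relevant label, with independent prescribed values of the function, its first, and its second $Y$-derivative at that point: e.g.\ bump functions $\phi_j(Y)$ with $(\phi_j, \phi_j', \phi_j'')$ at $\bar Y$ equal to the three standard basis vectors of $\mathbb{R}^3$. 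Then $D_\vartheta(\alpha^\vartheta,\alpha_Y^\vartheta,\alpha_{YY}^\vartheta)$ at $t=0$, $\vartheta=0$ is the identity, so it has rank $3$. Now apply Lemma \ref{Lemma 4.2}, treating $\mathbf u^\varepsilon$ as the vector of the three quantities $(\alpha,\alpha_Y,\alpha_{YY})$ (together with whatever auxiliary components $u,\alpha,\xi$ and their $Y$-derivatives are needed to close the ODE system they satisfy along the characteristic): since these quantities satisfy a closed, Lipschitz ODE system in $t$ along the characteristic $Y=\bar Y$ — obtained by differentiating \eqref{4.1} in $Y$ once and twice, and noting that the non-local coefficients are Lipschitz in the finitely many state variables — the rank of $D_\vartheta$ of the relevant sub-block is preserved, giving \eqref{4.6} at $(t_0,Y_0)$. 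Condition (i) is immediate since $\vartheta=0$ gives zero perturbation. Part (2) is identical except that one of the three monitored quantities is $\alpha_t$ instead of $\alpha_{YY}$; here one uses the $\alpha$-equation in \eqref{4.1} directly to express $\alpha_t$ in terms of $u,\alpha$ and the non-local terms, so that $\partial_\vartheta \alpha_t$ is a known linear combination of $\partial_\vartheta u, \partial_\vartheta\alpha$ and $\partial_\vartheta(P_1+\partial_x P_2)$; one then chooses the three perturbation directions to make $(\alpha, \alpha_Y, \alpha_t)$ have independent $\vartheta$-derivatives at the initial label (perturbing $u_0$ as well as $\alpha_0$ if necessary to hit the $\alpha_t$ direction), and again transports the rank forward by Lemma \ref{Lemma 4.2}.

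The main obstacle I anticipate is the bookkeeping needed to write down a genuinely \emph{closed} ODE system along the characteristic for the monitored quantities. Differentiating \eqref{4.1} in $Y$ brings in $\partial_x P_1, \partial_x^2 P_1$, etc.; one must check that all these non-local terms, when expressed via \eqref{4.4}, are Lipschitz functions of a finite-dimensional state vector (bounded using the conservation laws \eqref{2.8}--\eqref{2.9} and the bounds \eqref{2.12}), so that Lemma \ref{Lemma 4.2} genuinely applies — in particular that no higher and higher $Y$-derivatives cascade in uncontrollably. The hypotheses $(\alpha,\alpha_Y,\alpha_{YY})(t_0,Y_0)=(\pi,0,0)$ (resp.\ $(\pi,0,0)$ for $(\alpha,\alpha_Y,\alpha_t)$) are \emph{not} actually used to build the family — they merely identify the degenerate locus where this lemma will later be fed into the transversality argument — so the proof of Lemma \ref{Lemma 4.3} itself only needs smoothness of the flow and the rank-preservation lemma; the real work is packaging the state so that these two ingredients are legitimately available.
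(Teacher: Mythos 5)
Your plan is essentially the same as the paper's: one differentiates \eqref{4.1} in $Y$ (and $t$) to obtain a closed "extended" system for $(u,\alpha,\xi,\alpha_Y,\alpha_{YY},\xi_Y)$ (resp.\ including $\alpha_t$), perturbs the initial data in a 3-parameter family, verifies Lipschitz continuity of the right-hand sides by bounding the $u$-, $\alpha$-, $\xi$-derivatives of $P_j$ and $\partial_x P_j$ from \eqref{4.4} using the conservation laws, and transports full rank from $t=0$ to $t=t_0$ via Lemma~\ref{Lemma 4.2}. You correctly anticipate that the real work is in closing the system and verifying Lipschitzness of the non-local terms (which the paper carries out explicitly in equations \eqref{4.8}--\eqref{4.11} and the subsequent estimates), and you are right that the hypotheses $(\alpha,\alpha_Y,\alpha_{YY})=(\pi,0,0)$ themselves play no role in constructing the family.
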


\begin{proof}
Let $(u,\alpha,\xi)$ be a smooth solution of the semi-linear  system \eqref{4.1}. Given the point $(t_0,Y_0)$. Taking derivatives to the equations of $\alpha$ and $\xi$ in \eqref{4.1}, we have
\begin{equation}\label{4.8}
\begin{split}
\frac{\partial }{\partial t}\alpha_Y(t,Y) & = u_Y(6u^2\cos^2\frac{\alpha}{2}-\sin^2\frac{\alpha}{2})-\alpha_Y\sin \alpha(\frac{1}{2}u+u^3-P_1-\partial_x P_2)\\
& \quad -2\cos^2\frac{\alpha}{2}\xi\big(\cos^4\frac{\alpha}{2}(\partial_x P_1+ P_2)-\frac{1}{4}\sin^2\frac{\alpha}{2}\sin \alpha\big),
\end{split}\end{equation}

\begin{equation}\label{4.9}
\begin{split}
\frac{\partial }{\partial t}\alpha_{YY}(t,Y) & = u_{YY}(6u^2\cos^2\frac{\alpha}{2}-\sin^2\frac{\alpha}{2})-\alpha_Yu_Y\sin \alpha(1+6u^2)\\
&\quad -(\alpha_{YY}\sin \alpha+\alpha_Y^2\cos \alpha)(\frac{1}{2}u+u^3-P_1-\partial_x P_2)\\
& \quad +\alpha_Y\xi\sin \alpha\big(4\cos^4\frac{\alpha}{2}(\partial_x P_1+P_2)-\frac{1}{2}\sin^2\frac{\alpha}{2}\sin \alpha\big)\\
& \quad -2\cos^2\frac{\alpha}{2}\xi_Y\big(\cos^4\frac{\alpha}{2}(\partial_x P_1+ P_2)-\frac{1}{4}\sin^2\frac{\alpha}{2}\sin \alpha\big)\\
& \quad -2\cos^6\frac{\alpha}{2}\xi^2\big(\cos^4\frac{\alpha}{2}( P_1+ \partial_xP_2-u^3)-\frac{3}{8}u\sin^2 \alpha\big)\\
& \quad +\frac{1}{4}\cos^2\frac{\alpha}{2}\alpha_Y\xi\big(\sin^2 \alpha+2\sin^2\frac{\alpha}{2}\cos \alpha\big),
\end{split}\end{equation}

\begin{equation}\label{4.10}
\begin{split}
\frac{\partial }{\partial t}\alpha_{t}(t,Y) & = (\partial_x P_1+ P_2)(\sin^2\frac{\alpha}{2}-6u^2\cos^2\frac{\alpha}{2})-2\cos^2\frac{\alpha}{2}(\partial_t P_1+ \partial_t \partial_x P_2)\\
& \quad -\sin \alpha\big(-u\sin^2\frac{\alpha}{2}+2u^3\cos^2\frac{\alpha}{2}-2\cos^2\frac{\alpha}{2}(P_1+\partial_x P_2)\big)(\frac{1}{2}u+u^3-P_1-\partial_x P_2),
\end{split}\end{equation}

\begin{equation}\label{4.11}
\begin{split}
\frac{\partial }{\partial t}\xi_Y(t,Y) & = (\xi_Y\sin \alpha+\xi \alpha_Y\cos \alpha)\big(2u^3+u-2(P_1+\partial_x P_2)\big)\\
& \quad +\xi\sin \alpha\big(6u^2 u_Y+u_Y-2\xi\cos^4\frac{\alpha}{2}(\partial_x P_1+P_2)+\frac{1}{2}\xi\sin \alpha\sin^2\frac{\alpha}{2}\big),
\end{split}\end{equation}
with 
\begin{equation*}
u_Y(t,Y)=\frac{1}{2}\xi\sin \alpha\cos^2\frac{\alpha}{2},\quad u_{YY}=\frac{1}{2}\xi_Y\sin \alpha\cos^2\frac{\alpha}{2}+\frac{1}{2}\xi \alpha_Y\cos \alpha\cos^2\frac{\alpha}{2}-\frac{1}{4}\xi \alpha_Y\sin^2 \alpha.
\end{equation*}
Combining \eqref{4.1}, \eqref{4.8}, \eqref{4.9} and \eqref{4.11}, we obtain a complete system.
Now, we construct a family of solutions $(\bar{u}^\vartheta,\bar{\alpha}^\vartheta,\bar{\xi}^\vartheta)$ to the complete system \eqref{4.1}, \eqref{4.8}, \eqref{4.9} and \eqref{4.11} with initial data being the perturbations of \eqref{4.3}
\begin{equation}\label{4.12}
\begin{cases}
&\displaystyle \bar{u}^\vartheta(0,Y)=u(0,Y)+\sum_{i=1,2,3}\vartheta_i U_i(Y),\\
&\displaystyle \bar{\alpha}^\vartheta(0,Y)=\alpha(0,Y)+\sum_{i=1,2,3}\vartheta_i V_i(Y),\\
&\displaystyle \bar{\xi}^\vartheta(0,Y)=\xi(0,Y)+\sum_{i=1,2,3}\vartheta_i \zeta_i(Y),
\end{cases}\end{equation}
for some suitable functions $U_i, V_i, \zeta_i\in \mathcal{C}^\infty_0(\mathbb{R})$. That is, consider the system\begin{equation*}
\frac{\partial}{\partial t}\left(                   
\begin{array}{c}  
    \bar{u}^\vartheta\\  
 \bar{\alpha}^\vartheta\\ 
   \bar{\xi}^\vartheta\\ 
 \bar{\alpha}_Y^\vartheta\\
  \displaystyle  \bar{\alpha}_{YY}^\vartheta\\
 \bar{\xi}_Y^\vartheta
  \end{array}
\right)     =\left(                   
\begin{array}{c}  
    f_1^\vartheta\\  
f_2^\vartheta\\ 
   f_3^\vartheta\\ 
    f_4^\vartheta\\
 f_5^\vartheta\\
  f_6^\vartheta
  \end{array}
\right),          
\end{equation*}
with the initial data \eqref{4.12}, where $f_1^\vartheta,\cdots, f_6^\vartheta$ are the perturbations of the right-hand side of \eqref{4.1}, \eqref{4.8}, \eqref{4.9} and \eqref{4.11}. Thanks to Lemma \ref{Lemma 4.2}, it remains  to prove the Lipschitz continuity of $f_1^\vartheta,\cdots, f_6^\vartheta$. Observe that $(u,\alpha,\xi)$ is smooth, so we just need to consider the Lipschitz continuity of $P_j$ and $\partial_x P_j$ for $j=1,2$. From the definition of the source term $P_j$ at \eqref{4.4}, it follows
\begin{equation*}
\begin{split}
\left|\frac{\partial P_1}{\partial u}(t,Y) \right| & = \frac{1}{2}|\int_{-\infty}^\infty e^{-|\int_{\bar{Y}}^Y( \xi\cos^4\frac{\alpha}{2})(t,\tilde{Y})\,d\tilde{Y}|}\big(\frac{3}{8}\sin^2 \alpha+3u^2\cos^4\frac{\alpha}{2}\big)\xi(t,\bar{Y})\,d\bar{Y}|\\
& \leq \frac{1}{2}\int_{-\infty}^\infty e^{-|x-\bar{x}|}(\frac{3}{2}u_x^2+3u^2)(t,\bar{x})\,d\bar{x}\leq \frac{3}{2}\mathcal{E}(0).
\end{split}\end{equation*}
Similarly, by \eqref{2.8} and \eqref{4.4}, we can obtain
\begin{equation*}
\begin{split}
\left|\frac{\partial P_1}{\partial \alpha}(t,Y) \right| & \leq \frac{1}{2}\int_{-\infty}^\infty e^{-|\int_{\bar{Y}}^Y( \xi\cos^4\frac{\alpha}{2})(t,\tilde{Y})\,d\tilde{Y}|}|\int_{\bar{Y}}^Y (\xi\cos^2\frac{\alpha}{2}\sin \alpha)(t,\tilde{Y})\,d\tilde{Y}|\\
& \quad \cdot|\frac{3}{8}u\sin^2 \alpha+u^3\cos^4\frac{\alpha}{2}\big)|\xi(t,\bar{Y})\,d\bar{Y}\\
& \quad +\frac{1}{2}\int_{-\infty}^\infty e^{-|\int_{\bar{Y}}^Y( \xi\cos^4\frac{\alpha}{2})(t,\tilde{Y})\,d\tilde{Y}|}|\frac{3}{4}u\sin \alpha\cos \alpha-u^3\cos^2\frac{\alpha}{2}\sin \alpha|\xi(t,\bar{Y})\,d\bar{Y}\\
& \leq \frac{1}{2}\int_{-\infty}^\infty e^{-|x-\bar{x}|}|2\int_{\bar{x}}^x u_x(t,\tilde{x})\,d\tilde{x}||(\frac{3}{2}u u_x^2+u^3)(t,\bar{x})|\,d\bar{x}\\
& \quad +\frac{1}{2}\int_{-\infty}^\infty e^{-|x-\bar{x}|}|(\frac{3}{2}u u_x-\frac{3}{2}u u_x^3-2u^3u_x)(t,\bar{x})|\,d\bar{x}\\
& \leq \int_{-\infty}^\infty e^{-|x-\bar{x}|}|x-\bar{x}|^\frac{1}{2}\mathcal{E}(0)^\frac{1}{2}|(\frac{3}{2}u u_x^2+u^3)(t,\bar{x})|\,d\bar{x}+\frac{3}{4}\|u\|_{L^2}\|u_x\|_{L^2}\\
& \quad +\frac{3}{4}\|u\|_{L^\infty}\|u_x\|_{L^3}+\|u\|_{L^\infty}^2\|u\|_{L^2}\|u_x\|_{L^2}\\
& \leq \frac{3\sqrt{2}}{2}\mathcal{E}(0)^{\frac{3}{2}}+\frac{3}{4}\mathcal{E}(0)+\frac{3}{4}\mathcal{E}(0)^\frac{1}{2}K^{\frac{1}{3}}+2\mathcal{E}(0)^2,
\end{split}\end{equation*}
where $K$ is defined in \eqref{2.11}.
In a similar fashion as the above two estimates and the fact that $\xi$ is bounded, it is easy to verify the boundedness of $|\frac{\partial P_2}{\partial u}|$, $|\frac{\partial P_2}{\partial \alpha}|,\frac{\partial P_j}{\partial \xi}|, |\partial_u\partial_x P_j|, |\partial_\alpha\partial_x P_j|, |\partial_\xi\partial_x P_j|$ for $j=1,2$. This completes the proof of the Lipschitz continuity of $f_1^\vartheta,\cdots, f_6^\vartheta$.

Thus, by choosing suitable perturbations $V_i$, $i=1,2,3$, at the point $(t_0,Y_0)$ and $\vartheta=0$, using Lemma \ref{Lemma 4.2}, we can get 
\begin{equation*}
\text{ \bf rank } D_\vartheta\left(                   
\begin{array}{c}  
    \alpha\\  
 \alpha_Y\\ 
     \alpha_{YY}\\
  \end{array}
\right) =3.
\end{equation*}

On the other hand, \eqref{4.1}, \eqref{4.8} and \eqref{4.10} form a complete system. Similar to the proof of \eqref{4.6}, we can choose suitable perturbations $V_i$, $i=1,2,3$, such that, at the point $(t_0,Y_0)$ and $\vartheta=0$, we have
 \begin{equation*}
\text{ \bf rank } D_\vartheta\left(                   
\begin{array}{c}  
    \alpha\\  
 \alpha_Y\\ 
     \alpha_{t}\\
  \end{array}
\right) =3.
\end{equation*}
This completes the proof  of Lemma \ref{Lemma 4.3}.
\end{proof}

\subsection{Proof of Theorem \ref{thm_generic}}\label{sub4.2}
Now, we will use Lemma \ref{Lemma 4.3} together with Transversality argument to  study the smooth solutions to the semi-linear system \eqref{4.1}, and hence determine the generic structure of the level sets $\{(t,Y); \alpha(t,Y)=\pi\}$. The proof of the following lemma is similar to \cite{LZ}, and we omit it here for brevity. 
\begin{Lemma}\label{Lemma 4.4}
Let a compact domain $$\Omega:=\{(t,Y); ~0\leq t\leq T,|Y|\leq M\},$$
and define $S$ to be the family of all $\mathcal{C}^2$ solutions  $(u,\alpha,\xi)$ to the semi-linear system \eqref{4.1}, with $\xi>0$ for all $(t,Y)\in\mathbb{R}^+\times\mathbb{R}$. Moreover, define $S'\subset S$ to be the subfamily of all solutions $(u,\alpha,\xi)$, such that for $(t,Y)\in\Omega$, none of the following values is attained:
\begin{equation}\label{4.13}
(\alpha,\alpha_Y,\alpha_{YY})=(\pi,0,0),\quad (\alpha,\alpha_Y,\alpha_t)=(\pi,0,0).
\end{equation}
Then $S'$ is a relatively open and dense subset of $S$, in the topology induced by $\mathcal{C}^2(\Omega)$.
\end{Lemma}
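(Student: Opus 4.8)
}
The plan is to apply Thom's Transversality Theorem to the finite-dimensional jet map associated with the semi-linear system \eqref{4.1}, using the families of perturbations constructed in Lemma \ref{Lemma 4.3} to guarantee transversality. First I would set up the right functional framework: the space $S$ of $\mathcal{C}^2$ solutions $(u,\alpha,\xi)$ with $\xi>0$ is, by Theorem \ref{thm_exist} and Lemma \ref{Lemma 4.1}, parametrized smoothly by the initial data $(u(0,\cdot),\alpha(0,\cdot),\xi(0,\cdot))$ through the well-posedness of \eqref{4.1}--\eqref{4.3}, so a perturbation of a solution in $S$ is realized by a perturbation of its initial data, and the topology induced by $\mathcal{C}^2(\Omega)$ is the relevant one. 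Openness of $S'$ is the easy half: the two conditions in \eqref{4.13} are closed conditions on the $2$-jet of $\alpha$ over the compact set $\Omega$, and since $(t,Y)\mapsto(\alpha,\alpha_Y,\alpha_{YY},\alpha_t)(t,Y)$ depends continuously on the solution in the $\mathcal{C}^2(\Omega)$ topology, the set of solutions avoiding these finitely many ``bad'' values on $\Omega$ is open.

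For density, I would argue as follows. Fix a solution $(u,\alpha,\xi)\in S$ and a small $\delta>0$; we want a $\delta$-close solution in $S'$. Consider the two auxiliary maps $\Phi_1:(t,Y)\mapsto(\alpha-\pi,\alpha_Y,\alpha_{YY})(t,Y)\in\mathbb{R}^3$ and $\Phi_2:(t,Y)\mapsto(\alpha-\pi,\alpha_Y,\alpha_t)(t,Y)\in\mathbb{R}^3$ on the $2$-dimensional domain $\Omega$. The condition \eqref{4.13} says precisely that $0\in\mathbb{R}^3$ is attained by $\Phi_1$ or $\Phi_2$. Since the domain has dimension $2$ and the target dimension is $3$, transversality to the point $\{0\}$ means \emph{not hitting} $0$ at all; so it suffices to perturb the solution so that both perturbed maps miss the origin. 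This is where Lemma \ref{Lemma 4.3} enters: at any point $(t_0,Y_0)$ where $\Phi_1$ (resp.\ $\Phi_2$) vanishes, we have a $3$-parameter family $(u^\vartheta,\alpha^\vartheta,\xi^\vartheta)$ with $D_\vartheta\Phi_1$ (resp.\ $D_\vartheta\Phi_2$) of full rank $3$ at $\vartheta=0$. Hence the enlarged map $(\vartheta,t,Y)\mapsto\Phi_1^\vartheta(t,Y)$ is a submersion onto $\mathbb{R}^3$ near such points, so it is transversal to $\{0\}$; Thom's Transversality Theorem (in the parametric form, e.g.\ as in \cite{LZ}) then yields that for a dense—indeed residual—set of parameters $\vartheta$ arbitrarily close to $0$, the map $\Phi_1^\vartheta$ is transversal to $\{0\}$ on $\Omega$, i.e.\ avoids $0$. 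Running the same argument for $\Phi_2^\vartheta$ and intersecting the two dense sets (or perturbing in two stages, first removing the zeros of $\Phi_1$ by an open-dense set of perturbations, then those of $\Phi_2$ while staying in that open set) produces a solution in $S'$ within $\delta$ of the original. Because $\Omega$ is compact, finitely many local perturbation families suffice, and their effect on the $\mathcal{C}^2(\Omega)$ norm can be made as small as desired by shrinking the supports and amplitudes of the $U_i,V_i,\zeta_i$.

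The main obstacle I anticipate is not the abstract transversality bookkeeping but verifying that the perturbation-to-solution map is genuinely smooth (or at least $\mathcal{C}^1$) in $\vartheta$ in the $\mathcal{C}^2(\Omega)$ topology, so that Thom's theorem is applicable and the rank condition \eqref{4.6}--\eqref{4.7} at $\vartheta=0$ actually propagates to a neighborhood. This requires differentiating the semi-linear system \eqref{4.1} together with the extended equations \eqref{4.8}--\eqref{4.11} in the parameter, and controlling the two nonlocal terms $P_j,\partial_x P_j$ and their $Y$- and $t$-derivatives; the Lipschitz (and higher) dependence of these nonlocal quantities on $(u,\alpha,\xi)$ was already established in the proof of Lemma \ref{Lemma 4.3}, so the needed estimates are in place, but assembling them into the required smooth dependence of the full jet map is the technical heart. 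Since this is entirely parallel to the Camassa--Holm treatment in \cite{LZ}, I would state the result and refer to \cite{LZ} for the routine details, as the excerpt indicates.
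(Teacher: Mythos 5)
Your proposal correctly reconstructs the argument the paper delegates to \cite{LZ} (itself following \cite{BC}): openness from compactness of $\Omega$ plus $\mathcal{C}^2$-continuity, and density from a parametric transversality/Sard-type argument that uses the full-rank perturbation families of Lemma \ref{Lemma 4.3} together with the dimension count $\dim\Omega = 2 < 3$. The bookkeeping you flag---combining finitely many local families over the compact set $\Omega$, verifying $\mathcal{C}^1$-dependence of the jet map on the perturbation parameter, and shrinking the parameter ball so that transversality of the extended map holds on its whole zero set rather than only at $\vartheta=0$---is exactly what a complete version of this proof must supply, so your approach matches the one the paper intends.
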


With the help of Lemma \ref{Lemma 4.4}, we can now prove Theorem \ref{thm_generic} for the generic regularity of conservative solutions to the Novikov equation \eqref{2.1}.

\begin{proof}[Proof of Theorem \ref{thm_generic}]
First, we denote $$\mathcal{N}:=\mathcal{C}^3(\mathbb{R})\cap H^1(\mathbb{R})\cap W^{1,4}(\mathbb{R}),$$ with norm $$\|u_0\|_{\mathcal{N}}:=\|u_0\|_{\mathcal{C}^3}+\|u_0\|_{H^1}+\|u_0\|_{W^{1,4}}.$$
Given initial data $\hat{u}_0\in\mathcal{N}$ and denote the open ball 
\[B_\delta:=\{u_0\in\mathcal{N};~\|u_0-\hat{u}_0\|_{\mathcal{N}}<\delta\}.\]
Now, we will prove our theorem by showing that, for any $\hat{u}_0\in\mathcal{N}$, there exists a radius $\delta>0$ and an open dense subset $\tilde{\mathcal{D}}\subset B_{\delta}$, with the following property: for every initial data $u_0\in\tilde{\mathcal{D}}$, the conservative solution $u=u(t,x)$ of \eqref{2.1} is twice continuously differentiable in the complement of finitely many characteristic curves within the domain $[0,T]\times\mathbb{R}$.

{\bf 1.} Since $u_0\in \mathcal{N}$, by the definition of the space $\mathcal{N}$, we have
$$u_0(x)\to 0\quad \text{ and }\quad u_{0,x}(x)\to 0, \quad\text{ as }|x|\to\infty.$$
Thus, we can choose $r>0$ sufficiently large, such that $u_0(x)$ and $u_{0,x}(t,x)$ being uniformly bounded for all $|x|\geq r$. On a domain of the form $\{(t,x);~ t\in[0,T], |x|\geq r+T\|u\|_{L^\infty}^2\}$, a standard comparison argument yields that the partial derivative $u_x$ remains uniformly bounded. This implies that the singularities of $u(t,x)$ in the set $[0,T]\times\mathbb{R}$ only appear on the compact set $\mathcal{M}:=[0,T]\times [-r-T\|u\|_{L^\infty}^2, r+T\|u\|_{L^\infty}^2].$

Next, for any $u_0\in B_{\delta}$, let $\Lambda$ be the map $(t,Y)\mapsto \Lambda(t,Y):= (t,x(t,Y))$, and let $\Omega$ be a domain as in Lemma \ref{Lemma 4.4}. Choosing $M$ large enough and by possibly shrinking the radius $\delta$, we can obtain the inclusion $\mathcal{M}\subset \Lambda(\Omega)$.

Now, the subset $\tilde{\mathcal{D}}\subset B_\delta$ is defined as follows. $u_0\in \tilde{\mathcal{D}}$ if $u_0\in B_\delta$ and for the corresponding solution $(u,\alpha,\xi)$ of \eqref{4.1}, the values \eqref{4.13} are never attained for any $(t,Y)$ such that $(t,x(t,Y))\in\mathcal{M}.$ In the next two steps, we examine $\tilde{\mathcal{D}}$ is an open dense set.

{\bf 2.} To prove that the set $\tilde{\mathcal{D}}$ is open in the topology of $\mathcal{C}^3$, we choose a sequence of initial data $(u_0^\nu)_{\nu\geq 1}$ such that the sequence converges to $u_0$, with $u_0^\nu\notin \tilde{\mathcal{D}}$. By the definition of $\tilde{\mathcal{D}}$, there exist points $(t^\nu,Y^\nu)$ at which the corresponding solutions $(u^\nu,\alpha^\nu,\xi^\nu)$ satisfy
\[(\alpha^\nu,\alpha_Y^\nu,\alpha_{YY}^\nu)(t^\nu,Y^\nu)=(\pi,0,0), \quad (t^\nu,x^\nu(t^\nu,Y^\nu))\in \mathcal{M}, \]
for all $\nu\geq 1.$ Observe that the domain $\mathcal{M}$ is compact, thus we can choose a subsequence, denote still by $(t^\nu,Y^\nu)$, such that $ (t^\nu,Y^\nu)\to (\bar{t},\bar{Y})$ for some point $(\bar{t},\bar{Y})$. By continuity, 
\[(\alpha,\alpha_Y,\alpha_{YY})(\bar{t},\bar{Y})=(\pi,0,0), \quad (t,x(\bar{t},\bar{Y}))\in\mathcal{M},\]
which implies $u_0\notin \tilde{\mathcal{D}}$. Repeating the same procedure when $(\alpha,\alpha_Y,\alpha_T)=(\pi,0,0)$, it follows that $\tilde{\mathcal{D}}$ is open.

{\bf 3.} Now, we will prove the set $\tilde{\mathcal{D}}$ is dense in $B_\delta$. Given $u_0\in B_\delta$, by a small perturbation, we can assume that $u_0\in\mathcal{C}^\infty$.

From Lemma \ref{Lemma 4.4}, we can construct a sequence of solutions $(u^\nu,\alpha^\nu,\xi^\nu)$ of \eqref{4.1}, such that, for every $\nu\geq 1, (t,Y)\in\Omega$, the values in \eqref{4.13} are never attained, and the $\mathcal{C}^k, k\geq 1$ norm satisfies
\begin{equation*}
\lim_{\nu\to\infty}\|(u^\nu-u,\alpha^\nu-\alpha,\xi^\nu-\xi,x^\nu-x)\|_{\mathcal{C}^k(\Gamma)}=0,
\end{equation*}
for every bounded set $\Gamma\subset [0,T]\times\mathbb{R}$. Thus, for $t=0$, the corresponding sequence of initial values satisfies
\begin{equation}\label{4.14}
\lim_{\nu\to\infty}\|u^\nu_0-u_0\|_{\mathcal{C}^k(I)}=0,
\end{equation}
for every bounded set $I\subset \mathbb{R}$.

Consider a cutoff function $\psi(x)\in\mathcal{C}_0^\infty$, such that
\begin{equation*}\begin{split}
&\psi(x)=1,\qquad \text{ if } |x|\leq l,\\
&\psi(x)=0,\qquad  \text{ if }  |x|\geq l+1,
\end{split}\end{equation*}
where $l\gg r+T\|u\|_{L^\infty}^2$ is large enough. Then for every $\nu\geq 1$, define the following initial data
\[\tilde{u}_0^\nu:=\psi u_0^\nu+(1-\psi)u_0,\]
which together with \eqref{4.14} implies
\begin{equation*}
\lim_{\nu\to\infty}\|\tilde{u}_0^\nu-u_0\|_{\mathcal{N}}=0.
\end{equation*}
Further enlarge $l$ such that for every $(t,x)\in \mathcal{M}$,
\[\tilde{u}^\nu(t,x)=u^\nu(t,x).\]
Notice that $\tilde{u}^\nu(t,x)$ remains $\mathcal{C}^2$ in the outer domain $\{(t,x); ~t\in [0,T ], |x|\geq r+T\|u\|_{L^\infty}^2\}$. Thus, we have proved for every $\nu\geq 1$ sufficiently large, $\tilde{u}_0^\nu\in\tilde{D}$, which means that $\tilde{D}$ is dense in $B_\delta$.

{\bf 4.} Finally, we will show that for every initial data $u_0\in\tilde{D}$, the corresponding solution $u(t,x)$ of \eqref{2.1} is piecewise $\mathcal{C}^2$ on the domain $[0,T]\times\mathbb{R}$. Indeed, we know that $u$ is $\mathcal{C}^2$ in the outer domain $\{(t,x); ~t\in [0,T ], |x|\geq r+T\|u\|_{L^\infty}^2\}$ by the previous argument. So we need to study the singularity of $u$ in the inner domain $\mathcal{M}$. 

Recall from step  1, every point in $\mathcal{M}$ is contained in the image of the domain $\Omega$. Thus, for every point $(t_0,Y_0)\in\Omega$, we have two cases.

{\it Case I.} $\alpha(t_0,Y_0)\neq \pi$. From the coordinate change $x_Y=\xi\cos^4 \frac{\alpha}{2}, t=t$, we know that the map $(t,Y)\mapsto (t,x)$ is locally invertible in a neighborhood of $(t_0,Y_0)$. Therefore, the function $u$ is $\mathcal{C}^2$ in a neighborhood of $(t_0,x(t_0,Y_0))$.

{\it Case II.} $\alpha(t_0,Y_0)=\pi$. Since $u_0\in\tilde{D}$, so by the definition of $\tilde{D}$, we have $\alpha_t(t_0,Y_0)\neq 0$ or $\alpha_Y(t_0,Y_0)\neq 0$.

{\bf 5.} By continuity, there exists an $\eta>0,$ such that the values in \eqref{4.13} are never attained in the open neighborhood $$\Omega' :=\{(t,Y); ~0\leq t\leq T,|Y|\leq M+\eta\}.$$
Applying the implicit function theorem, we derive that the set \[S^\alpha:=\{(t,Y)\in\Omega'; ~\alpha(t,Y)=\pi\}\] is a one-dimensional embedded manifold of class $\mathcal{C}^2$. 

Now, we claim that the number of connected components of $S^\alpha$ that intersect the compact set $\Omega$ is finite. Assume, by contradiction, that $P_1, P_2,\cdots$ is a sequence of points in $S^\alpha\cap \Omega$ belonging to distinct components. Thus, we can choose a subsequence $P_i$, such that $P_i\to \bar{P}$ for some $\bar{P}\in S^\alpha\cap\Omega$.  By assumption, $(\alpha_t,\alpha_Y)(\bar{P})\neq (0,0)$. Hence, by the implicit function theorem, there is a neighborhood $\mathcal{U}$ of $\bar{P}$ such that $\gamma:=S^\alpha\cap\mathcal{U}$ is a connected $\mathcal{C}^2$ curve. Thus, $P_i \in \gamma$ for $i$ large enough, providing a contradiction.

{\bf 6.} To complete the proof, we need to study in more detail the image of the singular set $S^\alpha$, since the set of points $(t,x)$ where $u$ is singular coincides with the image of the set $S^\alpha$ under the $C^2$ map $(t,Y)\mapsto \Lambda(t,Y)= (t,x(t,Y))$. 

By the argument in step 5, inside the compact set $\Omega$, there are only finitely many points  where $\alpha=\pi, \alpha_Y=0, \alpha_t\neq 0$, say $P_i=(t_i,Y_i), i=1,\cdots, m$, and also where $\alpha=\pi, \alpha_t=0,\alpha_Y\neq 0$, say $Q_\jmath=(t_\jmath',Y_\jmath'), \jmath=1,\cdots, n$. 

By the analysis in step 5, the set $S^\alpha \verb|\| \{P_1,\cdots,P_m,Q_1,\cdots,Q_n\}$ has finitely many connected components which intersect $\Omega$. Consider any one of these components. This is a connected curve, say $\gamma_j$, such that $\alpha=\pi, \alpha_Y\neq 0$ for any $(t,Y)\in \gamma_j$. Thus, this curve can be expressed in the form $$\gamma_j=\{(t,Y):~Y=\phi_j(t), a_j<t<b_j\},$$ for a suitable function $\phi_j$. 

At this stage, we claim that the image $\Lambda(\gamma_j)$ is a $\mathcal{C}^2$ curve in the $t$--$x$ plane. Indeed, it suffices to show that, on the open interval $(a_j,b_j)$, the differential of the map $t\mapsto (t,x(t,\phi_j(t)))$ does not vanish. This is true, because 
\[\frac{d}{dt}x(t,\phi_j(t))=1+x_Y\phi'_j=1>0,\]
since $x_Y=\xi\cos^4\frac{\alpha}{2}=0 $ when $\alpha=\pi$. Hence, the singular set $\Lambda(S^\alpha)$ is the union of the finitely many points $p_i=\Lambda(P_i), i=1,\cdots, m,$ $ q_\jmath=\Lambda(Q_\jmath), \jmath=1,\cdots,n,$ together with finitely many $\mathcal{C}^2$--curve $\Lambda(\gamma_j)$.  This completes the proof of Theorem \ref{thm_generic}.
\end{proof}

\subsection{One-parameter families of solutions}
In this subsection, we study families of conservative solutions $u^\theta=u(t,x,\theta)$ of \eqref{2.1} with initial data $u(0,x,\theta)=u_0(x,\theta)$, depending smoothly on an additional parameter $\theta\in[0,1]$, 
More precisely, these paths of initial data will lie in the space
\[\mathcal{X}:=\mathcal{C}^3\big(\mathbb{R}\times[0,1]\big)\cap L^\infty\big([0,1];H^1(\mathbb{R})\cap W^{1,4}(\mathbb{R})\big).\]
Now, we have the following generic regularity for one-parameter family of solution. Roughly speaking, for a one-parameter family of initial data $\theta\mapsto \hat{u}_0^\theta$, with $\theta\in[0,1]$, it can be uniformly approximated by a second path of initial data $\theta\mapsto u_0^\theta$, such that the corresponding solutions $u^\theta=u^\theta(t,x)$ of \eqref{2.1} are piecewise smooth in the domain $[0,T]\times\mathbb{R}$. The proof is similar to \cite{BC}, and hence we omit it here for brevity. Note that this is the step where
Thom's Transversality Theorem is used. We refer the reader to \cite{BC} for the Thom's Transversality Theorem.

\begin{Theorem}\label{thm_para}
Let $T>0$ be given, then for any one-parameter family of initial data $\hat{u}_0^\theta\in\mathcal{X}$ and any $\varepsilon>0$, there exists a perturbed family $(x,\theta)\mapsto u_0(x,\theta)=:u^\theta_0(x)$ such that \[\|u_0^\theta-\hat{u}_0^\theta\|_{\mathcal{X}}<\varepsilon,\]
and moreover, for all except at most finitely many $\theta\in[0,1]$, the conservative solution $u^\theta=u(t,x,\theta)$ of \eqref{2.1} is smooth in the complement of finitely many points and finitely many $\mathcal{C}^2$ curves in the domain $[0,T]\times\mathbb{R}$.
\end{Theorem}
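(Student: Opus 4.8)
The plan is to run the same transversality scheme used for Theorem \ref{thm_generic}, but on the three--dimensional parameter space $(t,Y,\theta)$ rather than on $(t,Y)$, so that the exceptional conditions in \eqref{4.13}, now read as three scalar equations in three variables, generically cut out only isolated points. First I would reduce to the semi--linear system \eqref{4.1}. Exactly as in Steps 1--3 of the proof of Theorem \ref{thm_generic}, a comparison argument shows that each $u^\theta$ stays $\mathcal{C}^2$ in the outer region $\{|x|\ge r+T\|u\|_{L^\infty}^2\}$ uniformly in $\theta$; after gluing the perturbed path to the original one outside a large ball by means of a fixed cutoff (which changes $\|u_0^\theta-\hat u_0^\theta\|_{\mathcal{X}}$ only by a controlled amount) it suffices to work on a fixed compact box $\Omega=\{0\le t\le T,\ |Y|\le M\}$ in the new coordinates. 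By Lemma \ref{Lemma 4.1} the singular set of $u^\theta$ in the $(t,x)$--plane is the image of $\{\alpha^\theta=\pi\}$ under the coordinate map $(t,Y)\mapsto(t,x^\theta(t,Y))$, along which $x_Y=\xi\cos^4(\alpha/2)=0$.

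Next I would build a finite--dimensional family of perturbations. Solving \eqref{4.1}--\eqref{4.4} for every $\theta$ produces a $\theta$--family $(u,\alpha,\xi)(t,Y,\theta)$, jointly smooth off the singular set; I augment the unknowns by $\alpha_Y,\alpha_{YY},\alpha_t,\xi_Y$, whose closed evolution equations are \eqref{4.8}--\eqref{4.11}, and introduce an $N$--parameter perturbation $\vartheta\in\R^N$ of the initial data $\bigl(u_0(\cdot,\theta),\alpha_0(\cdot,\theta),\xi_0(\cdot,\theta)\bigr)$ built from functions in $\mathcal{C}_0^\infty(\R\times[0,1])$. The Lipschitz bounds on $P_j,\partial_x P_j$ and their derivatives in $(u,\alpha,\xi)$ established in Lemma \ref{Lemma 4.3} remain valid, so Lemma \ref{Lemma 4.2} propagates in time the rank of $D_\vartheta(\alpha,\alpha_Y,\alpha_{YY},\alpha_t)$; choosing the generators of $\vartheta$ suitably, one arranges that at every point of $\Omega$ and every $\theta$, evaluated at $\vartheta=0$, the relevant jet maps attain maximal rank. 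This is the $\theta$--dependent analogue of Lemma \ref{Lemma 4.3}.

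Then I would apply Thom's Transversality Theorem, in the form used in \cite{BC}, to the maps $\Phi_1=(\alpha-\pi,\alpha_Y,\alpha_{YY})$ and $\Phi_2=(\alpha-\pi,\alpha_Y,\alpha_t)$ on $\Omega\times[0,1]$: for $\vartheta$ in a residual, hence dense, set the perturbed $\Phi_1,\Phi_2$ are transverse to $\{0\}\subset\R^3$, and since the domain is three--dimensional their zero sets are then $0$--dimensional, hence finite by compactness. Off these finitely many points the implicit function theorem shows that $\Sigma:=\{\alpha=\pi\}\subset\Omega\times[0,1]$ is a $\mathcal{C}^2$ two--dimensional submanifold on which $(\alpha_Y,\alpha_t)\ne(0,0)$. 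A further transversality requirement, now at the level of $2$--jets, forces the restriction to $\Sigma$ of the projection $(t,Y,\theta)\mapsto\theta$ to be a Morse function, so that its critical points are isolated, hence finite, and contribute only finitely many critical values; the same higher--order transversality guarantees that, on $\Sigma$, the zeros of $\alpha_Y$ along the fibres are simple. Let $\mathcal{B}\subset[0,1]$ be the finite union of those critical values together with the $\theta$--coordinates of the points in $\Phi_1^{-1}(0)\cup\Phi_2^{-1}(0)$. For $\theta\notin\mathcal{B}$ the slice $\{(t,Y):\alpha^\theta(t,Y)=\pi\}$ is a one--dimensional $\mathcal{C}^2$ submanifold of $\Omega$ away from finitely many points where $\alpha_Y^\theta=0$, and, exactly as in Steps 5--6 of the proof of Theorem \ref{thm_generic}, it has finitely many components $Y=\phi_j(t)$, each mapped to a $\mathcal{C}^2$ curve in the $t$--$x$ plane because the differential of $t\mapsto(t,x^\theta(t,\phi_j(t)))$ does not vanish ($x_Y$ being zero on $\{\alpha=\pi\}$), while the exceptional points map to points. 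Together with the regularity in the outer region and the density estimate from the cutoff construction, this gives the theorem.

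The step I expect to be the main obstacle is precisely the passage from ``generic $\theta$'' to ``all but finitely many $\theta$''. Ruling out the single--point conditions \eqref{4.13} is essentially identical to Theorem \ref{thm_generic}; what is genuinely delicate is controlling the projection $\Sigma\to[0,1]$ at second order, which forces one to apply Thom transversality to a jet map of order two and, correspondingly, to produce a perturbation family with enough independent directions for all the required rank conditions to hold simultaneously. Verifying that such a family exists --- the $\theta$--dependent counterpart of the rank computation of Lemma \ref{Lemma 4.3}, together with the bookkeeping of finitely many jet conditions --- is the technical heart of the argument, and is exactly the part worked out in \cite{BC}; this is why the proof is only sketched here.
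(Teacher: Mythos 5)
Your proposal follows the same route that the paper intends (and explicitly delegates to \cite{BC}): lift the argument of Theorem~\ref{thm_generic} from the two--dimensional domain $(t,Y)$ to the three--dimensional domain $(t,Y,\theta)$, perturb the initial data by a finite--dimensional family built from $\mathcal{C}^\infty_0(\R\times[0,1])$ functions, use the analogue of Lemmas~\ref{Lemma 4.2}--\ref{Lemma 4.3} together with a compactness/covering argument to verify the rank hypotheses, and invoke Thom's Transversality Theorem so that the codimension--three sets $\Phi_1^{-1}(0)$ and $\Phi_2^{-1}(0)$ become finite collections of points, whose $\theta$--projections form the finite exceptional set; the outer--region comparison argument and the cutoff gluing then give the $\mathcal{X}$--density. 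This is essentially the argument behind the theorem. One remark: the extra step in which you impose a second--order jet transversality to make the projection $\Sigma\to[0,1]$ a Morse function is not needed for the statement at hand. Once $\theta$ avoids the finite set of $\theta$--coordinates of $\Phi_1^{-1}(0)\cup\Phi_2^{-1}(0)$, the slice $\{(t,Y):\alpha^\theta=\pi\}$ already has the properties exploited in Steps~5--6 of the proof of Theorem~\ref{thm_generic}: at any slice point with $\alpha^\theta=\pi$ and $\alpha^\theta_Y=0$ one automatically has $\alpha^\theta_t\neq0$ and $\alpha^\theta_{YY}\neq0$, the latter making $\alpha^\theta_Y$ have a simple zero along the singular curve, which together with compactness already yields finitely many components and finitely many exceptional points. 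So the Morse/2--jet condition is superfluous (though harmless); the basic first--order transversality to $\{0\}\subset\R^3$ is all that is required, exactly as in \cite{BC}.
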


In accordance with the previous argument, we give the following definitions.
\begin{Definition}\label{Definition 4.1}
We say that a solution $u=u(t,x)$ of \eqref{2.1} has generic singularities for $t\in[0,T]$ if it admits a representation of the form \eqref{4.5}, where 

{\rm (i)} the functions $(x,u,\alpha,\xi)(t,Y)$ are $\mathcal{C}^\infty$,

{\rm (ii)} for $t\in[0,T]$, the generic condition 
\begin{equation}\label{generic_con}
\alpha=\pi, \alpha_Y=0 \Longrightarrow\alpha_t\neq 0,\alpha_{YY}\neq 0,
\end{equation}
holds.
\end{Definition}

\begin{Definition}\label{Definition 4.2}
We say that a path of initial data $\gamma^0:\theta\mapsto u_0^\theta$, $\theta\in[0,1]$ is a piecewise regular path if the following conditions hold

{\rm (i)} There exists a continuous map $(Y,\theta)\mapsto (x,u,\alpha,\xi)$ such that the semi-linear system \eqref{4.1}--\eqref{4.3} holds for $\theta\in[0,1]$, and the function $u^\theta(x,t)$ whose graph is 
\begin{equation*}
\text{ Graph }(u^\theta)=\{(t,x(t,Y,\theta),u(t,Y,\theta));~(t,Y)\in \R^+\times\mathbb{R}\}
\end{equation*}
provides the conservation solution of \eqref{2.1} with initial data $u^\theta(0,x)=u^\theta_0(x)$.

{\rm (ii)} There exist finitely many values $0=\theta_0<\theta_1<\cdots<\theta_N=1$ such that the map $(Y,\theta)\mapsto (x,u,\alpha,\xi)$ is $\mathcal{C}^\infty$ for $\theta\in(\theta_{i-1},\theta_i), i=1,\cdots, N$, and the solution $u^\theta=u^\theta(t,x)$ has only generic singularities at time $t=0$.

In addition, if for all $\theta\in[0,1]\backslash\{\theta_1,\cdots,\theta_N\}$, the solution $u^\theta$ has generic singularities for $t\in [0,T]$, then we say that the path of solution $\gamma^t: \theta\mapsto u^\theta$ is {\bf piecewise regular} for $t\in[0,T]$.
\end{Definition}


An application of Theorem \ref{thm_para} gives the following density result.
\begin{Corollary}\label{Corollary 4.1}
Given $T>0,$ let $\theta\mapsto(x^\theta, u^\theta,\alpha^\theta,\xi^\theta), \theta\in[0,1],$ be a smooth path of solutions to the system \eqref{4.1}--\eqref{4.3}. Then there exists a sequence of paths of solutions $\theta\mapsto (x^\theta_n, u^\theta_n,\alpha^\theta_n,\xi^\theta_n),$ such that 

{\rm (i)} For each $n\geq 1$, the path of the corresponding solution of \eqref{2.1} $\theta\mapsto u_n^\theta$ is regular for $t\in[0,T]$ in the sense of Definition \ref{Definition 4.2}.

{\rm (ii)} For any bounded domain $\Omega$ in the $(t$,$Y)$ space, the functions $(x^\theta_n,u^\theta_n,\alpha^\theta_n,\xi^\theta_n)$ converge to $(x^\theta, u^\theta,\alpha^\theta,\xi^\theta)$ uniformly in $\mathcal{C}^k([0,1]\times\Omega)$, for every $k\geq 1$, as $n\to \infty.$
\end{Corollary}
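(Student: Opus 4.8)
The plan is to deduce the statement from Theorem \ref{thm_para}, by repackaging that result in the language of the semi-linear system \eqref{4.1}--\eqref{4.3}. First I would evaluate the given smooth path $\theta\mapsto(x^\theta,u^\theta,\alpha^\theta,\xi^\theta)$ at $t=0$ and push it forward to the original coordinate $x$; by Lemma \ref{Lemma 4.1} this produces a smooth one-parameter family of conservative solutions of \eqref{2.1}, whose initial data $\theta\mapsto\hat u_0^\theta$ lie in $\mathcal{X}$. Then, for each $n\geq1$, I would apply Theorem \ref{thm_para} with $\varepsilon=1/n$ to obtain a perturbed family $\theta\mapsto u_{0,n}^\theta$ with $\|u_{0,n}^\theta-\hat u_0^\theta\|_{\mathcal{X}}<1/n$ such that, for all but finitely many $\theta$, the conservative solution $u_n^\theta$ of \eqref{2.1} is smooth off finitely many points and finitely many $\mathcal{C}^2$ curves in $[0,T]\times\mathbb{R}$. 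Since the transversality perturbation is assembled from smooth cut-offs and finite-dimensional families (as in \cite{BC}), one may in addition require $u_{0,n}^\theta\in\mathcal{C}^\infty$ and $\|u_{0,n}^\theta-\hat u_0^\theta\|_{\mathcal{C}^k(I\times[0,1])}\to0$ for every bounded $I\subset\mathbb{R}$ and every $k\geq1$.

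Next I would lift these perturbed data back to the semi-linear system: set $\alpha_n^\theta(0,Y)=2\arctan\partial_x u_{0,n}^\theta$ and $\xi_n^\theta(0,Y)=1$ as in \eqref{4.3}, and solve \eqref{4.1}. Using the well-posedness of \eqref{4.1}--\eqref{4.3} from \cite{CCL} together with the bounded/Lipschitz dependence of $P_j$ and $\partial_xP_j$ on $(u,\alpha,\xi)$ already recorded in the proof of Lemma \ref{Lemma 4.3}, the resulting $(x_n^\theta,u_n^\theta,\alpha_n^\theta,\xi_n^\theta)$ is continuous in $(Y,\theta)$, is $\mathcal{C}^\infty$ on each $\theta$-subinterval on which the data are, and (by Lemma \ref{Lemma 4.1}) has graph equal to the conservative solution $u_n^\theta$ of \eqref{2.1}. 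The transversality argument underlying Theorem \ref{thm_para} in fact enforces the generic condition \eqref{generic_con} throughout $[0,T]$ for all but finitely many $\theta$, together with only generic singularities at $t=0$; hence $\theta\mapsto u_n^\theta$ is a piecewise regular path for $t\in[0,T]$ in the sense of Definition \ref{Definition 4.2}, which is assertion (i).

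For assertion (ii), fix a bounded domain $\Omega$ in the $(t,Y)$ plane. Because \eqref{4.1} is semi-linear and the source terms have $(u,\alpha,\xi)$-derivatives bounded in terms of $\mathcal{E}(0)$ and $\mathcal{F}(0)$ (again from the estimates in the proof of Lemma \ref{Lemma 4.3}), I would differentiate the system in $(Y,\theta)$ up to order $k$ and apply Gronwall's inequality to conclude that the solution map is continuous from $\mathcal{C}^k$ initial data to $\mathcal{C}^k(\Omega)$ solutions; combined with the $\mathcal{C}^k$ convergence of the initial data from the previous step, this yields $\|(x_n^\theta-x^\theta,\,u_n^\theta-u^\theta,\,\alpha_n^\theta-\alpha^\theta,\,\xi_n^\theta-\xi^\theta)\|_{\mathcal{C}^k([0,1]\times\Omega)}\to0$ for every $k\geq1$, which is assertion (ii).

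The hard part is the middle step: one must upgrade the relatively soft conclusion of Theorem \ref{thm_para} (smoothness off a negligible set for almost every $\theta$) to the structured notion of a piecewise regular path — finitely many exceptional parameters, full $\mathcal{C}^\infty$ regularity of $(Y,\theta)\mapsto(x,u,\alpha,\xi)$ between them, and the transversality condition \eqref{generic_con} along the singular curves. This is precisely where Thom's Transversality Theorem enters, and it is what forces one to perturb not merely $u$ but the jet $(\alpha,\alpha_Y,\alpha_{YY},\alpha_t)$, as set up in Lemma \ref{Lemma 4.3}. Propagating $\mathcal{C}^k$ convergence for all $k$ (rather than only the $\mathcal{C}^3$ afforded by the $\mathcal{X}$-norm) through the flow of \eqref{4.1} is a secondary technical point, handled by the same bounded-derivative estimates for the nonlocal terms.
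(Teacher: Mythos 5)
Your proposal follows the paper's own route: the paper states Corollary \ref{Corollary 4.1} directly as ``an application of Theorem \ref{thm_para}'' without further elaboration, and your argument is a careful unpacking of exactly that application — push the path to $t=0$ and to the $x$-variable via Lemma \ref{Lemma 4.1}, perturb with Theorem \ref{thm_para} at tolerance $1/n$, lift back to the semi-linear system, and use Gronwall-type $\mathcal{C}^k$-continuity of the flow of \eqref{4.1} for part (ii). You have also correctly flagged the two places where the literal statement of Theorem \ref{thm_para} is weaker than what the corollary asserts (the upgrade from $\mathcal{C}^3$-closeness in $\mathcal{X}$ to $\mathcal{C}^k$-closeness for all $k$, and the passage from ``smooth off finitely many points and curves'' to a piecewise regular path with the generic condition \eqref{generic_con}); these are indeed supplied by revisiting the construction in the proof of Theorem \ref{thm_para} and Lemma \ref{Lemma 4.3} rather than by the theorem's statement alone, which matches the paper's intent.
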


\section{Metric for general weak solutions}\label{sec_metric}
In this section, we will first extend the Lipschitz metric for smooth solutions in Section 3 to piecewise smooth
solutions, and then to general weak solutions using the generic regularity result established in Theorem \ref{thm_generic}. Through this Theorem \ref{thm_Lip geo} can be achieved. 
\subsection{Tangent vectors in transformed coordinates}\label{sec_transformed tangent}

For a reference solution $u(t,x)$ of \eqref{2.1} and a family of perturbed solutions $u^\varepsilon(t,x),$ we assume that, in the $(t$,$Y)$ coordinates, these define a family of smooth solutions of \eqref{4.1}--\eqref{4.3}, denoted by $(x^\varepsilon,u^\varepsilon,\alpha^\varepsilon,\xi^\varepsilon)$.

Consider the perturbed solutions of the form
\begin{equation}\label{3.7}
(x^\varepsilon,u^\varepsilon,\alpha^\varepsilon,\xi^\varepsilon)=(x,u,\alpha,\xi)+\varepsilon(X, U, A, \zeta)+o(\varepsilon).
\end{equation}
Since the coefficients of \eqref{4.1}--\eqref{4.3} are smooth, we have that the first order perturbations satisfy a linearized system and are well defined for $(t,Y)\in\mathbb{R}^+\times\mathbb{R}$. In the following, we will express the quantities in \eqref{2.17} in terms of $(X, U, A, \zeta)$. 

(1) The shift in $x$ is computed by
\begin{equation}\label{3.8}
w=\lim_{\varepsilon\to 0}\frac{x^\varepsilon(t,Y^\varepsilon)-x(t,Y)}{\varepsilon}
=\left. X+x_Y\cdot\frac{\partial Y^\varepsilon}{\partial \varepsilon}\RV_{\varepsilon=0}.
\end{equation}

(2) The change in $u$ is
\begin{equation}\label{3.9}
v+u_xw=\lim_{\varepsilon\to 0}\frac{u^\varepsilon(t,Y^\varepsilon)-u(t,Y)}{\varepsilon}
=\left. U+u_Y\cdot\frac{\partial Y^\varepsilon}{\partial \varepsilon}\RV_{\varepsilon=0}.
\end{equation}

(3) For the change in $\arctan u_x$, it holds that
\begin{equation}\label{3.10}
v_x+u_{xx}w=\frac{d}{d\varepsilon}\tan\frac{\alpha^\varepsilon(t,Y^\varepsilon)}{2}|_{\varepsilon=0}=\left. \frac{1}{2}\sec^2\frac{\alpha}{2} \LC A+\alpha_Y\cdot\frac{\partial Y^\varepsilon}{\partial \varepsilon}\RV_{\varepsilon=0}\RC.
\end{equation}

(4) Finally, we will derive an expression for the term $I_4$ in \eqref{2.17}. 
\begin{equation}\label{3.11}
\begin{split}
\left. \frac{d}{d\varepsilon}(\xi^\varepsilon(t,Y^\varepsilon)+\xi x_Y Y_x^\varepsilon)\RV_{\varepsilon=0}= \left. \zeta+\xi_Y\cdot\frac{\partial Y^\varepsilon}{\partial \varepsilon}\RV_{\varepsilon=0} + \left. \xi x_Y\cdot\frac{\partial Y_x^\varepsilon}{\partial \varepsilon}\RV_{\varepsilon=0},
\end{split}\end{equation}
where the change in the base measure with density $(1+u_x^2)^2$ is given by
\begin{equation*}
\begin{split}
\left. \frac{d}{d\varepsilon}\xi^\varepsilon\RV_{\varepsilon=0}=\lim_{\varepsilon\to 0}\frac{\xi^\varepsilon(t,Y^\varepsilon)-\xi(t,Y)}{\varepsilon}
=\left. \zeta+\xi_Y\cdot\frac{\partial Y^\varepsilon}{\partial \varepsilon}\RV_{\varepsilon=0}.
\end{split}\end{equation*}
Notice that \[(1+u^2_x)^2\,dx=\xi\,dY.\] In light of \eqref{3.8}--\eqref{3.11}, the weighted norm of a tangent vector \eqref{2.17} can be written as 
\begin{equation}\label{3.12}
\|(w, v)\|_u=\sum_{\ell=1}^4\int_{\mathbb{R}} |J_\ell(t,Y)|\,dY,
\end{equation}
where 
\begin{equation*}
\begin{split}
&J_1=\LC \left. X+x_Y\cdot\frac{\partial Y^\varepsilon}{\partial \varepsilon}\RV_{\varepsilon=0}\RC \xi  e^{-|x(t,Y)|},\quad \quad J_2=\LC \left. U+u_Y\cdot\frac{\partial Y^\varepsilon}{\partial \varepsilon}\RV_{\varepsilon=0}\RC \xi e^{-|x(t,Y)|},\\
&J_3=\frac{1}{2}\LC \left. A+\alpha_Y\cdot\frac{\partial Y^\varepsilon}{\partial \varepsilon}\RV_{\varepsilon=0}\RC \xi e^{-|x(t,Y)|},\ \ \ J_4=\LC\zeta+\left. \xi_Y\cdot\frac{\partial Y^\varepsilon}{\partial \varepsilon}\RV_{\varepsilon=0} + \left. \xi x_Y\cdot\frac{\partial Y_x^\varepsilon}{\partial \varepsilon}\RV_{\varepsilon=0}\RC e^{-|x(t,Y)|}.
\end{split}\end{equation*}
Since $Y^ \varepsilon(t)=Y^ \varepsilon(0)$ along the characteristic, it is easy to verify that the  integrands $J_\ell$ are smooth, for $\ell=1,2,3,4.$

\subsection{Length of piecewise regular paths}\label{sec_length}
Now we define the length of the piecewise regular path by optimizing over all transportation plans, namely,
\begin{Definition}\label{def5.1}
The length $\|\gamma^t\|$ of the piecewise regular path $\gamma^t: \theta\mapsto u^\theta(t)$ is defined as 
\begin{equation*}
    \|\gamma^t\|=\inf_{\gamma^t}\int_0^1\sum_{\ell=1}^4\int_{\mathbb{R}}|J_\ell^\theta(t,Y)|\,dY\,d\theta,
\end{equation*}
where the infimum is taken over all piecewise regular paths.
\end{Definition}
Our main result in this section is stated  as follows.
\begin{Theorem}\label{thm_length}
Given any $T>0$, consider a path of solutions $\theta\mapsto u^\theta$ of \eqref{2.1}, which is piecewise regular for $t\in[0,T]$. Moreover, the total energy $\|u^\theta\|^2_{H^1}$ 
and the norm $\|u^\theta_x\|_{L^4}^4$ is less than some constants $E_1>0$ and $E_2>0$, respectively . Then there exists some constant $C>0$, such that the length satisfies
\begin{equation}\label{6.1}
\|\gamma^t\|\leq C\|\gamma^0\|,
\end{equation}
where the constant $C$ depends only on $T, E_1$ and $E_2$.
\end{Theorem}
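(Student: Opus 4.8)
The plan is to reduce the estimate \eqref{6.1} to the infinitesimal growth bound already available for smooth solutions, namely Lemma \ref{lem_est}, and then handle the finitely many ``bad'' times at which either the path $\theta\mapsto u^\theta$ fails to be smooth (the values $\theta_1,\dots,\theta_N$ from Definition \ref{Definition 4.2}) or the individual solution $u^\theta(t,\cdot)$ crosses a generic singularity (where $\alpha=\pi$). For a fixed $\theta$ in one of the open subintervals $(\theta_{i-1},\theta_i)$, and for $t$ in a time interval avoiding the singular times of $u^\theta$, the tangent vector $(X,U,A,\zeta)(t,Y)$ solves the linearized version of the semi-linear system \eqref{4.1}, whose coefficients are smooth and bounded in terms of the energies $E_1,E_2$ (this boundedness is exactly the content of the Lipschitz estimates on $P_j,\partial_x P_j$ proved inside Lemma \ref{Lemma 4.3}). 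Hence one obtains a Gr\"onwall-type inequality
\[
{d\over dt}\sum_{\ell=1}^4\int_{\mathbb{R}}|J_\ell^\theta(t,Y)|\,dY \;\leq\; C\,\sum_{\ell=1}^4\int_{\mathbb{R}}|J_\ell^\theta(t,Y)|\,dY,
\]
with $C=C(T,E_1,E_2)$; in the original coordinates this is precisely \eqref{est on w and tilde v}, so the work of Section \ref{sec_tangent} can be quoted rather than redone. Integrating over $[0,t]$ and then over $\theta\in[0,1]$ and taking the infimum over admissible paths yields \eqref{6.1}, \emph{provided} the quantity $\sum_\ell\int|J_\ell^\theta|\,dY$ does not jump upward across the exceptional times.

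The crux is therefore to show that crossing a generic singularity does not increase the weighted norm of the tangent vector. Fix $\theta$ away from $\theta_1,\dots,\theta_N$ and let $t_*$ be a time at which $u^\theta$ has a generic singularity, so that on the singular set $\alpha=\pi$ one has $\alpha_Y=0$, $\alpha_t\neq0$, $\alpha_{YY}\neq0$. The key point is that in the $(t,Y)$ coordinates the solution $(x,u,\alpha,\xi)$ and its linearization $(X,U,A,\zeta)$ remain perfectly smooth across $t=t_*$ — the singularity only appears after mapping back through $\Lambda:(t,Y)\mapsto(t,x(t,Y))$, which degenerates where $x_Y=\xi\cos^4(\alpha/2)=0$. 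Since each integrand $J_\ell^\theta(t,Y)$ is a smooth function of $(t,Y)$ (as noted at the end of Section \ref{sec_transformed tangent}: $Y^\varepsilon$ is constant along characteristics, and all factors are smooth), the map $t\mapsto\sum_\ell\int_{\mathbb{R}}|J_\ell^\theta(t,Y)|\,dY$ is continuous — indeed Lipschitz — across $t_*$. Thus no upward jump occurs at singular times of a single solution, and the Gr\"onwall argument runs uninterrupted through all of $[0,T]$ for each such $\theta$.

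It remains to deal with the finitely many parameter values $\theta_1,\dots,\theta_N$ where the path itself is only piecewise smooth. Here I would argue that these contribute a set of measure zero to the $\theta$-integral in Definition \ref{def5.1}, and that on each closed subinterval $[\theta_{i-1},\theta_i]$ the bound $\int_0^1(\cdots)\,d\theta$ is obtained as a limit from the open subinterval using the continuity of $(x,u,\alpha,\xi)$ and of the linearized variables in $\theta$ on bounded $(t,Y)$-domains (Corollary \ref{Corollary 4.1}(ii) and the well-posedness of the linearized system). Concretely: for $\gamma^0$ and any $\delta>0$, pick a piecewise regular path whose length at $t=0$ is within $\delta$ of $\|\gamma^0\|$; apply the per-$\theta$ growth estimate on each open subinterval; pass to the limit in $\theta$ at the finitely many junctions; integrate in $\theta$; and let $\delta\to0$. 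The main obstacle — and the place requiring the most care — is precisely the interchange of the infimum over paths with the time-evolution: one must check that the family of competitor paths produced by flowing forward in time is still admissible (piecewise regular for $t\in[0,T]$), which is where Theorem \ref{thm_para} / Corollary \ref{Corollary 4.1} are essential, so that the optimal transport plan at time $t$ is controlled by a genuinely admissible plan at time $0$. Once admissibility is secured, the estimate \eqref{6.1} follows with $C=C(T,E_1,E_2)$ as claimed.
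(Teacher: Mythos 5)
Your high-level outline matches the paper's: reduce to the differential inequality \eqref{est on w and tilde v} of Lemma \ref{lem_est}, handle the finitely many parameter values $\theta_1,\dots,\theta_N$ by a measure-zero argument in the $\theta$-integral, and argue that the generic singularities of each individual $u^\theta$ do not spoil the estimate. However, there is a genuine gap in the way you treat the singular set, and it touches the core of the argument.

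You write as though the set where $u^\theta$ fails to be smooth consists of finitely many exceptional \emph{times} $t_*$ at which $(\alpha,\alpha_Y)=(\pi,0)$, and you conclude that since $\sum_\ell\int|J_\ell^\theta(t,Y)|\,dY$ is Lipschitz in $t$ (because the $J_\ell^\theta$ are smooth in $(t,Y)$), ``no upward jump occurs'' and ``the Gr\"onwall argument runs uninterrupted.'' This misidentifies the singular set. The level set $\{\alpha^\theta=\pi\}$ is a one-dimensional curve in $(t,Y)$-space, so for a whole \emph{range} of times $t$ (not finitely many) the slice $\{t\}\times\mathbb{R}$ meets the singular set; the finitely many points $Q_j=(t_j,Y_j)$ are only those where the tangent to this curve is horizontal ($\alpha_Y=0$). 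Away from the $t_j$, the solution $u^\theta(t,\cdot)$ is still only \emph{piecewise} smooth in $x$, with $u_x,u_{xx}$ blowing up at the image of the singular curve, and the Section~\ref{sec_tangent} computation --- which is carried out pointwise in $(t,x)$ --- does not directly apply there. Lipschitz continuity of $t\mapsto\sum_\ell\int|J_\ell^\theta|\,dY$ gives an \emph{absolute} bound $|\tfrac{d}{dt}(\cdots)|\le L$ but not the \emph{relative} Gr\"onwall bound $\tfrac{d}{dt}(\cdots)\le C(\cdots)$ that \eqref{6.2} requires, so ``no jump'' is not the right criterion.

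What the paper actually does at the generic ($t\neq t_j$) times is show that the singular curve contributes nothing to the time derivative: because $\alpha^\theta_Y\neq 0$ there, the intersection of the strip $\tau-\varepsilon\le t\le\tau+\varepsilon$ with $\{\alpha^\theta=\pi\}$ has $Y$-width $O(\varepsilon)$, so
\[
\lim_{\varepsilon\to 0}\frac{1}{\varepsilon}\Big(\int_{\Lambda_\varepsilon^+}-\int_{\Lambda_\varepsilon^-}\Big)\sum_{\ell=1}^4 |J_\ell^\theta(t,Y)|\,dY=0,
\]
and the time derivative of $\sum_\ell\int|J_\ell^\theta|\,dY$ is therefore controlled by the estimate on the complement of that strip, where Lemma~\ref{lem_est} legitimately applies. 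The finitely many exceptional times $t_j$ (where $\alpha_Y=0$ and hence $|Y_\varepsilon-Y'_\varepsilon|=O(1)$) are then handled by the mere continuity argument you invoke. Your write-up collapses these two distinct cases into one, applies the weaker ``continuity'' reasoning to both, and omits the transversality/strip argument that is the actual mechanism making the Gr\"onwall inequality survive the generic singularities. The remaining ingredients in your proposal (reduction to \eqref{est on w and tilde v}, the $\theta$-integration, the appeal to Theorem~\ref{thm_para} and Corollary~\ref{Corollary 4.1} for admissibility of the perturbed paths) are in line with the paper.
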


\begin{proof}
By the definition of piecewise regular paths,
we know that $u^\theta$ has generic regularities for every $\theta\in[0,1]\backslash\{\theta_1,\cdots,\theta_N\}$. So the solution $u^\theta$ is smooth in the $t$-$Y$ coordinates and piecewise smooth in the $t$-$x$ coordinates, thus the existence of the tangent vector is obvious. 

\paragraph{{\bf 1.}} We claim that, for $\theta\in[0,1]\backslash\{\theta_1,\cdots,\theta_N\}$,  if we have 
\begin{equation}\label{6.2}
\|v^\theta(t)\|_{u^\theta(t)}\leq e^{C_1t}\|v^\theta(0)\|_{u^\theta(0)},
\end{equation}
then \eqref{6.1} holds.
Here the constant $C_1$ depends only on the upper bound of the total energies.

Indeed, according to the definition of the length of a piecewise regular path, fix $\epsilon>0$, there exists some $Y$, such that, at time $t=0$,
\begin{equation*}
\int_0^1\sum_{\ell=1}^4\int_{\mathbb{R}} |J_\ell^\theta(0,Y)|\,dY\,d\theta\leq \|\gamma^0\|+\epsilon.
\end{equation*} 
Thus, integrating \eqref{6.2} over the interval $\theta\in[0,1]$, it holds that
\begin{equation*}
\|\gamma^t\|\leq C_2(\|\gamma^0\|+\epsilon),
\end{equation*}
which implies \eqref{6.1}, since $\epsilon>0$ is arbitrary.

\paragraph{{\bf 2.}}
Therefore in the following, we will concentrate on proving \eqref{6.2}.  If $u^\theta$ is smooth in the $(t, x)$ variables, in light of \eqref{2.19}, \eqref{6.2} follows directly. Thus, it suffices to show that the same result can be applied if $u^\theta$ is piecewise smooth with generic singularities. 

We first note that there exist at most finitely many points $Q_j=(t_j,Y_j), j=1,\cdots,N$ on which $\alpha^\theta=\pi$, $\alpha^\theta_Y=0$, when $t\in[0,T]$ . In fact,  using the implicit function theorem and the generic condition (\ref{generic_con}), we know that each point $Q$ with $\alpha^\theta=\pi$ and $\alpha^\theta_Y=0$ is isolated from other $Q's$ in a small neighborhood. Then it follows that there exist at most finite many points $Q_j$ by applying a finite covering argument on the bounded region including all possible singularities (the existence of such a bounded region is shown in Part 1 of the proof of Theorem \ref{thm_generic} in Section \ref{sub4.2}.).

Now, for each time $t=t_j$ corresponding to the point $Q_j$, the map
\begin{equation*}
t\mapsto\int_0^1\sum_{\ell=1}^4\int_{\mathbb{R}} |J_\ell^\theta(t,Y)|\,dY\,d\theta
\end{equation*}
is continuous,
Since the number of $t_j$ is finite,
the metric will not be affected. 

At time $t\neq t_j$, on the other hand, we claim that the generic singularity does not affect the estimate \eqref{2.19}, that is,  the effect of the generic singularity to the time derivative
\begin{equation*}
\frac{d}{dt}\sum_{\ell=1}^4\int_{\mathbb{R}} |J_\ell^\theta(t,Y)|\,dY\end{equation*} 
is negligible. 

We know that
\beq\label{alpialyn0}
\alpha^\theta=\pi,\qquad\alpha^\theta_Y\neq0
\eeq
at the singularities.
For a fixed time $\tau$, let the point $(t_\varepsilon,Y_\varepsilon)$ be the intersection of $\Gamma_{\tau-\varepsilon}=\{(t,Y);~ t=\tau-\varepsilon\}$ and $\{(t,Y); ~\alpha^\theta (t,Y)=\pi\}$, and the point $(t'_\varepsilon,Y'_\varepsilon)$ be the intersection of $\Gamma_{\tau+\varepsilon}=\{(t,Y);~ t=\tau+\varepsilon\}$ and $\{(t,Y); ~\alpha^\theta (t,Y)=\pi\}$. Then denote 
\begin{equation*}
\begin{cases}
&\Lambda_\varepsilon^+:=\Gamma_{\tau+\varepsilon}\cap\{(t,Y);Y\in [Y'_\varepsilon,Y_\varepsilon]\},\\
&\Lambda_\varepsilon^-:=\Gamma_{\tau-\varepsilon}\cap\{(t,Y);Y\in [Y'_\varepsilon,Y_\varepsilon]\}.
\end{cases}
\end{equation*}
Thus, we have
\begin{equation*}
\lim_{\varepsilon\to 0}\frac{1}{\varepsilon}\Big(\int_{\Lambda_\varepsilon^+}-\int_{\Lambda_\varepsilon^-}\Big)\sum_{\ell=1}^4 |J_\ell^\theta(t,Y)|\,dY=0,
\end{equation*}
since each integrand is continuous and $|Y_\varepsilon -Y'_\varepsilon|=O(\varepsilon)$ because of (\ref{alpialyn0}). This means that the curve  $\{(t,Y); \alpha^\theta (t,Y)=\pi\}$ has non-horizontal tangent line at the singularity. This implies that the estimate \eqref{2.19} remains valid in the presence of singular curve where $\alpha^\theta=\pi$. Hence we complete the proof of Theorem \ref{thm_length}.
\end{proof}

\begin{Remark}
We want to point out that the assumption that the solution path in Corollary \ref{Corollary 4.1} is regular, i.e. 
the solution has only generic singularities, except at finitely many $\theta$ values, is very crucial. Because of this property we show that there are only finitely many points $Q_j$ with $\alpha=\pi$, $\alpha_Y=0$. However at time $t = t_j$ we cannot treat these $Q_j$'s directly by the method used for $t \neq t_j$ since now $|Y_\varepsilon -Y'_\varepsilon|=O(1)$.  
\end{Remark}

\subsection{Construction of the geodesic distance}\label{sec_geodesic}

In light of Theorem \ref{thm_generic}, there exists an open dense set $\mathcal{D}\subset \Big(\mathcal{C}^3(\mathbb{R})\cap H^1(\mathbb{R})\cap W^{1,4}(\mathbb{R})\Big)$, such that, for $u_0\in\mathcal{D}$, the solution of \eqref{2.1} has only generic singularities. Now, on $\mathcal{D}^\infty:=\mathcal{C}_0^\infty\cap\mathcal{D}$, we construct a geodesic distance, defined as the infimum among the weighted lengths of all piecewise regular paths connecting two given points.

Consider two solutions $u,\tilde{u}\in \mathcal{D}^\infty$. Denote their total energies as
\begin{equation*}
\mathcal{E}(u):=\int_\R (u^2+u_{x}^2)(x)\,dx,\quad \mathcal{E}(\tilde{u}):=\int_\R (\tilde{u}^2+\tilde{u}_{x}^2)(x)\,dx,
\end{equation*}
respectively. For $E_1>0$ and $E_2>0$ denote the set
\begin{equation*}
\Sigma:=\{u\in H^1(\mathbb{R})\cap W^{1,4}(\mathbb{R}); ~\mathcal{E}(u)\leq E_1, \| u_{x}\|_{L^4}^4\leq E_2\}.
\end{equation*}

\begin{Definition}\label{Definition 7.1}
For solutions with initial data in $\mathcal{D}^\infty\cap \Sigma$, we define the geodesic distance $d(u,\tilde{u})$ as the infimum among the weighted lengths of all piecewise regular paths, which connect $u$ with $\tilde{u}$, that is, for any time $t$,
\begin{equation*}
\begin{split}
d(u,\tilde{u}):=\inf \LCB \|\gamma^t\|:\  \right. &\gamma^t \text{ is a piecewise regular path}, \gamma^t(0)=u,\gamma^t(1)=\tilde{u},\\
&\left. \mathcal{E}(u^\theta)\leq E_1, \| u_{x}^\theta\|_{L^4}^4\leq E_2, \text{ for all } \theta\in[0,1]\RCB.
\end{split}\end{equation*} 
\end{Definition}

Finally we can define the metric for the general weak solutions.

\begin{Definition}\label{Definition 7.2}
Let $u_0$ and $\tilde{u}_0$ in $H^1(\R)\cap W^{1,4}(\R)$ be two absolute continuous initial data as required in the existence and uniqueness Theorem \ref{thm_exist}. Denote $u$ and $\tilde{u}$ to be the corresponding global weak solutions, then we define, for any time $t$,
\begin{equation*}
d(u,\tilde{u}):=\lim_{n\rightarrow \infty}d(u^n,\tilde{u}^n),
\end{equation*} 
for any two sequences of solutions $u^n$ and ${\tilde u}^n$ in $\mathcal{D}^\infty\cap \Sigma$ with
\[
\|u^n-u\|_{H^1\cap W^{1,4}}\rightarrow 0,\quad\hbox{and}\quad
\|\tilde{u}^n-\tilde{u}\|_{H^1\cap W^{1,4}}\rightarrow 0.
\]
\end{Definition}

The limit in the definition is independent of the choice of sequences, because the solution flows are Lipschitz  in $\mathcal{D}^\infty\cap \Sigma$. Since the concatenation of two piecewise regular paths is still a piecewise regular path (after a suitable re-parameterization), it is clear that $d(\cdot,\cdot)$ is a distance. This way the metric is well-defined.

Note that when 
\[\|u^n_0-u_0\|_{H^1\cap W^{1,4}}\rightarrow 0,\] 
it is easy to show that the corresponding solutions satisfy, for any $t>0$,
\[\|u^n-u\|_{H^1\cap W^{1,4}}\rightarrow 0\]
by the semi-linear equations (\ref{4.1}). So it is clear that the Lipschitz property in Theorem \ref{thm_length} can be extended to the general solutions, and so we conclude to obtain Theorem \ref{thm_Lip geo}. 

\section{Comparison with other metrics}\label{sec_comparison}

The purpose of this section is to compare the distance $d(\cdot,\cdot)$ with other types of metrics. 
\begin{Proposition}[Comparison with the Sobolev metric]\label{Proposition 7.1}
For any solutions $u, \tilde{u}\in  \Sigma $ 
to (\ref{2.1}), there exists some constant $C$ depends only on $E_1$ and $E_2$, such that, 
\begin{equation*}
d(u,\tilde{u})\leq C\Big(\|u-\tilde{u}\|_{H^1}+\|(u-\tilde{u})e^{-|x|}\|_{L^{1}}+\|(u_x-\tilde{u}_x)e^{-|x|}\|_{L^1}+\|u_{x}-\tilde{u}_{x}\|_{L^4}\Big).
\end{equation*}
\end{Proposition}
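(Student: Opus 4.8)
The plan is to exploit that $d$ is an infimum of lengths over piecewise regular paths, so the inequality will follow as soon as one produces \emph{one} convenient competitor path from $u$ to $\tilde u$ and bounds its length from above. The obvious choice is the straight–line homotopy $u^\theta:=(1-\theta)u+\theta\tilde u$, $\theta\in[0,1]$, whose tangent field is the \emph{constant} vector $v^\theta=\tfrac{d}{d\theta}u^\theta=\tilde u-u$. First I would reduce to $u,\tilde u\in\mathcal{D}^\infty\cap\Sigma$: by Definition \ref{Definition 7.2}, $d(u,\tilde u)=\lim_n d(u^n,\tilde u^n)$ for sequences in $\mathcal{D}^\infty\cap\Sigma$ converging to $u,\tilde u$ in $H^1\cap W^{1,4}$, and each of the four quantities on the right–hand side is continuous under $H^1\cap W^{1,4}$–convergence (for the two weighted $L^1$ norms because $\|fe^{-|x|}\|_{L^1}\le\|e^{-|x|}\|_{L^2}\|f\|_{L^2}$), so it suffices to prove the estimate on $\mathcal{D}^\infty\cap\Sigma$.

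For $u,\tilde u\in\mathcal{D}^\infty\cap\Sigma$ one first checks that the homotopy stays in the admissible class: $u^\theta\in\mathcal{C}_0^\infty$, and since $t\mapsto t^2,t^4$ are convex, $\mathcal{E}(u^\theta)\le E_1$ and $\|u^\theta_x\|_{L^4}^4\le E_2$ for every $\theta$. The path of initial data $\theta\mapsto u^\theta$ is smooth, hence by Corollary \ref{Corollary 4.1} (via Theorem \ref{thm_para}) it is a $\mathcal{C}^1$–limit of piecewise regular paths, with the endpoints kept at $u,\tilde u$ (possible since $u,\tilde u\in\mathcal{D}$, so the generic condition already holds near the ends of the homotopy and only the interior needs perturbing). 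As the weighted length functional is continuous along such approximations, this yields
\[
d(u,\tilde u)\ \le\ \int_0^1\|v^\theta\|_{u^\theta}\,d\theta,\qquad v^\theta=\tilde u-u .
\]

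It then remains to bound $\|v^\theta\|_{u^\theta}$ uniformly in $\theta$, which I would do from the explicit Finsler norm \eqref{2.17} using the admissible horizontal shift $w\equiv0$ (legitimate: the norm is evaluated at $t=0$, where the member of $\mathcal{A}$ with datum $w_0\equiv0$ vanishes). With $w=0$ one has $I_1=0$, $\hat v=\tilde u-u$, so
\[
\|v^\theta\|_{u^\theta}\ \le\ \int_\mathbb{R}\Big(|\tilde u-u|\,(1+(u^\theta_x)^2)^2+|\tilde u_x-u_x|\,(1+(u^\theta_x)^2)+4\,|u^\theta_x+(u^\theta_x)^3|\,|\tilde u_x-u_x|\Big)e^{-|x|}\,dx,
\]
and using convexity of $(1+t^2)^2$, $t^2$, $|t|^3$ to bound $u^\theta_x$ by $u_x,\tilde u_x$, the bounds $\|u\|_{L^\infty}^2\le\|u\|_{H^1}^2\le E_1$, $\|u_x\|_{L^2}^2\le E_1$, $\|u_x\|_{L^4}^4\le E_2$ (and likewise for $\tilde u$), Hölder's inequality, and $\int e^{-|x|}(u^\theta_x)^2\,dx\le\|u^\theta_x\|_{L^2}^2$, $\int e^{-|x|}(u^\theta_x)^4\,dx\le\|u^\theta_x\|_{L^4}^4$, each term is controlled: the constant parts of the weights give $\|(u-\tilde u)e^{-|x|}\|_{L^1}$ and $\|(u_x-\tilde u_x)e^{-|x|}\|_{L^1}$; the parts carrying $(u^\theta_x)^2$ or $(u^\theta_x)^4$ against $|\tilde u-u|$ are $\le\|\tilde u-u\|_{L^\infty}(\|u^\theta_x\|_{L^2}^2+\|u^\theta_x\|_{L^4}^4)\le C(E_1,E_2)\|u-\tilde u\|_{H^1}$; and the parts with $(u^\theta_x)^2$, $u^\theta_x$ or $(u^\theta_x)^3$ against $|\tilde u_x-u_x|$ are $\le C(E_1,E_2)\|u_x-\tilde u_x\|_{L^4}$ by Hölder. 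Integrating in $\theta$ gives the Proposition.

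The calculations in the last paragraph are routine Hölder–Sobolev bookkeeping and should present no difficulty. The delicate point, and the one I would expect to spend the most effort justifying, is the displayed inequality $d(u,\tilde u)\le\int_0^1\|v^\theta\|_{u^\theta}\,d\theta$: the straight–line homotopy need not satisfy the generic–singularity condition \eqref{generic_con}, so it is not literally a competitor in the infimum defining $d$, and one must invoke the density of piecewise regular paths (Theorem \ref{thm_para}, Corollary \ref{Corollary 4.1}) together with continuity of the length functional in order to pass to it; the tiny perturbations of the endpoints and of the energy bounds $E_1,E_2$ that such approximations produce are harmless, being absorbed into the constant $C$ and into the limit defining $d$ in Definition \ref{Definition 7.2}.
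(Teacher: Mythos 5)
Your proposal follows essentially the same route as the paper: reduce to $\mathcal{D}^\infty\cap\Sigma$, use the straight-line homotopy $u^\theta=(1-\theta)u+\theta\tilde u$ (whose energies stay within $E_1,E_2$ by convexity), take the trivial horizontal shift $w\equiv0$ in the Finsler norm \eqref{2.17}, and close with H\"older/Sobolev bookkeeping. The only difference is that you explicitly flag and sketch the density/continuity argument (via Theorem \ref{thm_para} and Corollary \ref{Corollary 4.1}) needed to justify that the straight-line path is an admissible competitor in the infimum; the paper tacitly subsumes this in its opening ``without loss of generality,'' so your extra care is a welcome clarification rather than a deviation.
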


\begin{proof}
Without loss of generality, we only consider the solutions in
$\mathcal{D}^\infty\cap\Sigma$.

For $\theta\in[0,1]$, consider the interpolated data $u^\theta$ as
\begin{equation}\label{7.1}
u^\theta=\theta\tilde{u}+(1-\theta)u.
\end{equation}
Then the total energy of $u^\theta$ satisfies
\begin{equation}\label{7.2}
\begin{split}
&\int_\R \Big((u^\theta)^2+(u^\theta_{x})^2\Big)(t,x)\,dx=\int_\R \Big([\theta\tilde{u}+(1-\theta)u]^2+[\theta\tilde{u}_{x}+(1-\theta)u_{x}]^2\Big)\,dx\\
\leq & \int_\R \Big([\theta^2+\theta(1-\theta)](\tilde{u}^2+\tilde{u}_{x}^2)+[(1-\theta)^2+\theta(1-\theta)](u^2+u_{x}^2)\Big)\,dx\\
\leq &\max\{\mathcal{E}(u),\mathcal{E}(\tilde{u})\}\leq E_1.
\end{split}
\end{equation}
Also, for the $L^4$ norm of $u^\theta_{x}$, it holds that
\begin{equation}\label{7.3}
\begin{split}
\int_\R (u^\theta_{x})^4(t,x)\,dx 
\leq & \int_\R \big(\theta\tilde{u}_{x}^4+(1-\theta)u_{x}^4\big)\,dx
\leq \max\left\{ \int_\R u_{x}^4\ dx, \int_\R \tilde{u}_{x}^4\ dx \right\}\leq E_2.
\end{split}
\end{equation}

Now, we will estimate the weighted length of the path $\gamma^t:\theta\mapsto u^\theta$ in \eqref{7.1}. The goal is to show that 
\begin{equation*}
\|\gamma^t\|\leq C\Big(\|u-\tilde{u}\|_{H^1}+\|(u-\tilde{u})e^{-|x|}\|_{L^{1}}+\|(u_x-\tilde{u}_x)e^{-|x|}\|_{L^1}+\|u_{x}-\tilde{u}_{x}\|_{L^4}\Big)
\end{equation*}
for some constant $C$ depends only on $E_1$ and $E_2$. First, from \eqref{7.1}, we obtain
\begin{equation}\label{7.4}
v^\theta=\frac{d u^\theta}{d\theta}=\tilde{u}-u, \quad u_{x}^\theta=\theta\tilde{u}_{x}+(1-\theta)u_{x}.
\end{equation}
To derive an upper bound for the weighted length $\|\gamma^t\|$, we can choose the shift $w=0$ in \eqref{2.17}. Indeed, by \eqref{7.2}--\eqref{7.4} and the definition of the weighted length of the path $\gamma^t$, we have
\begin{equation*}
\begin{split}
\|\gamma^t\|=&\int_0^1 \|v^\theta\|_{u^\theta}\,d\theta\\
=&\int_0^1\int_\R \Big(|v^\theta|(1+(u^\theta_{x})^2)^2+|v^\theta_{x}|(1+(u^\theta_{x})^2)+4|\big(u^\theta_{x}+(u^\theta_{x})^3\big) v^\theta_{x}|\Big)e^{-|x|}\,dx\,d\theta\\
\leq& \|(\tilde{u}-u)e^{-|x|}\|_{L^1}+ C\|\tilde{u}-u\|_{L^\infty}\int_0^1(\|u^\theta_{x}\|_{L^2}^2+\|u^\theta_{x}\|_{L^4}^4)\,d\theta+\|(\tilde{u}_{x}-u_{x})e^{-|x|}\|_{L^1}\\
&+ \|\tilde{u}_{x}-u_{x}\|_{L^2}\int_0^1\|u^\theta_{x}\|_{L^4}^2\,d\theta+ C\|\tilde{u}_{x}-u_{x}\|_{L^2}\int_0^1\|u^\theta_{x}\|_{L^2}\,d\theta\\
&+ C\|\tilde{u}_{x}-u_{x}\|_{L^4}\int_0^1\|u^\theta_{x}\|_{L^4}^3\,d\theta\\
\leq &C\Big(\|u-\tilde{u}\|_{H^1}+\|(u-\tilde{u})e^{-|x|}\|_{L^{1}}+\|(u_x-\tilde{u}_x)e^{-|x|}\|_{L^1}+\|u_{x}-\tilde{u}_{x}\|_{L^4}\Big).
\end{split}
\end{equation*}
Here, $C$ denotes a generic positive constant depends only on $E_1$ and $E_2$. This completes the proof of Proposition \ref{Proposition 7.1}.
\end{proof}

In the following two propositions, we compare the distance with the $L^1$ distance and the Kantorovich-Rubinstein or Wasserstein distance.

\begin{Proposition}[Comparison with $L^1$ metric]\label{Proposition 7.2}
For any solutions $u, \tilde{u}\in H^1(\R)\cap W^{1,4}(\R)$ to (\ref{2.1}), there exists some constant $C$ depends only on $E_1$ and $E_2$, such that, 
\begin{equation}\label{7.5}
\|(u-\tilde{u})e^{-|x|}\|_{L^1}\leq C\cdot d(u,\tilde{u}).
\end{equation}
\end{Proposition}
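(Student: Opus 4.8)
## Proof Proposal for Proposition \ref{Proposition 7.2}

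The plan is to establish the lower bound $\|(u-\tilde u)e^{-|x|}\|_{L^1}\leq C\, d(u,\tilde u)$ by showing that the functional $u\mapsto \|u\, e^{-|x|}\|_{L^1}$ (or rather the difference functional along a path) does not increase faster than the weighted length of any piecewise regular path. Since $d(u,\tilde u)$ is defined as an infimum of lengths $\|\gamma^t\|$ over piecewise regular paths $\gamma^t:\theta\mapsto u^\theta$ connecting $u$ to $\tilde u$, it suffices to show that for \emph{every} such path,
\begin{equation*}
\|(u-\tilde u)e^{-|x|}\|_{L^1}\leq C\int_0^1 \|v^\theta\|_{u^\theta}\, d\theta = C\,\|\gamma^t\|,
\end{equation*}
and then take the infimum over $\gamma^t$. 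By density (Definition \ref{Definition 7.2}) it is enough to argue for $u,\tilde u\in\mathcal D^\infty\cap\Sigma$ and piecewise regular paths, then pass to the limit.

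First I would write, for a fixed piecewise regular path, the elementary estimate coming from the fundamental theorem of calculus in $\theta$:
\begin{equation*}
\|(u-\tilde u)e^{-|x|}\|_{L^1} = \left\| \int_0^1 \frac{d u^\theta}{d\theta}\, d\theta\; e^{-|x|}\right\|_{L^1} \leq \int_0^1 \int_{\mathbb R} |v^\theta(x)|\, e^{-|x|}\, dx\, d\theta,
\end{equation*}
where $v^\theta = d u^\theta/d\theta$ is the (vertical) tangent flow. Here one must be slightly careful when the path is only piecewise smooth in $\theta$: one breaks $[0,1]$ at the finitely many jump values $\theta_1,\dots,\theta_N$ and uses continuity of $u^\theta$ in $H^1\cap W^{1,4}$ (hence in $L^1_{e^{-|x|}}$) across those values, so the telescoping still gives the bound. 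The next — and main — step is to control $\int_{\mathbb R}|v^\theta|e^{-|x|}\,dx$ by the Finsler norm $\|v^\theta\|_{u^\theta}$. Recall from \eqref{2.17} that
\begin{equation*}
\|v\|_u = \inf_{w\in\mathcal A}\int_{\mathbb R}\LCB |w|(1+u_x^2)^2 + |v+u_x w|(1+u_x^2)^2 + |v_x+u_{xx}w|(1+u_x^2) + |4(u_x+u_x^3)(v_x+u_{xx}w)+(1+u_x^2)^2 w_x| \RCB e^{-|x|}\,dx.
\end{equation*}
The key observation is that for any admissible $w$,
\begin{equation*}
|v|\,e^{-|x|} \leq |v+u_x w|\,e^{-|x|} + |u_x w|\,e^{-|x|} \leq |v+u_x w|(1+u_x^2)^2 e^{-|x|} + \tfrac12\big(|w|(1+u_x^2)^2 + (\text{something controlled})\big)e^{-|x|},
\end{equation*}
so that $|v|e^{-|x|}$ is pointwise dominated by a fixed multiple of the sum of the four integrands $I_1+I_2+I_3+I_4$; more precisely $|u_x w|\le \tfrac12(1+u_x^2)|w|\le \tfrac12(1+u_x^2)^2|w|$, giving $|v|e^{-|x|}\le |v+u_xw|(1+u_x^2)^2 e^{-|x|} + \tfrac12|w|(1+u_x^2)^2 e^{-|x|}$, hence $\int_{\mathbb R}|v|e^{-|x|}\,dx \le 2\,(I_1+I_2)$ for every admissible $w$. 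Taking the infimum over $w\in\mathcal A$ yields $\int_{\mathbb R}|v|e^{-|x|}\,dx\le 2\,\|v\|_u$. Since this constant $2$ is independent of $u\in\Sigma$, integrating over $\theta$ gives $\|(u-\tilde u)e^{-|x|}\|_{L^1}\le 2\|\gamma^t\|$, and taking the infimum over piecewise regular paths gives \eqref{7.5} with $C=2$.

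The main obstacle I expect is not the pointwise algebraic inequality but the justification of the telescoping identity at the level of regularity available: one needs the path $\theta\mapsto u^\theta$ to be genuinely differentiable in $\theta$ with $v^\theta$ the pointwise $\theta$-derivative, \emph{and} the exchange of $\int_0^1$ with the $L^1_{e^{-|x|}}$ norm — this is where piecewise regularity of the path (Definition \ref{Definition 4.2}) and the smoothness of the tangent flow established in Section \ref{sec_transformed tangent} (the integrands $J_\ell$ are smooth) are used. One also has to check that the constant in $|u_x w|\le \tfrac12(1+u_x^2)^2|w|$ does not interact badly with the weight $e^{-|x|}$, but since $e^{-|x|}$ is a fixed bounded factor multiplying every term uniformly, this causes no difficulty. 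Finally, for general weak solutions $u,\tilde u\in H^1\cap W^{1,4}$, one approximates by $u^n,\tilde u^n\in\mathcal D^\infty\cap\Sigma$ as in Definition \ref{Definition 7.2}, applies the estimate to each pair, and passes to the limit using $\|u^n-u\|_{H^1\cap W^{1,4}}\to 0\Rightarrow \|(u^n-u)e^{-|x|}\|_{L^1}\to 0$ together with $d(u^n,\tilde u^n)\to d(u,\tilde u)$.
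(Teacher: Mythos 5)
Your proposal is correct and takes essentially the same route as the paper: both hinge on the pointwise inequality $|v|\le |v+u_xw|+|u_xw|\le |v+u_xw|(1+u_x^2)^2+|w|(1+u_x^2)^2$, which bounds $|v^\theta|e^{-|x|}$ by the integrands of $I_1+I_2$ for any admissible $w$, followed by the fundamental theorem of calculus in $\theta$ and taking the infimum over $w$ and over piecewise regular paths. The extra care you take with the piecewise-in-$\theta$ regularity and the density passage from $\mathcal D^\infty\cap\Sigma$ to general data is implicit in the paper's terse argument but is the same justification.
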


\begin{proof}   Assume that $\gamma^t:\theta\mapsto u^\theta$ is a regular path connecting $u$ with $\tilde{u}$.

{\bf 1.} Notice that
\begin{equation*}
|v|=|v+u_x w-u_x w|\leq |v+u_x w|+|u_x w|\leq |v+u_x w|(1+u_x^2)^2+|w|(1+u_x^2)^2.
\end{equation*}
Thus, by the above inequality, the definition \ref{Definition 7.1}, \eqref{2.17} and \eqref{7.4}, for some constants $C_3, C_4>0$, we have
\begin{equation*}
d(u,\tilde{u})\geq C_3\inf_{\gamma^t}\int_0^1\int_\R |v^\theta|e^{-|x|}\,dx\,d\theta=C_3\inf_{\gamma^t}\int_0^1\int_\R |\frac{d u^\theta}{d\theta}|e^{-|x|}\,dx\,d\theta\geq C_4\|(u-\tilde{u})e^{-|x|}\|_{L^1},
\end{equation*}
which implies \eqref{7.5}.
\end{proof}

\begin{Proposition}[Comparison with the Kantorovich-Rubinstein metric]\label{Proposition 7.3}
Consider the same assumptions as in Proposition  \ref{Proposition 7.2}, and further assume that $u, \tilde{u}\in L^1(\R)$.
Then one could drop the $e^{-|x|}$ term in \eqref{2.17} to define a new distance $d^*$, such that 
\begin{equation}\label{7.52}
\|(u-\tilde{u})\|_{L^1}\leq C\cdot d^*(u,\tilde{u}), \quad \text{ and }
\end{equation}
\begin{equation}\label{7.6}
\sup_{\|f\|_{\mathcal{C}^1}\leq 1}\left|\int f \,d\mu-\int f\,d\tilde{\mu}\right|\leq d^*(u,\tilde{u}),
\end{equation}
where $\mu,\tilde{\mu}$ are the measures with densities $(1+(u_{x})^2)^2$ and $(1+(\tilde{u}_{x})^2)^2$ w.r.t the Lebesgue measure, respectively.
\end{Proposition}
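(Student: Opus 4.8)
The plan is to prove the two inequalities \eqref{7.52} and \eqref{7.6} by carefully re-running the length estimates of Section~\ref{sec_length} and Section~\ref{sec_comparison}, but this time \emph{without} the exponential weight $e^{-|x|}$, which is legitimate precisely because we now assume $u,\tilde u\in L^1(\R)$. First I would observe that, since $u,\tilde u\in L^1(\R)\cap H^1(\R)\cap W^{1,4}(\R)$, along the linear interpolation path $u^\theta=\theta\tilde u+(1-\theta)u$ all the quantities appearing in the unweighted version of \eqref{2.17} are integrable: indeed $u^\theta\in L^1$ with $\|u^\theta\|_{L^1}\le\max\{\|u\|_{L^1},\|\tilde u\|_{L^1}\}$, while $u^\theta_x$, $(u^\theta_x)^2$, $(u^\theta_x)^4$ are all controlled by $E_1,E_2$ as in \eqref{7.2}--\eqref{7.3}. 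Moreover the growth estimate of Lemma~\ref{lem_est} was shown (see the parenthetical remark just before step~1 of its proof, ``this can be thought of as assuming that the solution has compact support'') to hold verbatim in the unweighted setting; hence the whole machinery of Theorem~\ref{thm_length} and Definition~\ref{Definition 7.1} goes through with $e^{-|x|}$ removed, producing a well-defined distance $d^*$ for which the solution flow remains Lipschitz. This is the step I expect to require the most care: one must check that every place where $\int_\R e^{-|x-y|}(1+u_x^2)^2\,dx\le C$ was used in the proof of Lemma~\ref{lem_est} can be replaced by the corresponding bound without the kernel — here the nonlocal terms $P_1,\partial_xP_2$ and their perturbations must be estimated using $\|u\|_{L^1}$ and $\|u_x\|_{L^3}\le K$ via Young's inequality rather than the pointwise bounds \eqref{2.12}, which is the genuine obstacle since without $e^{-|x|}$ one loses the trick of absorbing $u^2$ into the weight.

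For \eqref{7.52}, I would argue exactly as in Proposition~\ref{Proposition 7.2}: from the elementary inequality $|v|\le|v+u_xw|(1+u_x^2)^2+|w|(1+u_x^2)^2$ and the unweighted form of \eqref{2.17}, the unweighted length of any regular path $\gamma^t$ bounds $C\int_0^1\int_\R|v^\theta|\,dx\,d\theta$ from above, so that for the linear path (or for the infimum over all paths) one gets $d^*(u,\tilde u)\ge C\inf_{\gamma^t}\int_0^1\int_\R|\,d u^\theta/d\theta\,|\,dx\,d\theta\ge C\|u-\tilde u\|_{L^1}$, the last step being the triangle inequality $\int_0^1|\partial_\theta u^\theta|\,d\theta\ge|u^1-u^0|$ pointwise in $x$. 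This is routine once $d^*$ is known to be well-defined.

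For \eqref{7.6}, the key point is the identity $(1+u_x^2)^2\,dx=\xi\,dY$ from Section~\ref{sec_transformed tangent}, together with the transport interpretation of the metric: along an infinitesimal deformation the base measure $\mu^\theta$ with density $(1+(u^\theta_x)^2)^2$ is pushed by the flow $x\mapsto x+\epsilon w$, and the $J_4$-term of \eqref{3.12} (equivalently $I_4$ of \eqref{2.17}) measures exactly the change in mass while $J_1$ (i.e. $I_1$) measures the horizontal displacement $w$. Concretely, for a test function $f$ with $\|f\|_{\mathcal C^1}\le1$ one writes
\[
\frac{d}{d\theta}\int_\R f\,d\mu^\theta=\int_\R f_x\,w^\theta\,(1+(u^\theta_x)^2)^2\,dx+\int_\R f\,\big(4(u^\theta_x+(u^\theta_x)^3)(v^\theta_x+u^\theta_{xx}w^\theta)+(1+(u^\theta_x)^2)^2w^\theta_x\big)\,dx,
\]
and since $|f_x|\le1$, $|f|\le1$, the right-hand side is bounded in absolute value by $\int_\R|w^\theta|(1+(u^\theta_x)^2)^2\,dx+\int_\R|4(u^\theta_x+(u^\theta_x)^3)(v^\theta_x+u^\theta_{xx}w^\theta)+(1+(u^\theta_x)^2)^2w^\theta_x|\,dx=I_1+I_4\le\|v^\theta\|_{u^\theta}^*$. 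Integrating over $\theta\in[0,1]$ and taking the infimum over all piecewise regular paths connecting $u$ to $\tilde u$ yields $|\int f\,d\mu-\int f\,d\tilde\mu|\le d^*(u,\tilde u)$, and then the supremum over $\|f\|_{\mathcal C^1}\le1$ gives \eqref{7.6}. The only subtlety here is justifying the differentiation under the integral sign and the boundary/density argument to pass from smooth paths in $\mathcal D^\infty\cap\Sigma$ to general weak solutions, which follows the same limiting scheme as in Definition~\ref{Definition 7.2} together with the continuity of $\mu_t$ in the weak topology from Theorem~\ref{thm_exist}(iii).
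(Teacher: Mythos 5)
Your argument is correct and takes essentially the same route as the paper: \eqref{7.52} is obtained by re-running the lower-bound computation of Proposition \ref{Proposition 7.2} with the exponential factor removed, and \eqref{7.6} follows from the identity
\[
\frac{d}{d\theta}\int_\R f\,d\mu^\theta=\int_\R f_x\,w^\theta\,(1+(u^\theta_x)^2)^2\,dx+\int_\R f\,\Big(4(u^\theta_x+(u^\theta_x)^3)(v^\theta_x+u^\theta_{xx}w^\theta)+(1+(u^\theta_x)^2)^2w^\theta_x\Big)\,dx,
\]
whose absolute value you bound by $I_1+I_4$, which is precisely the paper's inequality \eqref{7.7}.

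One caveat about your preparatory paragraph: you conflate the metric weight $e^{-|x|}$ (which is what is being dropped in passing to $d^*$) with the Green's-function kernel $e^{-|x-y|}$ that appears in $P_1,\partial_xP_2$ and in the estimate $\int_\R e^{-|x-y|}(1+u_x^2)^2\,dx\le C$ inside the proof of Lemma \ref{lem_est}. That convolution kernel is intrinsic to the equation and is never removed; the bounds \eqref{2.12} on $P_j,\partial_x P_j$ rely only on $u\in H^1\cap W^{1,4}$, not on $u\in L^1$, so the ``genuine obstacle'' you flag there does not actually arise. The role of the $L^1$ assumption is simply to guarantee that the unweighted integrals $I_1,I_2$ in \eqref{2.17} are finite (since $(1+u_x^2)^2\to 1$ at spatial infinity); Lemma \ref{lem_est} itself is already proved in the unweighted form, as the paper's parenthetical remark before step 1 notes, so no re-derivation of the propagation estimate is needed.
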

\begin{proof}

First, repeating the same proof as in Proposition \ref{Proposition 7.2} without the term $e^{-|x|}$, one can very easily obtain \eqref{7.52}.

Then, for any function $f$, such that $\|f\|_{\mathcal{C}^1}\leq 1$, denote  $\mu^\theta$ to be the measure with density $(1+(u_{x}^\theta)^2)^2$ w.r.t the Lebesgue measure, then the following holds
\begin{equation}\label{7.7}
\begin{split}
\left|\int_0^1\frac{d}{d\theta}\int f\,d\mu^\theta\,d\theta \right| \leq& \int_0^1\int_\R |f'|\cdot |w^\theta|(1+(u_{x}^\theta)^2)^2\,dx\,d\theta\\
&+\int_0^1\int_\R |f|\cdot|4(u_{x}^\theta+(u_{x}^\theta)^3)(v_{x}^\theta+u_{xx}^\theta w^\theta)+(1+u_{x}^\theta)^2)^2 w^\theta_x|\,dx\,d\theta,
\end{split}\end{equation}
where the two integrals on the right hand side of \eqref{7.7} are exactly $I_1$ and $I_4$ of \eqref{2.17}. Hence, we get \eqref{7.6} immediately. This completes the proof of Proposition \ref{Proposition 7.2}.
\end{proof}

The metric (\ref{7.6}) is usually called a Kantorovich-Rubinstein distance, which is equivalent to a Wasserstein distance by a duality theorem \cite{V}.

\section{Application to the Camassa-Holm equation} \label{appendix CH}

Our method of constructing the geodesic distance can be systematically applied to many other quasilinear equations. Here we give an example of the Camassa-Holm (CH) equation. The energy measure associated to the CH equation has density $u_x^2$. Here in the Section we will only outline the construction of the Finsler norm for the infinitesimal tangent vector for smooth solutions. The issues on the generic regularity (already given in \cite{LZ}) and the construction of the geodesic distance can be treated in a similar way as is done for Novikov equation in this paper.

The CH equation reads
  \begin{equation}\label{A.1}
\begin{cases}
      \displaystyle u_t+\LC\frac{u^2}{2}\RC_x+P_x=0,\\
       u(0,x)=u_0(x),
        \end{cases}
 \end{equation}
where 
\begin{equation*}
P:=\frac{1}{2}e^{-|x|}*\LC u^2+\frac{u_x^2}{2}\RC.
\end{equation*}

Let $u(x)$ be a smooth solution to \eqref{A.1} and consider a family of perturbed solutions
\begin{equation*}
u^\epsilon(x)=u(x)+\epsilon v(x)+o(\epsilon).
\end{equation*}

A direct calculation yields that the first order perturbation $v$ satisfies
\begin{equation}\label{A.2}
v_t+uv_x+vu_x+\frac{1}{2}\LC \int_x^{\infty}-\int^x_{-\infty}\RC e^{-|x-y|}(2uv+u_yv_y)(y)\,dy=0.
\end{equation}
\begin{equation}\label{A.3}
v_{xt}+uv_{xx}+u_xv_x+vu_{xx}-2uv+\frac{1}{2}\int_\R e^{-|x-y|}(2uv+u_yv_y)(y)\,dy=0.
\end{equation}

Similar as before, we introduce a horizontal shift $w$ satisfying
\[
x^\epsilon=x+\epsilon w+o(\epsilon)
\]
Again the shift component of the tangent vector must propagate along characteristic. Namely
\begin{equation}\label{A.5}
w_t+uw_x=v+u_x w.
\end{equation}

Now we can define a Finsler norm
\begin{equation*}
\|v\|_u=\inf_{w\in\mathcal{A}}\|(w, \hat v)\|_u, \quad \text{ with }\quad \hat v = v + u_x w, 
\]
where 
\[
\qquad\mathcal{A} = \LCB {{\hbox{solutions $w(t,x)$ of \eqref{A.5} with  smooth initial data $w_0(x)$} }}  \RCB.
\]
and
\begin{equation*}
\begin{split}
\|(w,\hat v)\|_u=&\int_\R\{[\text{change in } x]+[\text{change in } u]\}(1+u_x^2)\,dx\\
&+\int_\R [\text{change in the base measure with density } u_x^2]\,dx.
\end{split}
\end{equation*}
More precisely, we have
\begin{equation}\label{A.4}
\begin{split}
\quad\|v\|_u 
&=\inf_{w}\int_\R \{|w|(1+u_x^2)+|v+u_x w|(1+u_x^2)+|2u_x(v_x+u_{xx}w)+u_x^2w_x|\}e^{-|x|}\,dx\\
&=\inf_{w} \LC I_1+I_2+I_3 \RC.
\end{split}\end{equation}

The goal of the forthcoming computations is to validate the estimate
\begin{equation}\label{A.6}
\frac{d}{dt}\|v(t)\|_{u(t)}\leq C\|v(t)\|_{u(t)},
\end{equation}
for some constant $C$ depending only on the total energy. As same as before, in the following calculation,
we drop the $e^{-|x|}$ terms.

{\bf 1.} To estimate the time derivative of $I_1$, by \eqref{A.1} and \eqref{A.5}, we have
\begin{equation*}
\begin{split}
\big(w(1+u_x^2)\big)_t+\big(uw(1+u_x^2)\big)_x & =(w_t+uw_x)(1+u_x^2)+w[(1+u_x^2)_t+(u(1+u_x^2))_x]\\
& =(v+u_xw)(1+u_x^2)+w(u_x+2u^2u_x-2u_xP).
\end{split}
\end{equation*}
This yields the estimate
\begin{equation}\label{A.7}
\begin{split}
{dI_1\over dt} = \frac{d}{dt}\int_\R|w|(1+u_x^2)\,dx & \leq \int_\R |v+u_xw|(1+u_x^2)\,dx+C\int_\R |w|(1+u_x^2)\,dx \\
& \leq C(I_2 + I_1).
\end{split}
\end{equation}

{\bf 2.} To estimate the time derivative of $I_2$, recalling \eqref{A.1}, \eqref{A.2} and \eqref{A.5}, we obtain
\begin{equation*}
\begin{split}
&\quad\LC (v+u_xw)(1+u_x^2)\RC_t+\LC u(v+u_xw)(1+u_x^2)\RC_x\\
& = \LB v_t+uv_x+u_x(w_t+uw_x)+w(u_{xt}+uu_{xx})\RB (1+u_x^2)+(v+u_xw)\LB (1+u_x^2)_t+(u(1+u_x^2))_x \RB\\
& = \LB -vu_x-\frac{1}{2}\LC \int_x^{\infty}-\int^x_{-\infty}\RC e^{-|x-y|}(2uv+u_yv_y)\,dy+u_x(v+u_xw) \right. \\
& \quad \quad \left. +w\LC -\frac{u_x^2}{2}+u^2-P \RC \RB (1+u_x^2) +(v+u_xw)(u_x+2u^2u_x-2u_xP)\\
& = \LB -\frac{1}{2}\LC \int_x^{\infty}-\int^x_{-\infty}\RC e^{-|x-y|}(2uv+u_yv_y)\,dy+\frac{1}{2}u_x^2w+w(u^2-P)\RB (1+u_x^2)\\
& \quad \ +(v+u_xw)(u_x+2u^2u_x-2u_xP).
\end{split}
\end{equation*}
Note that $$2uv=2u(v+u_y w)-2u u_y w,$$ and
\begin{equation*}
\begin{split}
\frac{1}{2}u_x^2w=&-\frac{1}{2}\LC \int_x^{\infty}-\int^x_{-\infty}\RC \LC e^{-|x-y|}\frac12{u_y^2}w\RC_y\,dy\\
=& -\frac{1}{2}\LC \int_x^{\infty}-\int^x_{-\infty}\RC e^{-|x-y|}(u_yu_{yy}w+\frac12{u^2_y}w_y)\,dy+\frac{1}{2}\int_\R e^{-|x-y|}\frac12{u_y^2}w\,dy.
\end{split}
\end{equation*}
We thus conclude
\begin{equation}\label{A.8}
\begin{split}
{dI_2 \over dt} & = \frac{d}{dt}\int_\R |v+u_xw|(1+u_x^2)\,dx \\
& \leq C\LC \int_\R |2u_x(v_x+u_{xx}w)+u_x^2 w_x|\,dx+\int_\R |v+u_xw|(1+u_x^2)\,dx+\int_\R|w|(1+u_x^2)\,dx\RC \\
& \leq C(I_3 + I_2 + I_1).
\end{split}
\end{equation}

{\bf 3.} To estimate the time derivative of $I_3$, using \eqref{A.1}, \eqref{A.3} and \eqref{A.5} to get
\begin{equation}\label{A.9}
\begin{split}
&\quad \LC 2u_x(v_x+u_{xx}w)+u_x^2 w_x\RC_t+\LC u(2u_x(v_x+u_{xx}w)+u_x^2 w_x)\RC_x\\
& = 2(u_{xt}+uu_{xx})(v_x+u_{xx}w)+2u_x\LB v_{xt}+(uv_x)_x\RB +2u_xu_{xx}(w_t+uw_x)\\
&\quad +2u_xw\LB u_{xxt}+(uu_{xx})_x\RB +2u_xw_x(u_{xt}+uu_{xx})+u_x^2\LB w_{xt}+(uw_x)_x\RB \\
& = 2\LC -\frac{u_x^2}{2}+u^2-P\RC(v_x+u_{xx}w+u_xw_x)-2u_x\LC vu_{xx}-2uv+\frac{1}{2}\int_\R e^{-|x-y|}(2uv+u_yv_y)\,dy\RC \\
&\quad +2u_xu_{xx}(v+u_x w)+2u_xw(-u_xu_{xx}+2uu_x-P_x)+u_x^2(v_x+u_xw_x+u_{xx}w)\\
& = 2(u^2-P)(v_x+u_{xx}w+u_xw_x)+4uu_x(v+u_x w) -u_x\int_\R e^{-|x-y|}(2uv+u_yv_y)\,dy-2u_x wP_x .
\end{split}
\end{equation}
The first term in the last equality can be estimated as
\begin{equation}\label{A.10}
\begin{split}
& 2\int_\R (u^2-P)(v_x+u_{xx}w+u_xw_x)\,dx \\
& = 2\int_\R (u^2-P)(v+u_{x}w)_x\,dx =-2\int_\R (u^2-P)_x(v+u_{x}w)\,dx\\
&\leq C\int_\R |v+u_{x}w|(1+u_x^2).
\end{split}\end{equation}
For the third term, observe that 
\begin{equation}\label{A.11}2uv+u_yv_y=2u(v+u_y w)-2uu_y w+\frac{1}{2}\LB 2u_y(v_y+u_{yy}w)+u_y^2 w_y \RB-\frac{1}{2}(u^2_yw)_y.\end{equation}
We have
\begin{equation}\label{A.12}
\begin{split}
\LV \frac{1}{2}\int_\R \int_\R e^{-|x-y|}(u^2_yw)_y\,dy\,dx \RV & \leq C \int_\R \LC \int_\R|(e^{-|x-y|})_y|\,dx \RC |w|u^2_y\,dy\\
& \leq C\int_\R |w|(1+u_x^2)\,dx.
\end{split}
\end{equation}
In light of \eqref{A.9}--\eqref{A.12}, we obtain
\begin{equation}\label{A.13}
\begin{split}
{dI_3\over dt} & = \frac{d}{dt}\int_\R |2u_x(v_x+u_{xx}w)+u_x^2 w_x|\,dx \\
& \leq C \LC \int_\R |2u_x(v_x+u_{xx}w)+u_x^2 w_x|\,dx + \int_\R |v+u_xw|(1+u_x^2)\,dx + \int_\R |w|(1+u_x^2)\,dx \RC \\
& \leq C(I_3 + I_2 + I_1).
\end{split}
\end{equation}

Combining the inequalities \eqref{A.7}, \eqref{A.8} and \eqref{A.13} together, we obtain the desired inequality \eqref{A.6}.

\section{Interaction of two peakons.}\label{sec_peakon}
In this part, we use numeric method to study the interaction of two peakons, in the form of \eqref{intro_peakon},
for the Novikov equation. Especially, in this example, we show the energy concentration, which indicates the failure of $W^{1,4}(\mathbb{R})$ space in studying the Lipschitz continuous dependence, or in another word the necessity in using the transport metric.

As an example, we consider the following two-peakon initial data
\begin{equation}
\label{apbid}u(0, x) = \sum^2_{i=1} p_i(0) e^{|x - q_i(0)|}
\end{equation}
where $p_1=1$, $p_2=-0.5$, $q_1=-0.5$ and $q_2=0.5$, as shown in Figure \ref{appb_1}.

\begin{figure}[hb]
  \includegraphics[scale=0.5]{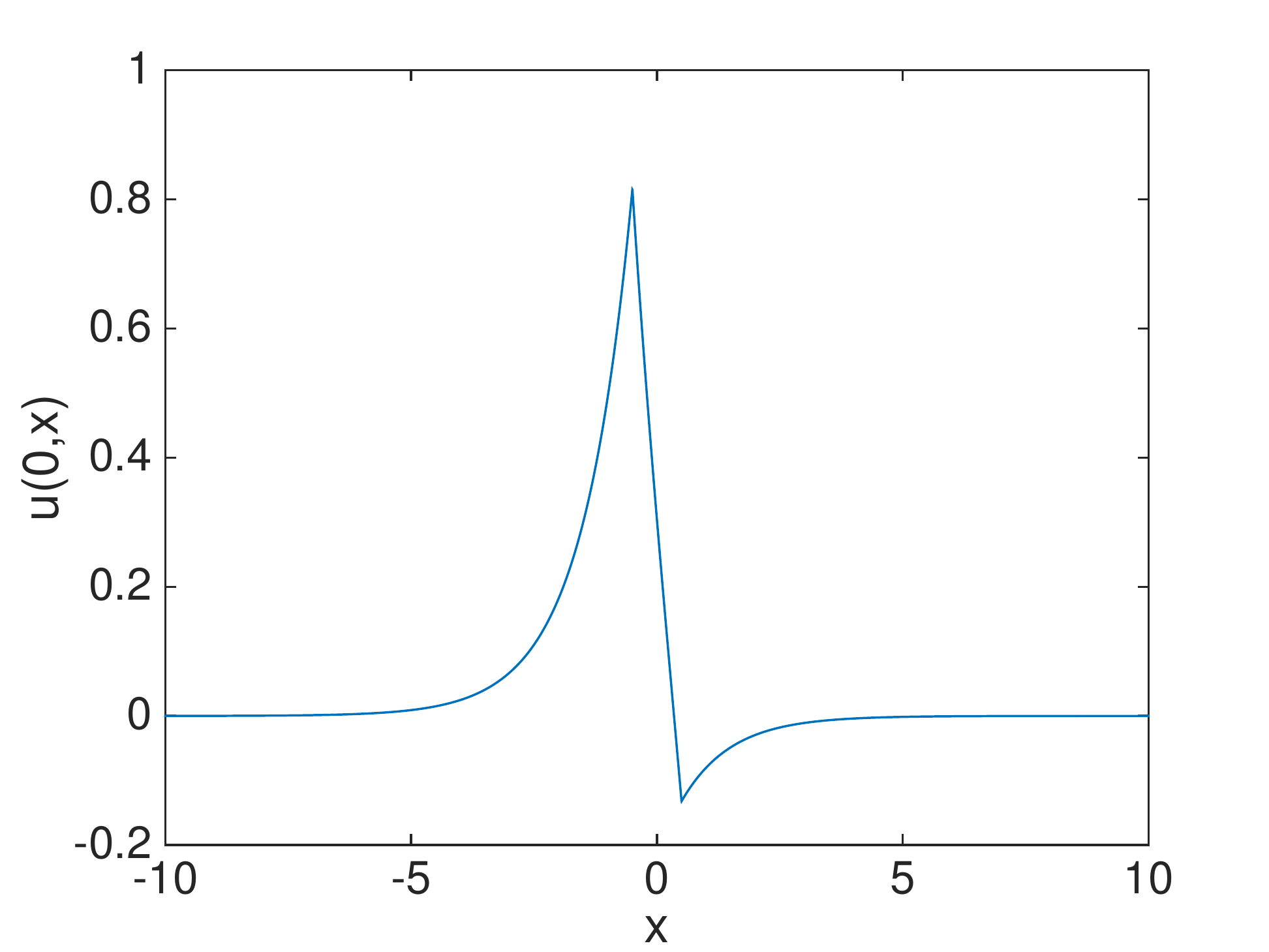}
  \caption{Two peakons: Initial data.}
  \label{appb_1}
\end{figure}

By studying the dynamical system (\ref{HT}) of $p_i$ and $q_i$ in a numeric way, we can clearly see, from Figure \ref{appb_2}, that
two peakons will interact. Similar simulation can be found in \cite{MK}. 

\begin{figure}[hb]
  \includegraphics[scale=0.5]{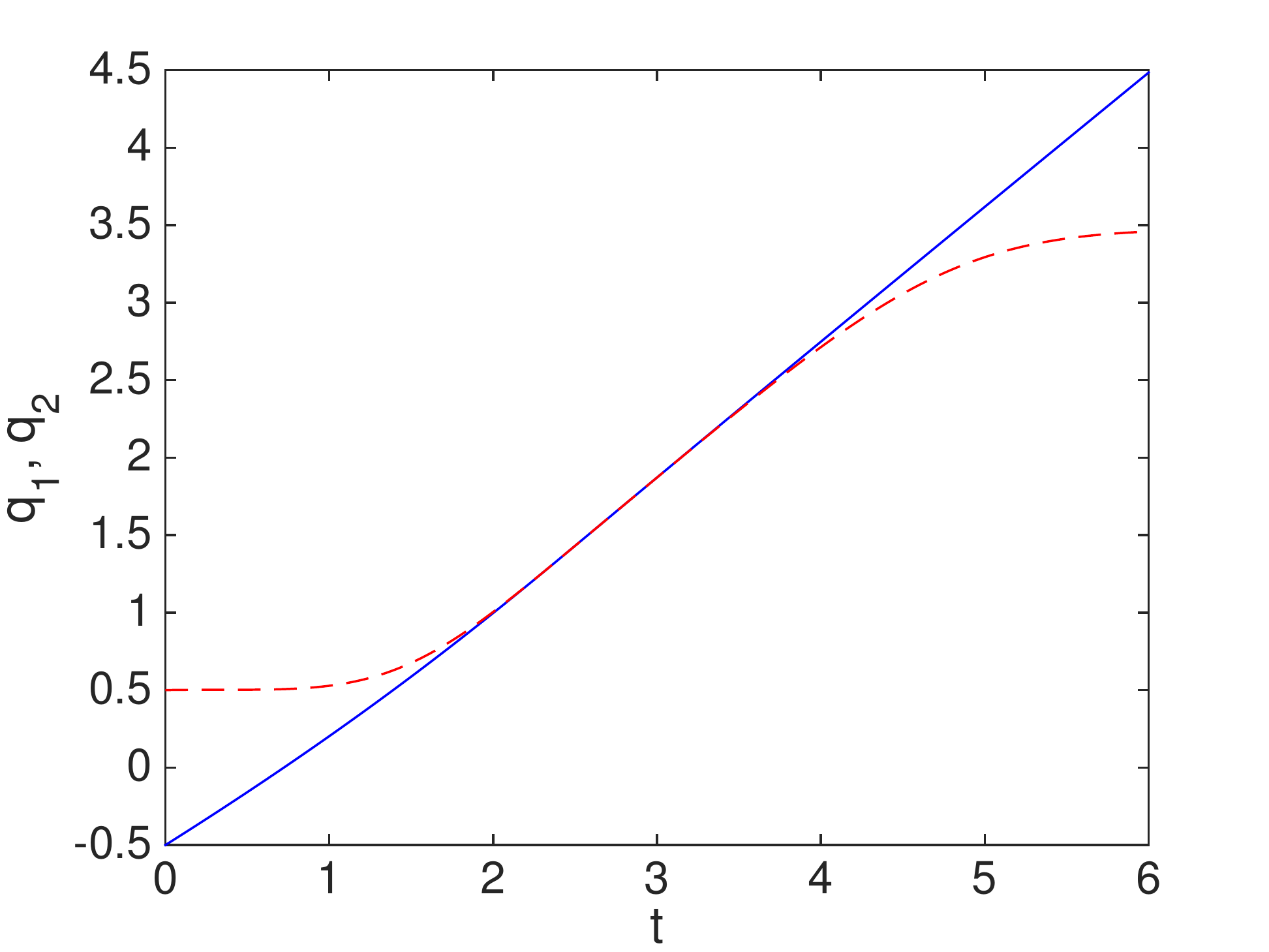}
  \caption{Interaction of two peakons on $(t,x)$-plane. The blue line is for the characteristic $q_1(t)$ and the red dash line is for the characteristic $q_2(t)$.}
  \label{appb_2}
\end{figure}


In order to 
know whether there is any energy concentration when two peakons interact,
one needs to study the energies
$\mathcal{E}$ and $\mathcal{F}$ in the interval $[q_1(t), q_2(t)]$ near and at the time when two peakons collide. 
Here $x=q_1(t)$ and $x=q_2(t)$ are two characteristics starting from the tips of two initial peakons, respectively.
However, the analysis becomes very hard at the collision if one only uses the system (\ref{HT}), because of the blowup of $p_i$.

Instead, in this paper, we use the semi-linear system established in \cite{CCL}, which dilates the interacting characteristics 
in the new $(t,Y)$-coordinates. 
In fact, integrating (\ref{4.1}), one has 
\begin{equation}\label{int4.1}
\begin{cases}
& u(t,Y)=u(0,Y)- \int_0^t \partial_x P_1+ P_2\  dt,\\
& \alpha(t,Y)=\alpha(0,Y) +\int_0^t 2u^3\cos^2\frac{\alpha}{2}-u\sin^2\frac{\alpha}{2}-2\cos^2\frac{\alpha}{2}(P_1+\partial_x P_2)\ dt,\\
& \xi(t,Y)=\xi(o,Y)+\int_0^t \xi[(2u^3+u)-2(P_1+\partial_x P_2)]\sin \alpha\ dt.
\end{cases}
\end{equation}
We use an iteration:
\begin{equation}
\begin{cases}
& u_{n+1}(t,Y)=u(0,Y)- \int_0^t \partial_x P_1(u_n, \alpha_n, \xi_n)+ P_2(u_n, \alpha_n, \xi_n)\  dt,\\
& \alpha_{n+1}(t,Y)=\alpha(0,Y) +\int_0^t 2u_n^3\cos^2\frac{\alpha_n}{2}-u_n\sin^2\frac{\alpha_n}{2}-2\cos^2\frac{\alpha_n}{2}(P_1+\partial_x P_2)(u_n, \alpha_n, \xi_n)\ dt,\\
& \xi_{n+1}(t,Y)=\xi(0,Y)+\int_0^t \xi[(2u_n^3+u_n)-2(P_1+\partial_x P_2)(u_n, \alpha_n, \xi_n)]\sin \alpha_n\ dt.
\end{cases}
\end{equation}
with initial data
\[
u(0,Y),\quad \alpha(0,Y)\quad \hbox{and}\quad\xi(0,Y)
\]
calculated by \eqref{apbid} and \eqref{aluxi}.
Here $n$ is the iteration index. The graphs of $u$ and $\alpha$ are given in Figures \ref{appb_3_0} and \ref{appb_3}, respectively.

The convergence of this algorithm can be proved by a similar method as the one used in \cite{CCL}. This is
one of the greatest advantages of our algorithm.

\begin{figure}[hb]
 \includegraphics[scale=0.5]{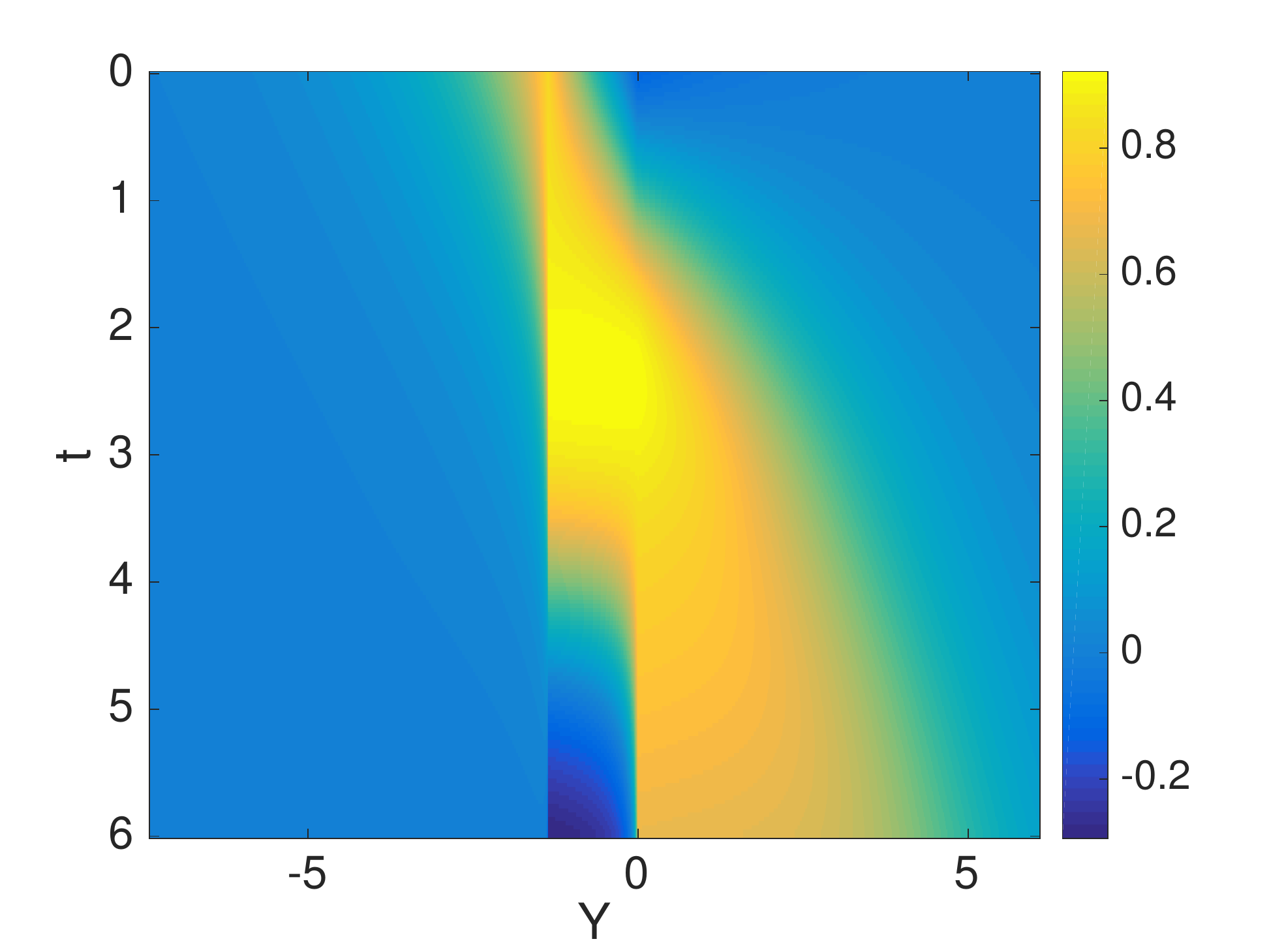}
  \caption{Solution $u(t,Y)$.}
  \label{appb_3_0}
\end{figure}

\begin{figure}[hb]
  \includegraphics[scale=0.5]{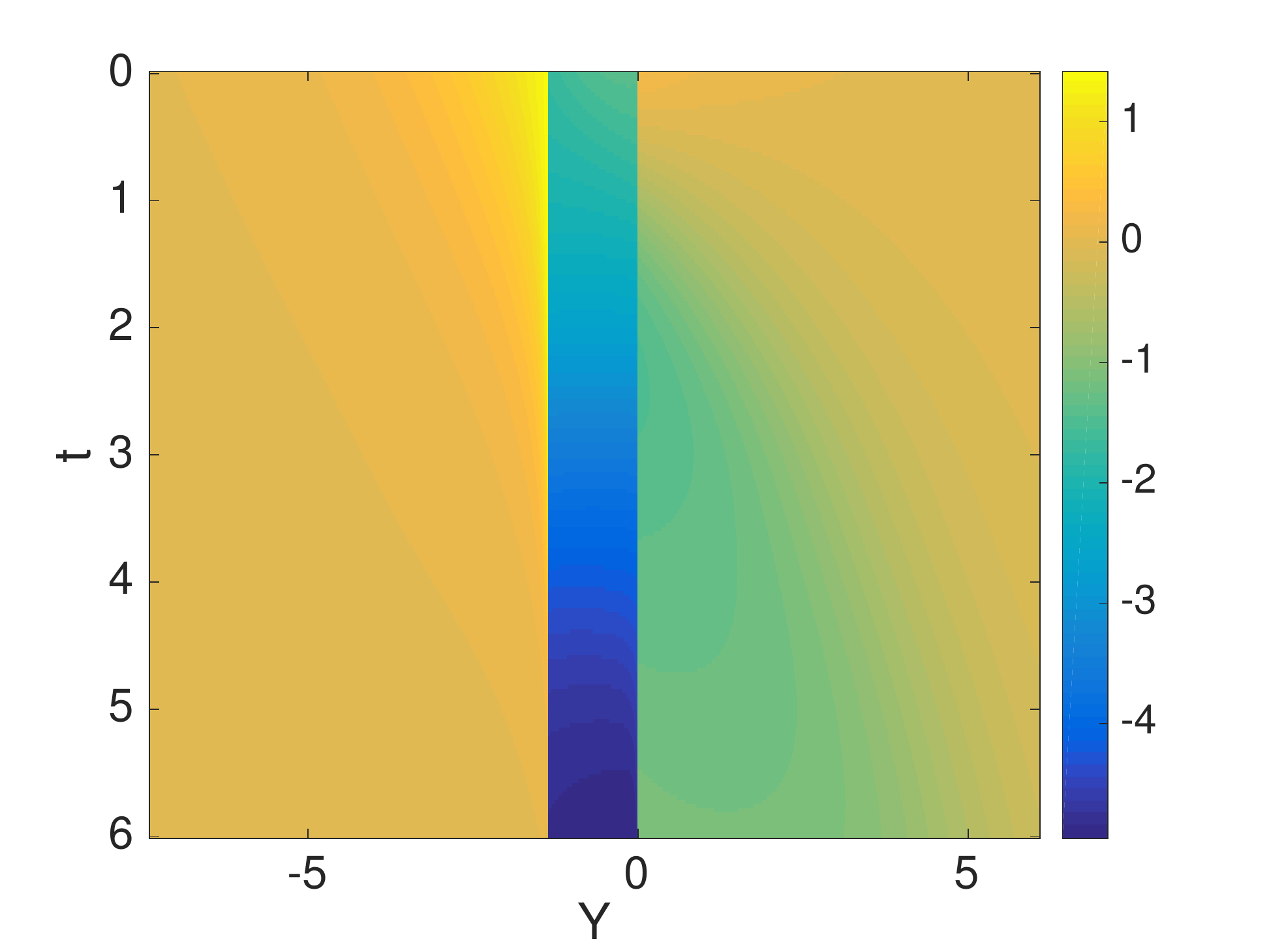}
  \caption{Function $\alpha(t,Y)$. When $\alpha=(2k+1)\pi$, with any integer $k$, $|u_x|$ blowup (when two peakons collide). In this example, $\alpha$ attains $-\pi$.}
  \label{appb_3}
\end{figure}

Then we study the energies $\mathcal{E}$ and $\mathcal{F}$, defined in \eqref{2.8}--\eqref{2.9}, between two characteristics $x=q_1(t)$ and $x=q_2(t)$ starting from the tips of two initial peakons, respectively.

We introduce several notations: First, we denote
\[
Y_1=Y(0, q_1(0)),\qquad Y_2=Y(0, q_2(0)),
\]
which are constant on two characteristics, respectively.
And we denote that $t=t_*$ is the time when two peakons first interact and 
\[
q_*:=q_1(t_*)=q_2(t_*).
\]

By \eqref{aluxi}, \eqref{2.8} and \eqref{2.9}, before the interaction of two peakons, we have
\begin{align*}
\mathcal{F}[q_1(t), q_2(t)]=&\int_{[q_1(t), q_2(t)]}  (u^4+2u^2u_x^2-\frac{1}{3}u_x^4)(t,x)\,dx\\
=&\int_{[Y_1, Y_2]} \LC u^4 \cos^4{\alpha\over2} + 2u^2 \cos^2{\alpha\over2} \sin^2{v\over2} - {1\over3} \sin^4{\alpha\over2} \RC \xi \ dY
\end{align*}
and
\[
\mathcal{L}[q_1(t), q_2(t)]:=\int_{[p_1(t), p_2(t)]}  u_x^4(t,x)\,dx=\int_{[Y_1, Y_2]}  (\sin^4{\alpha\over2})\, \xi \ dY.
\]

From Figure \ref{appb_4}, we see that $\mathcal{L}[q_1(t), q_2(t)]$ is always positive.
And when $t=t_*$,
\[
\mathcal{F}[ \{q_*\}]=\nu_{t_*}(\{q_*\})
=\int_{[Y_1, Y_2]} \LC u^4 \cos^4{\alpha\over2} + 2u^2 \cos^2{\alpha\over2} \sin^2{v\over2} - {1\over3} \sin^4{\alpha\over2} \RC \xi (t_*,Y)\ dY
\]
and
\[
\mathcal{L}[ \{q_*\}]=\int_{[Y_1, Y_2]}  (\sin^4{\alpha\over2}) \,\xi\, (t_*,Y)\ dY
\]
are also nonzero. This means that there exists fourth-order-energy concentration at the point $q_*$ when peakons interact in our example. This exactly shows the failure of natural Sobolev norm from energy law in studying the stability or Lipschitz continuous dependence of solutions, as shown in Figure \ref{concentration figure}.

Here, we note that by studying the solution on the $(t,Y)$-coordinates, we can avoid the difficulties caused by the blowup of $|u_x|$. One can clearly see that above integrals in the interval $[Y_1, Y_2]$ are ordinary integrals instead of improper ones.

\begin{figure}[tb]
 \includegraphics[scale=0.5]{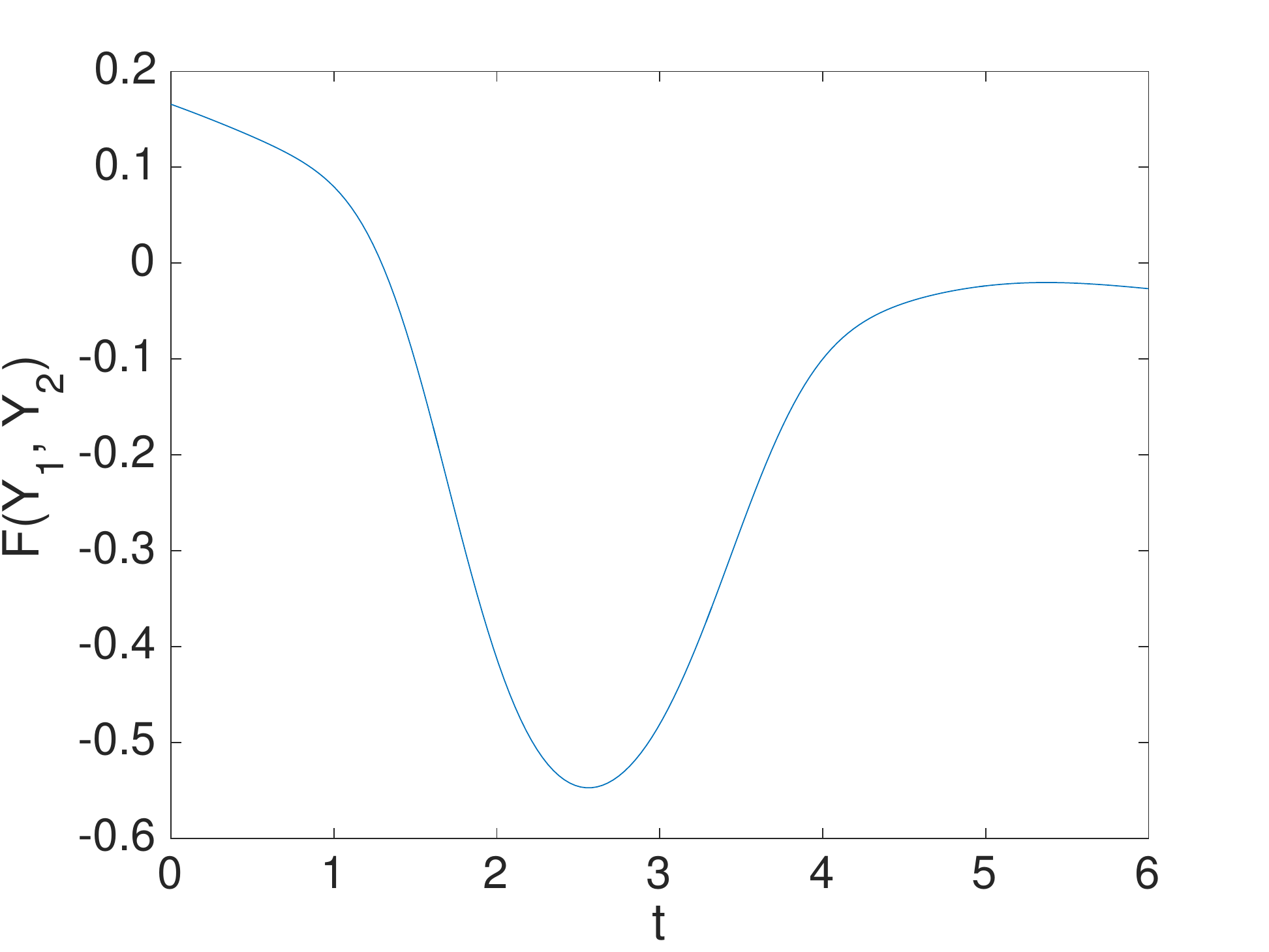}
  \includegraphics[scale=0.5]{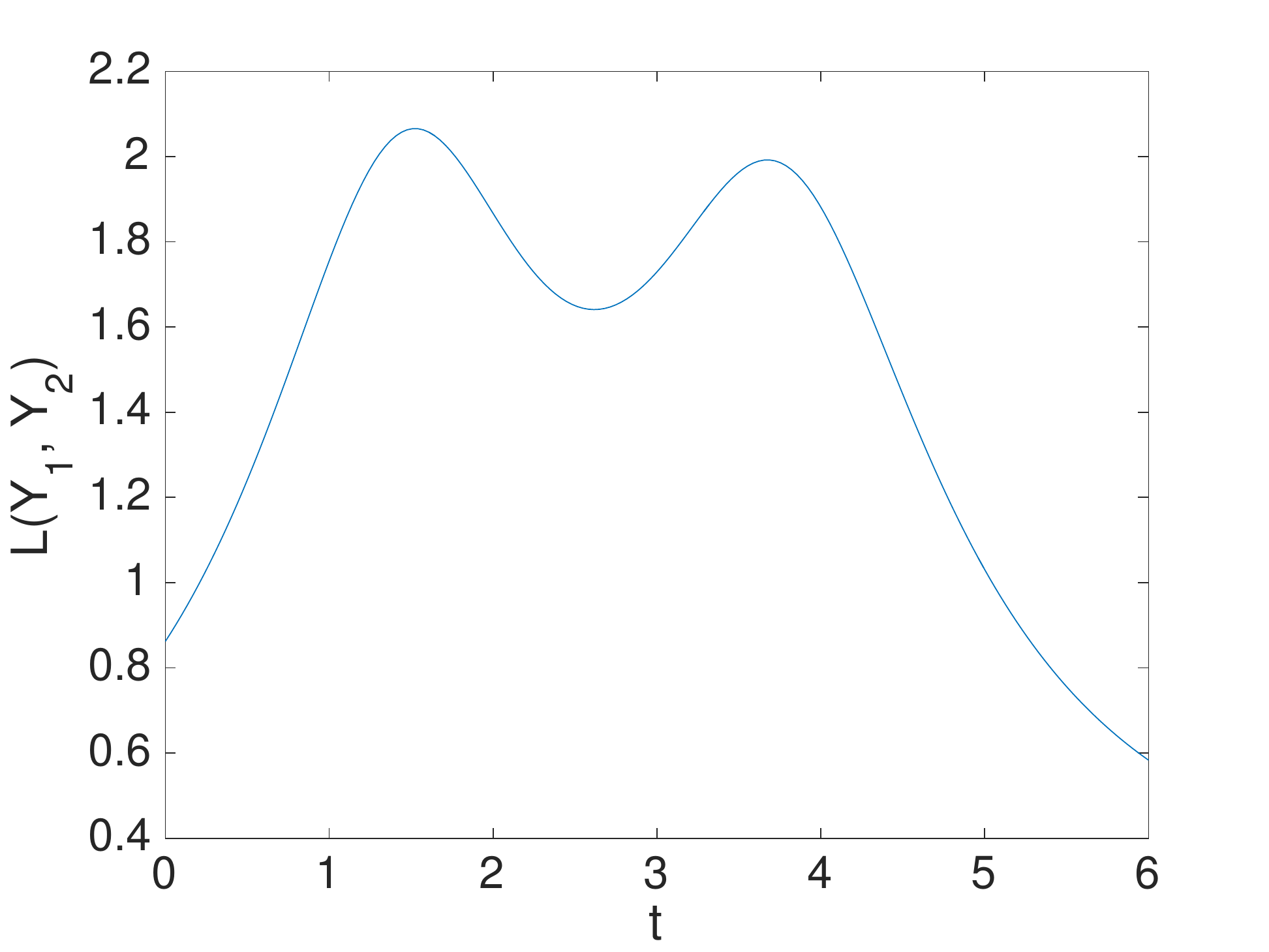}
  \caption{The functions $\mathcal{F}[q_1(t), q_2(t)]$ and $\mathcal{L}[q_1(t), q_2(t)]$.}
  \label{appb_4}
\end{figure}

Similarly, before the interaction of two peakons,
\[
\mathcal{E}[q_1(t), q_2(t)]=\int_{[q_1(t), q_2(t)]}  (u^2+u_x^2)(t,x)\,dx
=\int_{[Y_1, Y_2]} \LC u^2\cos^2{\alpha\over2} + \sin^2{\alpha\over2} \RC \xi \cos^2{\alpha\over2} \ dY.
\]
However, it is clear that when two peakons interact at time $t=t_*$, $\cos{\alpha\over2}=0$, hence,
\[
    \mathcal{E}\{q_*\}=
    \int_{[Y_1, Y_2]} \LC u^2\cos^2{\alpha\over2} + \sin^2{\alpha\over2} \RC \xi \cos^2{\alpha\over2} (t_*,Y)\ dY=0.
\]
This tells that there exists no second-order-energy  ($\mathcal{E}$) concentration for any weak solutions.

One can also find a picture of $u(t,x)$ on the original $(t,x)$-coordinates in Figure \ref{fig_uxt}.

\begin{figure}[tb]
 \includegraphics[scale=0.5]{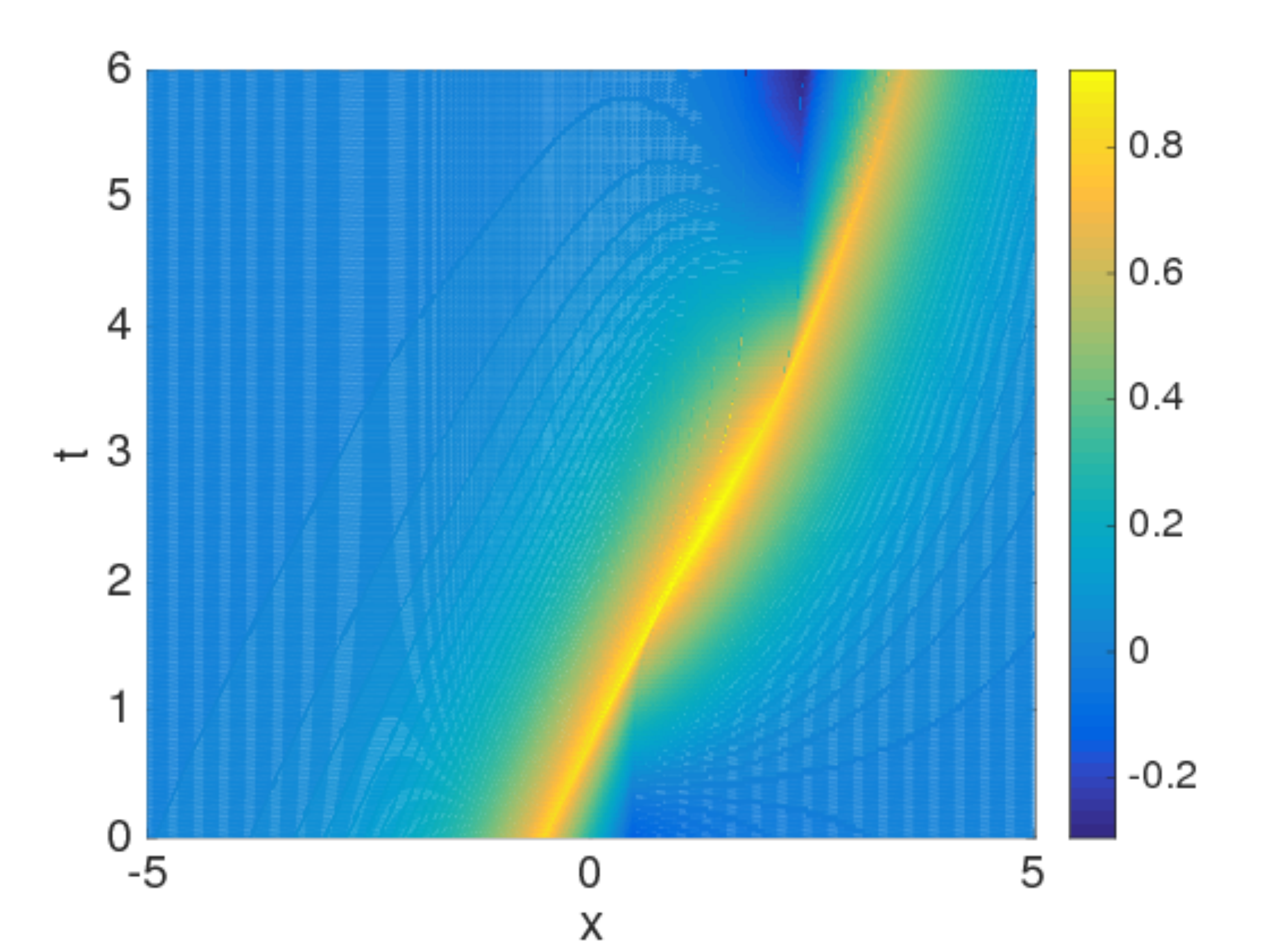}
  \caption{Solution $u(t,x)$.
  \label{fig_uxt}}
\end{figure}

\medskip

\noindent {\bf Acknowledgements.} The work of HC is partially supported by the National Natural Science Foundation of China-NSAF (No. 11271305, 11531010) and the China Scholarship Council No. 201506310110 as an exchange graduate student at Georgia Institute of Technology.
The work of RMC is partially supported by the Simons Foundation under Grant 354996.


\end{document}